\documentclass[11pt]{article}
\usepackage{latexsym,amssymb}
\usepackage{amsmath}
\usepackage[mathscr]{eucal}
\usepackage{xcolor}
\usepackage[abbrev]{amsrefs}
\usepackage[all]{xy}
\usepackage{mathtools}
\usepackage{soul}
\usepackage{enumitem}

\newtheorem{Theorem}{\bf Theorem}[section]
\newtheorem{Lemma}{\bf Lemma}[section]
\newtheorem{Proposition}{\bf Proposition}[section]
\newtheorem{Corollary}{\bf Corollary}[section]
\newtheorem{Remark}{\bf Remark}[section]
\newtheorem{Example}{\bf Example}[section]
\newtheorem{Definition}{\bf Definition}[section]

\newenvironment{theorem}{\begin{Theorem}$\!\!\!$}{\end{Theorem}}
\newenvironment{lemma}{\begin{Lemma}$\!\!\!$}{\end{Lemma}}
\newenvironment{proposition}{\begin{Proposition}$\!\!\!$}{\end{Proposition}}

\newenvironment{remark}{\begin{Remark}$\!\!\!$}{\end{Remark}}

\newenvironment{definition}{\begin{Definition}$\!\!\!$}{\end{Definition}}

\numberwithin{equation}{section}

\numberwithin{equation}{section}

\newcommand{\G}{\mathsf{G}}
\newcommand{\dee}{{\rm{d}}}
\newcommand{\R}{\mathbb{R}}

\newtagform{red}{\color{red}(}{)}
\newtagform{teal}{\color{teal}(}{)}
\newtagform{blue}{\color{blue}(}{)}
\newtagform{magenta}{\color{magenta}(}{)}

\def\XXint#1#2#3{{\setbox0=\hbox{$#1{#2#3}{\int}$}
\vcenter{\hbox{$#2#3$}}\kern-.5\wd0}}

\newcommand{\uHDD}[3]{u_{\rm HDD}^{#1,#2,#3}}
\newcommand{\uHD}[2]{u_{\rm HD}^{#1,#2}}
\newcommand{\uHDi}[2]{u_{{\rm H}_{{\rm D},#2}}^{#1}}
\newcommand{\uHhN}[1]{u_{\rm H_{N,0}}^{#1}}
\newcommand{\uHDP}[2]{u_{\rm H_{D,\Psi}}^{#1,#2}}
\newcommand{\uHDN}[2]{u_{\rm HDN}^{#1,#2}}
\newcommand{\uLDD}[2]{u_{\rm LDD}^{#1,#2}}
\newcommand{\uLD}[1]{u_{\rm LD}^{#1}}
\newcommand{\uLDi}[1]{u_{{\rm L}_{{\rm D},#1}}}
\newcommand{\uLDP}[1]{u_{{\rm L}_{{\rm D},\Psi}}^{#1}}
\newcommand{\GLDD}[2]{U^{#1,#2}}
\newcommand{\GHDN}[2]{V^{#1,#2}}
\newcommand{\HHDN}{\hat{H}}
\newcommand{\HHDiLH}{\tilde{H}}

\allowdisplaybreaks[4]

\begin{document}
\title{Fundamental solution and diffusion limits for the heat equation\\ 
in a half-space with a diffusive dynamical boundary condition}
\author{Kazuhiro Ishige, Sho Katayama, and Tatsuki Kawakami}
\date{}
\maketitle
\begin{abstract}
We derive an explicit representation of the fundamental solution to the heat equation in a half-space of ${\mathbb R}^N$ with a diffusive dynamical boundary condition, 
and establish sharp pointwise upper and lower bounds.
We also investigate qualitative properties of the associated solutions, including precise decay estimates.
Furthermore, we analyze the diffusion limits of solutions to the initial--boundary value problem, 
and reveal the role of the diffusive dynamical boundary condition in the behavior of solutions.
\end{abstract}
\vspace{40pt}
\noindent 
Addresses:

\smallskip
\noindent 
K. I.: Graduate School of Mathematical Sciences, The University of Tokyo,\\ 
3-8-1 Komaba, Meguro-ku, Tokyo 153-8914, Japan\\
\noindent 
E-mail: {\tt ishige@ms.u-tokyo.ac.jp}\\

\smallskip
\noindent 
S. K.: Department of Mathematics, Faculty of Science and Technology, Keio University,\\ 
3-14-1 Hiyoshi, Kohoku-ku, Yokohama, Kanagawa 223-8522, Japan\\
\noindent 
E-mail: {\tt sho-katayama@keio.jp}\\

\smallskip
\noindent 
{T. K.}: Applied Mathematics and Informatics Course,\\ 
Faculty of Advanced Science and Technology, Ryukoku University,\\
1-5 Yokotani, Seta Oe-cho, Otsu, Shiga 520-2194, Japan\\
\noindent 
E-mail: {\tt kawakami@math.ryukoku.ac.jp}\\

\newpage
\section{Introduction}
We consider the heat equation
in the half-space $\Omega={\mathbb R}^{N-1}\times(0,\infty)$
with a diffusive dynamical boundary condition:
\begin{equation}
\tag{HDD}
\label{eq:HDD}
	\left\{
	\begin{array}{ll}
	\epsilon\partial_t u-\Delta u=0\quad & \mbox{in}\quad \Omega\times(0,\infty),\vspace{3pt}\\
	\delta\partial_t u-k\Delta' u-\partial_{x_N}u=0\quad & \mbox{on}\quad\partial\Omega\times(0,\infty),\vspace{3pt}\\
	u=\phi\quad & \mbox{in}\quad\Omega\times\{0\},\vspace{3pt}\\
	u=\psi\quad & \mbox{on}\quad\partial\Omega\times\{0\},
	\end{array}
	\right.
\end{equation}
where $N\ge 2$, $\epsilon>0$, $\delta>0$, $k>0$, 
$\partial_t:=\partial/\partial t$, and $\partial_{x_N}:=\partial/\partial x_N$. 
Here $\Delta$ is the Laplace operator on ${\mathbb R}^N$, 
$\Delta'$ is the Laplace operator on $\partial\Omega$, 
and  
$(\phi,\psi)\in L^\infty(\Omega)\times L^\infty(\partial\Omega)$.

The diffusive dynamical boundary condition given in the second equation in~\eqref{eq:HDD}
arises when the boundary surface has a finite storage capacity,
supports tangential diffusion, and simultaneously exchanges flux with the bulk
(see, e.g., \cites{C75bk, CR90, E95, GM14, GLR23, LSWW} for further details on the physical and historical background).
For the case where $\Omega$ is a bounded smooth domain,
well-developed existence theories for problem~\eqref{eq:HDD} are available
(see, e.g., \cites{BDV01, E93, E95, GM14, Hintermann89} and the references therein).
However, these theories do not extend to unbounded domains and 
are not applicable to the half-space setting considered in this paper.

In this paper, we derive an explicit representation of the fundamental solution to problem~\eqref{eq:HDD}
and establish sharp pointwise upper and lower bounds.
To the best of our knowledge, no previous work has obtained such an explicit and detailed description
of the fundamental solution for problem~\eqref{eq:HDD} in the half-space setting.
These results yield precise decay estimates for solutions and further allow us to study
the diffusion limits with respect to the parameters $\epsilon$, $\delta$, and~$k$,
including optimal convergence rates.
\medskip

We introduce some notation. 
We identify $\partial\Omega$ with ${\mathbb R}^{N-1}$ whenever no confusion arises.
We denote by $\dee\sigma$ the surface measure on $\partial\Omega$.
For $x\in\overline{\Omega}$, we often write $x=(x',x_N)$
with $x'\in{\mathbb R}^{N-1}$ and $x_N\in[0,\infty)$. 
For $x\in\overline{\Omega}$ and $r>0$, we set 
\[
	B^+_r(x):=\{y\in\Omega\,:\, |x-y|<r\}, \quad
	B'_r(x):=\{y\in\partial\Omega\,:\, |x-y|<r\}.
\]
For $(f,g)\in L^r(\Omega)\times L^r(\partial\Omega)$ with $r\in[1,\infty]$, 
we define
\[
	\|(f,g)\|_{L^r(\Omega)\times L^r(\partial\Omega)}
	:={\begin{dcases}
	  \left(\|f\|_{L^r(\Omega)}^r+\|g\|_{L^r(\partial\Omega)}^r\right)^{1/r}
	  &\textrm{if}\quad r\in[1,\infty),\vspace{3pt}\\
      \max\left\{\|f\|_{L^\infty(\Omega)},\|g\|_{L^\infty(\partial\Omega)}\right\}
      &\textrm{if}\quad r=\infty.
	\end{dcases}}
\]
For any continuous function $f$ on $\overline{\Omega}$, 
we denote by $f|_{\partial\Omega}$ the restriction of $f$ to $\partial\Omega$.
For any $d=1,2,\dots$, we denote by $\Gamma_d=\Gamma_d(x,t)$ 
the function on ${\mathbb R}^d\times(0,\infty)$ defined by 
\begin{equation}
\label{eq:1.1}
	\Gamma_d(x,t) := (4\pi t)^{-\frac{d}{2}} \exp\left(-\frac{|x|^2}{4t}\right),
	\quad (x,t)\in {\mathbb R}^d\times(0,\infty),
\end{equation}
which is the fundamental solution to the heat equation in ${\mathbb R}^d$, that is, $\Gamma_d$ satisfies
\begin{equation}
\label{eq:1.2}
	\partial_t \Gamma_d = \Delta \Gamma_d \quad \text{in}\quad {\mathbb R}^d\times(0,\infty),
	\quad
	\Gamma_d(\cdot,0) = \delta_d \quad \text{in}\quad {\mathbb R}^d,
\end{equation}
where $\delta_d$ denotes the Dirac delta distribution in ${\mathbb R}^d$. 
We also denote by $G_0=G_0(x,y,t)$ the fundamental solution to the heat equation in $\Omega$ 
with the homogeneous Dirichlet boundary condition, defined by
\begin{equation}
\label{eq:1.3}
	\begin{split}
	G_0(x,y,t):= & \, \Gamma_N(x-y,t) - \Gamma_N(x-y^*,t)
	\\
	= & \, \Gamma_{N-1}(x'-y',t)\left(\Gamma_1(x_N-y_N,t) - \Gamma_1(x_N+y_N,t)\right)
	\end{split}
\end{equation}
for $(x,y,t)\in D:=\overline{\Omega}\times\overline{\Omega}\times(0,\infty)$,
where $y^*:=(y',-y_N)$ for $y=(y',y_N)\in\overline{\Omega}$. 
\medskip

Before considering problem~\eqref{eq:HDD}, 
we focus on the case $k=0$, that is,
the heat equation in the half-space $\Omega$ with a (non-diffusive) dynamical boundary condition:
\begin{equation}
\tag{HD}
\label{eq:HD}
	\left\{
	\begin{array}{ll}
	\epsilon\partial_t u-\Delta u=0\quad & \mbox{in}\quad {\Omega\times(0,\infty)},
	\vspace{3pt}\\
	\delta\partial_t u-\partial_{x_N}u=0\quad & \mbox{on}\quad{\partial\Omega\times(0,\infty)},
	\vspace{3pt}\\
	u=\phi\quad & \mbox{in}\quad\Omega\times\{0\},
	\vspace{3pt}\\
	u=\psi\quad & \mbox{on}\quad\partial\Omega\times\{0\},
	\end{array}
\right.
\end{equation}
where $(\phi,\psi)\in L^\infty(\Omega)\times L^\infty(\partial\Omega)$. 
In \cite{FIK21}, 
the first and third authors of the present paper, together with Fila, 
constructed a solution~$\uHD{\epsilon}{\delta}$ to problem~\eqref{eq:HD}
and proved that 
\begin{equation}
\label{eq:1.4}
	\lim_{\epsilon\to 0^+}\sup_{(x,t)\in\Omega_L\times I}
	|\uHD{\epsilon}{\delta}(x,t)-\uLD{\delta}(x,t)|=0
\end{equation}
for any $L>0$ and any compact set $I\subset(0,\infty)$, where 
\[
	\Omega_L:=\{x=(x',x_N)\in\overline{\Omega}\,:\,0\le x_N\le L\}.
\]
Here $\uLD{\delta}$ is a function on $\overline{\Omega}\times(0,\infty)$ 
defined by 
\begin{equation}
\label{eq:1.5}
	\uLD{\delta}(x,t)
	:=\int_{\partial\Omega}P\left(x'-y',x_N+\frac{t}{\delta}\right)\psi(y)\,\dee\sigma(y),
	\quad (x,t)\in \overline{\Omega}\times(0,\infty),
\end{equation}
where $P$ is the Poisson kernel on the half-space $\Omega$, given by
\begin{equation}
\label{eq:1.6}
	P(x)=P(x',x_N)=c_N x_N|x|^{-N},\quad x=(x',x_N)\in\Omega,
\end{equation}
with $c_N:=\pi^{-\frac{N}{2}}\Gamma(N/2)$.
The function $\uLD{\delta}$ corresponds to a solution to the Laplace equation
in the half-space $\Omega$ with a (non-diffusive) dynamical boundary condition:
\begin{equation}
\tag{LD}
\label{eq:LD}
	\left\{
	\begin{array}{ll}
	-\Delta u=0\quad & \mbox{in}\quad{\Omega\times(0,\infty)},
	\vspace{3pt}\\
	\delta\partial_t u-\partial_{x_N}u=0\quad & \mbox{on}\quad{\partial\Omega\times(0,\infty)},
	\vspace{3pt}\\
	u=\psi\quad & \mbox{on}\quad\partial\Omega\times\{0\}.
	\end{array}
	\right.
\end{equation}
Subsequently, relation~\eqref{eq:1.4} was improved in \cite{FIKL19} to
\begin{equation}
\label{eq:1.7}
	\sup_{(x,t)\in\Omega_L\times I}|\uHD{\epsilon}{\delta}(x,t)-\uLD{\delta}(x,t)|
	=O\left(\epsilon^{\frac{1}{2}}\right)\quad\mbox{as}\quad\epsilon\to 0^+
\end{equation}
for any $L>0$ and any compact set $I\subset(0,\infty)$. 
The convergence rate in \eqref{eq:1.7} is optimal.
(See \cites{FIK23, FIKL19, FIKL20} for related results.)
In \cites{FIK21, FIK23, FIKL19, FIKL20}, 
the solution $\uHD{\epsilon}{\delta}$ to problem \eqref{eq:HD} 
is expressed as the sum of a solution $v$ of an inhomogeneous heat equation 
with the homogeneous Dirichlet boundary condition
and a solution $w$ of the Laplace equation with an inhomogeneous dynamical boundary condition.
Consequently, even if the initial data $\phi$ and $\psi$ are nonnegative
and we may therefore expect the solution $\uHD{\epsilon}{\delta}$ to be nonnegative,
the signs of the solutions $v$ and $w$ may change, 
which makes it difficult to systematically study the limiting behavior of solutions to problem~\eqref{eq:HD}
under various diffusion regimes.
More recently, in \cite{IKK25}, 
the present authors derived an explicit representation of the fundamental solution to problem~\eqref{eq:HD} 
and established sharp pointwise lower and upper bounds, 
which in turn yield precise decay estimates for solutions to problem~\eqref{eq:HD}.
\medskip

In this paper, we extend the method of \cite{IKK25} to derive an explicit representation of the fundamental solution to problem~\eqref{eq:HDD} 
and obtain sharp pointwise bounds. We also establish precise decay estimates for solutions to problem~\eqref{eq:HDD}
and study their limiting behavior under various diffusion regimes, 
including optimal convergence rates. 
The study of diffusion limits of solutions to problem~\eqref{eq:HDD} reveal the role of 
the diffusive dynamical boundary condition in the behavior of solutions.
As a byproduct, we also derive explicit representations of fundamental solutions to
the following two problems:
\begin{itemize}
\item 
 	the Laplace equation in $\Omega$ with a diffusive dynamical boundary condition: 
	\begin{equation}
	\tag{LDD}
	\label{eq:LDD}
  		\left\{
  		\begin{array}{ll}
  		-\Delta u=0\quad & \mbox{in}\quad {\Omega\times(0,\infty)},
		\vspace{3pt}\\
  		\delta\partial_t u-k\Delta' u-\partial_{x_N}u=0\quad & \mbox{on}\quad{\partial\Omega\times(0,\infty)},
		\vspace{3pt}\\
  		u=\psi\quad & \mbox{on}\quad\partial\Omega\times\{0\};
  		\end{array}
  		\right.
	\end{equation}
\item 
	the heat equation in $\Omega$ with a diffusive Neumann boundary condition:
	\begin{equation}
	\tag{HDN}
	\label{eq:HDN}
  		\left\{
 		\begin{array}{ll}
  		\epsilon\partial_t u-\Delta u=0\quad & \mbox{in}\quad {\Omega\times(0,\infty)},
		\vspace{3pt}\\
  		-k\Delta' u-\partial_{x_N}u=0\quad & \mbox{on}\quad { \partial\Omega\times(0,\infty)},
		\vspace{3pt}\\
  		u=\phi\quad & \mbox{in}\quad\Omega\times\{0\}.
  		\end{array}
  		\right.
	\end{equation}
\end{itemize}
To the best of our knowledge, 
no results are available on explicit representations of fundamental solutions 
to problems~\eqref{eq:HDD}, \eqref{eq:LDD}, and \eqref{eq:HDN}.
\medskip

We now present the explicit representation of the fundamental solution to problem~\eqref{eq:HDD},
together with an integral representation of bounded classical solutions to problem~\eqref{eq:HDD}. 
In Theorem~\ref{Theorem:1.1}, we also treat problem~\eqref{eq:HD}, which corresponds to the case $k=0$.
A classical solution to problem~\eqref{eq:HDD} is a function $u$ 
defined on $\overline{\Omega}\times[0,\infty)$ such that
\[
	u\in C^{2;1}(\overline{\Omega}\times(0,\infty)) \cap C(\Omega\times[0,\infty)),
	\quad
	u|_{\partial\Omega}\in C(\partial\Omega\times[0,\infty)),
\]
and $u$ satisfies the equations in~\eqref{eq:HDD} pointwise.  
Note that we do not assume that $u\in C(\overline{\Omega}\times[0,\infty))$. 
Similarly, a classical solution to problem~\eqref{eq:HD} is a function $u$ 
defined on $\overline{\Omega}\times[0,\infty)$ such that
\[
	u\in {C^{2;1}(\Omega\times(0,\infty))\cap C^{1;1}(\overline{\Omega}\times(0,\infty))
   	\cap C(\Omega\times[0,\infty))},
	\quad
	u|_{\partial\Omega}\in C(\partial\Omega\times[0,\infty)),
\]
and $u$ satisfies the equations in~\eqref{eq:HD} pointwise.
\begin{theorem}
\label{Theorem:1.1}
Let $\epsilon>0$, $\delta>0$, and $k\ge0$.
Define
\begin{align}
\label{eq:1.8}
 	H(x,y,t)
 	& :=-2\int_0^t\Gamma_{N-1}\!\left(x'-y',\frac{t-\tau}{\epsilon}+\frac{k}{\delta}\tau\right)
 	\partial_{x_N}\Gamma_1\!\left(x_N+y_N+\frac{\tau}{\delta},
 	\frac{t-\tau}{\epsilon}\right)\,\dee\tau,
	\\
\label{eq:1.9}
 	G(x,y,t)
 	& :=G_0\!\left(x,y,\frac{t}{\epsilon}\right)+\frac{1}{\delta}H(x,y,t),
	\end{align}
for $(x,y,t)\in D=\overline{\Omega}\times\overline{\Omega}\times(0,\infty)$.
Then the following assertions hold.
\begin{enumerate}[label={\rm(\arabic*)}]
\item
$G(x,y,t)=G(y,x,t)>0$ for $(x,y,t)\in D$.

\item 
For any fixed $y\in\overline{\Omega}$, the function
$G=G(x,y,t)$ satisfies
\[
	\begin{array}{ll}
	\epsilon\partial_t G-\Delta_x G=0
	& \mbox{in}\quad \Omega\times(0,\infty),
	\vspace{5pt}
	\\
	\delta\partial_t G-k\Delta'_x G-\partial_{x_N}G=0
	& \mbox{on}\quad {\partial\Omega\times(0,\infty)},
	\end{array}
\]
with respect to the variables $(x,t)\in\overline{\Omega}\times(0,\infty)$.

\item
For any $(x,t)\in\overline{\Omega}\times(0,\infty)$,
\[
	\int_\Omega G(x,y,t)\,\dee y+\frac{\delta}{\epsilon}\int_{\partial\Omega}G(x,y,t)\,\dee\sigma(y)=1.
\]

\item 
For any $(x,y,t)\in D$ and $s\in(0,\infty)$,
\[
	G(x,y,t+s)
	=
	\int_\Omega G(x,z,t)G(z,y,s)\,\dee z
	+\frac{\delta}{\epsilon}\int_{\partial\Omega}G(x,z,t)G(z,y,s)\,\dee\sigma(z).
\]

\item
Let $(\phi,\psi)\in BC(\Omega)\times BC(\partial\Omega)$.
Define a function $u$ in $\overline{\Omega}\times(0,\infty)$ by
\begin{equation}
\label{eq:1.10}
	\begin{aligned}
	&
	u(x,t)
	\\
	& :=\int_\Omega G(x,y,t)\phi(y)\,\dee y
	+\frac{\delta}{\epsilon}\int_{\partial\Omega}G(x,y,t)\psi(y)\,\dee\sigma(y)
	\\
	& =\int_\Omega G_0\!\left(x,y,\frac{t}{\epsilon}\right)\phi(y)\,\dee y
	+\frac{1}{\delta}\int_\Omega H(x,y,t)\phi(y)\,\dee y
	+\frac{1}{\epsilon}\int_{\partial\Omega}H(x,y,t)\psi(y)\,\dee\sigma(y)
	\end{aligned}
\end{equation}
for $(x,t)\in \overline{\Omega}\times(0,\infty)$.
Then $u$ is the unique bounded classical solution to
problem~\eqref{eq:HDD}
{\rm({\it resp.}~problem~\eqref{eq:HD})}
when $k>0$
{\rm({\it resp.}~$k=0$)}.
In particular, formula~\eqref{eq:1.10} provides an explicit representation of
every bounded classical solution to problem~\eqref{eq:HDD}
{\rm({\it resp.}~problem~\eqref{eq:HD})}
when $k>0$
{\rm({\it resp.}~$k=0$)}.
\end{enumerate}
\end{theorem}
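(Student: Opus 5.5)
The plan is to verify the properties of the explicit kernel $G$ directly, using the Fourier transform in $x'$ and the Laplace/Fourier structure in $x_N$. Taking the Fourier transform in $x'$ turns $\Gamma_{N-1}(x'-y',\frac{t-\tau}{\epsilon}+\frac{k}{\delta}\tau)$ into $e^{-i\xi\cdot y'}e^{-|\xi|^2((t-\tau)/\epsilon+k\tau/\delta)}$. After this, \eqref{eq:HDD} reduces to a one-dimensional problem in $x_N>0$ with the equation $\epsilon\partial_t u-\partial_{x_N}^2u+|\xi|^2u=0$ and the dynamical Robin-type condition $\delta\partial_t u+k|\xi|^2u-\partial_{x_N}u=0$ at $x_N=0$.

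\textbf{Assertion (2).} In the interior this is immediate: $G_0(\cdot,y,t/\epsilon)$ solves $\epsilon\partial_t-\Delta=0$, and the integrand of $H$ is a product of heat kernels in the combined time variables. Differentiating under the integral, the $\tau$-dependence of the spatial arguments produces exactly the $\partial_\tau$ of the integrand. The boundary term at $\tau=t$ vanishes for $x_N+y_N>0$. The boundary condition is the key computation. At $x_N=0$ we use $\partial_{x_N}\Gamma_1(a+\tau/\delta,s)=\delta\,\partial_\tau[\Gamma_1(a+\tau/\delta,s)]$ at fixed $s$, and integrate by parts in $\tau$. The terms $\delta\partial_tH$, $k\Delta' H$ and $\partial_{x_N}H$ then combine into a total $\tau$-derivative. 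Its endpoint contributions cancel against $\partial_{x_N}G_0(x,y,t/\epsilon)|_{x_N=0}=-2\Gamma_{N-1}\partial_{x_N}\Gamma_1(y_N,t/\epsilon)$, which is the $\tau=0$ term. I expect to organize this as checking that $\delta\partial_t H-k\Delta'H-\partial_{x_N}H=-\delta\partial_{x_N}G_0$ on $\partial\Omega$, which is exactly what (2) needs.

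\textbf{Assertion (1).} Symmetry follows since $H$ depends on $x',y'$ through $x'-y'$ (even) and on $x_N,y_N$ through $x_N+y_N$, and $G_0$ is symmetric. For positivity, $-\partial_{x_N}\Gamma_1(a,s)=\frac{a}{2s}\Gamma_1(a,s)>0$ for $a>0$, so $H>0$. Since $G_0\ge0$, $G>0$ on $D$. On the boundary $x_N=y_N=0$ we have $G_0=0$ but $H>0$, so positivity still holds there.

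\textbf{Assertions (3) and (4).} For (3), I will integrate in $y'$ first, which removes $\Gamma_{N-1}$. I then compute $\int_0^\infty\!\!\int_0^t(\cdots)\,d\tau\,dy_N$ and $\frac{\delta}{\epsilon}\int(\cdots)$ at $y_N=0$ using $\int_0^\infty -2\partial_a\Gamma_1(a+y_N,s)dy_N=2\Gamma_1(a,s)$ together with the $\tau$-integration-by-parts identity above, and check that the total equals $1$. Alternatively, I will show that the left side $m(t)$ satisfies $m'(t)=0$ by (2) and Green's formula, since the boundary flux is exactly compensated by the weight $\delta/\epsilon$, and that $m(0^+)=1$. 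The semigroup property (4) will follow from uniqueness: both sides, as functions of $(x,s)$ with $y,t$ fixed and using symmetry, are bounded classical solutions with the same data at $s=0$. So (4) reduces to (5) applied to data $(G(\cdot,y,t),G(\cdot,y,t)|_{\partial\Omega})$, which are bounded and continuous for $t>0$.

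\textbf{Assertion (5).} For existence, (2) gives the equations after differentiating under the integral, justified by Gaussian bounds on $H$ and its derivatives. For the initial data, in the interior $G_0(\cdot,\cdot,t/\epsilon)$ is an approximate identity, and the $H$-terms tend to $0$ locally uniformly in $\Omega$, since they involve $\tau\le t$ and $x_N+y_N>0$. On the boundary, $\frac1\epsilon H(x,y,t)|_{x_N=y_N=0}$ is an approximate identity on $\partial\Omega$ by (3), while the $\phi$-terms contribute $o(1)$ on $\partial\Omega$. For uniqueness, I will take the difference $w$ of two bounded classical solutions with zero data and apply a Phragmén–Lindelöf-type maximum principle. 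I use the comparison function $\eta\bigl(|x|^2+2N t/\epsilon+Ct\bigr)+\eta'$, with $C$ chosen so that the boundary inequality $\delta\partial_t-k\Delta'-\partial_{x_N}\ge0$ holds; note $\partial_{x_N}|x|^2=0$ at $x_N=0$. Then let $\eta\to0$.

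The main obstacle is the maximum principle, because $w$ is only continuous up to $t=0$ separately in $\Omega$ and on $\partial\Omega$. A positive maximum of $w$ minus the barrier cannot occur at an interior point or at a boundary point for $t>0$: at boundary points $\partial_{x_N}w\le0$ and the tangential terms have the right sign. Near the corner $\partial\Omega\times\{0\}$, however, $w$ need not be continuous. I will handle this with the standard device of working on $[\tau_0,T]$, using that the boundary trace and interior values both tend to $0$ as $t\to0$. If this is too delicate, the fallback is a duality argument: pair $w$ against $G(\cdot,y,T-t)$ with the weighted measure and show the pairing is constant in $t$.
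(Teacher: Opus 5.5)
You follow the paper for (1)--(4) and for the existence half of (5). Write $H=\int_0^t f\,\dee\tau$. Each integrand solves $\epsilon\partial_t f-\Delta_x f=0$ and tends to $0$ as $\tau\to t^-$. This holds for all $x_N+y_N\ge0$ because of the shift $\tau/\delta$, which you need when $x,y\in\partial\Omega$. The pointwise identity $(\partial_t-\frac{k}{\delta}\Delta_x'-\frac1\delta\partial_{x_N})f=-\partial_\tau f$ then leaves only the endpoint term $f(x,y,t,0)=\partial_{x_N}G_0(x,y,t/\epsilon)$ on $\partial\Omega$.

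Three small corrections:
\begin{itemize}
\item The identity that (2) needs is $\delta\partial_tH-k\Delta_x'H-\partial_{x_N}H=+\delta\,\partial_{x_N}G_0(x,y,t/\epsilon)$ on $\partial\Omega$, not $-\delta\,\partial_{x_N}G_0$.
\item In the direct computation for (3), the time argument $(t-\tau)/\epsilon$ also moves with $\tau$, so the fixed-$s$ identity is the wrong tool. Use instead $\frac{\dee}{\dee\tau}\int_{x_N+\tau/\delta}^\infty\Gamma_1(\xi,\frac{t-\tau}{\epsilon})\,\dee\xi=-\frac1\delta\Gamma_1+\frac1\epsilon\partial_{x_N}\Gamma_1$, both evaluated at $(x_N+\tau/\delta,(t-\tau)/\epsilon)$. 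This gives $\frac1\delta\int_\Omega H\,\dee y+\frac1\epsilon\int_{\partial\Omega}H\,\dee\sigma(y)=2\int_{x_N}^\infty\Gamma_1(\xi,t/\epsilon)\,\dee\xi$ at once; the paper reaches the same formula by a substitution.
\item In (4), regard both sides as functions of $(x,t)$ with $y,s$ fixed. Then the right side is literally \eqref{eq:1.10} with data $G(\cdot,y,s)$, and no symmetry is needed.
\end{itemize}

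\textbf{The gap is in uniqueness.} Your barrier $\Phi=\eta(|x|^2+2Nt/\epsilon+Ct)+\eta'$ is a strict supersolution in $\Omega$ and on $\partial\Omega$ for large $C$, so the maximum principle holds on $\overline\Omega\times[\tau_0,T]$. But it only gives $\sup_{\overline\Omega\times[\tau_0,T]}(w-\Phi)\le\max\{0,\sup_{\overline\Omega}(w-\Phi)(\cdot,\tau_0)\}$. To conclude you would need $\limsup_{\tau_0\to0^+}\sup_{x\in\overline\Omega,\,|x|<\rho}w(x,\tau_0)\le0$. The definition of classical solution gives $w(\cdot,\tau_0)\to0$ locally uniformly on compact subsets of $\Omega$ and, separately, of $\partial\Omega$. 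It says nothing about points with $x_N$ of order $\tau_0^{1/2}$, where a boundary layer of size one is not excluded a priori. Shifting the initial time does not remove the corner difficulty; suprema are the wrong quantity here.

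\textbf{Your duality fallback does close the argument.} For fixed $(y,T)$ set $P(t):=\int_\Omega w(z,t)G(z,y,T-t)\,\dee z+\frac\delta\epsilon\int_{\partial\Omega}w(z,t)G(z,y,T-t)\,\dee\sigma(z)$.
\begin{itemize}
\item Using (2) for $G(\cdot,y,\cdot)$, Green's formula in $\Omega$ and integration by parts on $\mathbb{R}^{N-1}$, the normal-derivative terms cancel exactly because of the weight $\delta/\epsilon$. Hence $P$ is constant on $(0,T)$.
\item Since $\nabla w$ is not assumed bounded, insert a smooth radial cutoff $\zeta_R$ and move all derivatives onto $\zeta_RG$. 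The errors are then $\|w\|_\infty$ times annular integrals of $|\nabla\zeta_R||\nabla G|+G|\Delta\zeta_R|$ and their tangential analogues, which vanish as $R\to\infty$.
\item As $t\to T^-$, $P(t)\to w(y,T)$ by the approximate-identity property from your existence step, using $G(z,y,s)=G(y,z,s)$.
\item As $t\to0^+$, $P(t)\to0$ by dominated convergence. This needs only the separate pointwise convergence on $\Omega$ and on $\partial\Omega$, plus an integrable majorant of $G(\cdot,y,s)$ for $s\in[T/2,T]$.
\end{itemize}

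\textbf{Comparison with the paper.} The paper instead proves the T\"acklind-type Proposition~\ref{Proposition:2.1} by a weighted energy estimate. It multiplies by $u\zeta_R^2e^{-\lambda d_R(x)^2/(\tilde T-t)}$, and the dynamical boundary condition turns the boundary flux into $\frac\delta2\frac{\dee}{\dee t}$ of a weighted boundary $L^2$ norm plus a tangential term. At the initial time it needs only $\int_{B^+_{2R}(0)}u(\tau)^2\,\dee x\to0$ and $\int_{B'_{2R}(0)}u(\tau)^2\,\dee\sigma\to0$, again by dominated convergence. Both arguments beat the corner by working with integrated quantities. Yours is shorter once the kernel is available, but it depends on the kernel's decay and covers only bounded solutions. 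The energy method is kernel-free and gives uniqueness in the growth class \eqref{eq:2.14}.
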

\begin{remark}
\label{Remark:1.1}
In the case of problem~\eqref{eq:HD}, i.e., when $k=0$, 
assertions~{\rm (1)}--{\rm (5)} have already been proved in \cite{IKK25}*{Theorem~1.1}.
\end{remark}
In the proof of Theorem~\ref{Theorem:1.1}~(4)~and~(5), 
we establish a T\"acklind-type uniqueness theorem for problems~\eqref{eq:HDD} and~\eqref{eq:HD} 
(see Proposition~\ref{Proposition:2.1}).
This theorem guarantees the uniqueness of classical solutions with exponential growth at spatial infinity, 
in particular including bounded classical solutions, 
and it plays a crucial role in the proofs of assertions~(4) and~(5).
\medskip

Next, we establish upper and lower bounds for the integral kernel $H$ for the case $k>0$.
\begin{theorem}
\label{Theorem:1.2}
Let $\epsilon>0$, $\delta>0$, and $k>0$ be fixed.
Set
\[
	\Lambda:=\max\left\{\delta,k\epsilon\right\},\quad
	\lambda:=\min\left\{\delta,k\epsilon\right\}.
\]
Then there exists $C>0$ such that 
\begin{equation}
\label{eq:1.11}
	C^{-1}\underline{h}(x,y,t)\Gamma_{N-1}\left(x'-y',\frac{\lambda}{\epsilon\delta}t\right) 
	\le H(x,y,t)\le C \overline{h}(x,y,t)\Gamma_{N-1}\left(x'-y',\frac{\Lambda}{\epsilon\delta}t \right)
\end{equation}
for $(x,y,t)\in D$,
where
\[
	\begin{aligned}
 	& \overline{h}(x,y,t):=
	\left\{
	\begin{array}{ll}
	1,
	&\quad (x,y,t)\in D_1,
	\vspace{5pt}\\
	\displaystyle{\Gamma_1\left(x_N+y_N,\frac{t}{\epsilon}\right)},
	&\quad (x,y,t)\in D_2\cup D_4,
	\vspace{5pt}\\
 	\displaystyle{\left(x_N+y_N+\frac{t}{\delta}\right)\Gamma_1\left(x_N+y_N,\frac{t}{\epsilon}\right)},\,\,
	&\quad (x,y,t)\in D_3,\vspace{2pt}
	\end{array}
	\right.\\
 	& \underline{h}(x,y,t):=
	\left\{
	\begin{array}{ll}
	1,
	&\quad (x,y,t)\in D_1,
	\vspace{5pt}\\
	\displaystyle{\Gamma_1\left(x_N+y_N,\frac{t}{2\epsilon}\right)},
	&\quad (x,y,t)\in D_2\cup D_4,
	\vspace{5pt}\\
	\displaystyle{\left(x_N+y_N+\frac{t}{\delta}\right)\Gamma_1\left(x_N+y_N,\frac{t}{2\epsilon}\right)},
	&\quad (x,y,t)\in D_3.
	\end{array}
	\right.
	\end{aligned}
\]
Here
\[
	\begin{aligned}
	&
	D_1:=\left\{(x,y,t)\in D\,:\,\epsilon(x_N+y_N)^2<6 t,\,\,\,t<\frac{12\delta^2}{\epsilon}\right\},
	\\
	&
	D_2:=\left\{(x,y,t)\in D\,:\,\epsilon(x_N+y_N)^2<6 t,\,\,\,t\ge \frac{12\delta^2}{\epsilon}\right\},
	\\
	&
	D_3:=\left\{(x,y,t)\in D\,:\,\epsilon(x_N+y_N)^2\ge 6t,\,\,\, x_N+y_N+\frac{t}{\delta}
	<\frac{\delta}{\epsilon}\right\},
	\\
	&
	D_4:=\left\{(x,y,t)\in D\,:\,\epsilon(x_N+y_N)^2\ge 6t,\,\,\, x_N+y_N+\frac{t}{\delta}
	\ge \frac{\delta}{\epsilon}\right\}.
	\end{aligned}
\]
\end{theorem}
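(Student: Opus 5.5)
The plan is to estimate the integral \eqref{eq:1.8} directly, exploiting that the integrand is a product of an explicit tangential Gaussian and an explicit normal factor. Write $a:=x_N+y_N\ge0$ and $s(\tau):=(t-\tau)/\epsilon$. Since
\[
-2\partial_{x_N}\Gamma_1(\xi,s)=\frac{\xi}{s}\Gamma_1(\xi,s),
\]
we have
\[
H(x,y,t)=\int_0^t \Gamma_{N-1}\Bigl(x'-y',\tfrac{t-\tau}{\epsilon}+\tfrac{k}{\delta}\tau\Bigr)\,
\frac{a+\tau/\delta}{(t-\tau)/\epsilon}\,\Gamma_1\Bigl(a+\tfrac{\tau}{\delta},\tfrac{t-\tau}{\epsilon}\Bigr)\,\dee\tau ,
\]
and the integrand is positive.

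The first step separates the tangential factor. The variance is $T(\tau):=\frac{t-\tau}{\epsilon}+\frac{k}{\delta}\tau=\frac{1}{\epsilon\delta}\bigl(\delta(t-\tau)+k\epsilon\tau\bigr)$, a convex combination. Hence $\frac{\lambda}{\epsilon\delta}t\le T(\tau)\le\frac{\Lambda}{\epsilon\delta}t$, with ratio $\Lambda/\lambda$ bounded by a constant. For $T_1\le T\le T_2$ with $T_2/T_1\le C$ we have
\[
\Gamma_{N-1}(z,T)\le (T_2/T_1)^{\frac{N-1}{2}}\Gamma_{N-1}(z,T_2),\qquad
\Gamma_{N-1}(z,T)\ge (T_1/T_2)^{\frac{N-1}{2}}\Gamma_{N-1}(z,T_1).
\]
So $\Gamma_{N-1}(x'-y',T(\tau))$ is bounded above by $C\Gamma_{N-1}(x'-y',\Lambda t/(\epsilon\delta))$ and below by $C^{-1}\Gamma_{N-1}(x'-y',\lambda t/(\epsilon\delta))$, uniformly in $\tau$. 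This reduces \eqref{eq:1.11} to a one-dimensional two-sided estimate
\[
J(a,t):=\int_0^t \frac{\epsilon(a+\tau/\delta)}{t-\tau}\Gamma_1\Bigl(a+\tfrac{\tau}{\delta},\tfrac{t-\tau}{\epsilon}\Bigr)\dee\tau\ \asymp\ \overline{h}\ \text{(upper)},\ \underline{h}\ \text{(lower)}.
\]
Alternatively, $J$ is exactly $H$ for $N=1$, i.e. the one-dimensional ($k$-independent) kernel from \cite{IKK25}. If that paper's bounds are available in this form, I would simply invoke them. Otherwise I would prove them as follows.

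The second step is the one-dimensional estimate. Substitute $\sigma=t-\tau$, so that $\xi=a+(t-\sigma)/\delta$. The Gaussian exponent is
\[
-\frac{\epsilon\,\bigl(a+(t-\sigma)/\delta\bigr)^2}{4\sigma}.
\]
Because $a+(t-\sigma)/\delta\ge a$ and $\sigma\le t$, the integrand is at most
\[
\frac{C\epsilon^{1/2}\xi}{\sigma^{3/2}}\exp\Bigl(-\frac{\epsilon\xi^2}{4\sigma}\Bigr).
\]
Splitting the exponent in half gives a factor $\exp(-\epsilon a^2/(8t))$, which is comparable to $\Gamma_1(a,t/\epsilon)$ up to prefactors. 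I would then treat the four regions separately.

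In $D_1$ and $D_2$ we have $\epsilon a^2<6t$, so the exponential factor in $a$ is harmless. In $D_1$ ($t\lesssim\delta^2/\epsilon$), the mass near $\sigma\approx0$ behaves like $\int \xi\sigma^{-3/2}e^{-\epsilon\xi^2/4\sigma}\dee\sigma$. This integral is of order one, since it is the total flux of a boundary-hitting density. That gives $H\asymp1$. In $D_2$ (large time), the drift $\tau/\delta$ dominates. The integral is then governed by $\sigma\sim t$ and $\xi\sim\sqrt{t/\epsilon}$, which produces the factor $(\epsilon/t)^{1/2}\asymp\Gamma_1(a,t/\epsilon)$ (recall $\epsilon a^2\lesssim t$). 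In $D_3$ and $D_4$ ($\epsilon a^2\ge6t$), the Gaussian decay in $a$ is genuine. There I evaluate by Laplace's method near the endpoint $\sigma=t$, where $\xi$ is smallest. In $D_3$ the relevant $\xi\approx a+t/\delta$ stays small relative to $\delta/\epsilon$, giving the prefactor $(a+t/\delta)\Gamma_1(a,t/\epsilon)$. In $D_4$ the drift term $\epsilon\xi\sigma/(\delta\cdot\sigma)$ in the exponent's derivative localizes the integral to a layer of width $\sim\delta\sigma/(\epsilon\xi)$. This cancels the factor $\xi$ and leaves $\Gamma_1(a,t/\epsilon)$.

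For the lower bounds I restrict the integral to these dominant windows, e.g. $\sigma\in[t/2,t]$ or a layer near $\sigma=t$. On such a window $\epsilon\xi^2/(4\sigma)\le\epsilon\xi^2/(2t)$, which explains why $\Gamma_1(a,t/(2\epsilon))$ appears in $\underline{h}$. The cross term $2a(t-\sigma)/\delta$ in $\xi^2$ is absorbed using the thresholds $t<12\delta^2/\epsilon$ or $a+t/\delta<\delta/\epsilon$.

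The main obstacle is this one-dimensional case analysis, particularly in $D_4$ and at the boundaries between regions. The thresholds $6t$ and $\delta/\epsilon$ must make the window estimates uniform, with constants independent of $a$ and $t$. I would first check the scaling $a\mapsto(\epsilon/\delta)a$, $t\mapsto(\epsilon/\delta^2)t$, which reduces everything to $\epsilon=\delta=1$, and then do the elementary Laplace-type bounds there.
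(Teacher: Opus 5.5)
\textbf{Overall.} Your Step 1 is exactly the paper's reduction \eqref{eq:3.12}. The convex-combination bound on the tangential variance reduces everything to the one-dimensional integral $I=\int_0^t g\,\dee\tau$ (or a sub-window of it for the lower bound). Your lower-bound window $\sigma\in[t/2,t]$, with $\epsilon\xi^2/(4\sigma)\le\epsilon\xi^2/(2t)$, is also what the paper uses in $D_2$--$D_4$. The gap is that the one-dimensional two-sided estimate, which is the whole content of the theorem, is only described, and several of the descriptions fail as stated.

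\textbf{Where the sketch breaks.}
(i) $e^{-\epsilon a^2/(8t)}$ is not comparable to $\Gamma_1(a,t/\epsilon)$ ``up to prefactors'' in $D_3\cup D_4$. There the ratio $e^{\epsilon a^2/(8t)}$ is unbounded, so halving the exponent only gives $\Gamma_1(a,2t/\epsilon)$-type upper bounds, strictly weaker than $\overline{h}$.
(ii) Laplace's method is asymptotic, but $D_3\cup D_4$ contains the layer $\epsilon a^2/t\approx 6$ (and $a+t/\delta\approx\delta/\epsilon$), where there is no large parameter. You need inequalities with uniform constants, and the proposal does not supply them.
(iii) Your lower-bound recipe (windows $\sigma\in[t/2,t]$ or near $\sigma=t$) does not work in $D_1$. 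At $a=0$ that window contributes only $O(t^{1/2})$ as $t\to0^+$, while $\underline{h}=1$. The mass sits at $\sigma\in(0,t/2)$, where $\epsilon\xi^2/(4\sigma)\le\epsilon\xi^2/(2t)$ is false. There you need the completed square $\frac{\epsilon\xi^2}{4\sigma}=\frac{\epsilon z_N^2}{4\sigma}-\frac{\epsilon z_N}{2\delta}+\frac{\epsilon\sigma}{4\delta^2}$ with $z_N=a+t/\delta$. You also need $\epsilon z_N^2\le Ct$, $z_N\le C$ and $t<12\delta^2/\epsilon$ from the definition of $D_1$; this is the paper's Step~1. Your ``total flux'' heuristic points this way but does not control the $\sigma$-dependence of $\xi$.

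\textbf{The missing idea.} It is the one that produces the constant $6$ in the $D_j$. For fixed $\tau$, $g$ depends on $s=t-\tau$ through $s^{-3/2}e^{-\epsilon\xi^2/(4s)}$. This increases for $s<\epsilon\xi^2/6$ and decreases afterwards, so $t-\tau$ can be frozen at a single value.
\begin{itemize}
\item In $D_3\cup D_4$ you have $t-\tau\le t\le\epsilon a^2/6\le\epsilon\xi^2/6$, so $g\le-2\delta\,\partial_\tau\Gamma_1(a+\tau/\delta,t/\epsilon)$. This integrates exactly to $2\delta\bigl(1-e^{-\epsilon(2\delta a+t)/(4\delta^2)}\bigr)\Gamma_1(a,t/\epsilon)$. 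The $D_3/D_4$ dichotomy is then just $1-e^{-s}\asymp\min\{s,1\}$. The lower bounds follow the same way, from integrating $-\partial_\tau\Gamma_1(a+\tau/\delta,t/(2\epsilon))$ over $(0,t/2)$.
\item In $D_2$ you freeze at $t-\tau_*$, where $t-\tau_*=\epsilon(a+\tau_*/\delta)^2/6$. The condition $t\ge12\delta^2/\epsilon$ is exactly what gives $\tau_*\le t/2$, hence $\Gamma_1(a,(t-\tau_*)/\epsilon)\le\sqrt2\,\Gamma_1(a,t/\epsilon)$.
\end{itemize}

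\textbf{Keeping your endpoint picture.} It can be made rigorous instead.
\begin{itemize}
\item In $D_3\cup D_4$, $\epsilon\xi^2/\sigma\ge 6$ gives $\partial_\tau\log g\le-\epsilon\xi/(3\delta\sigma)\le-\epsilon a/(3\delta t)$. Hence $\int_0^t g\,\dee\tau\le\min\{\epsilon a,3\delta\}\,\Gamma_1(a,t/\epsilon)$, which covers both $D_3$ and $D_4$.
\item In $D_2$, bound the part $\tau<t/2$ directly by $C\delta\,\Gamma_1(a,t/\epsilon)$. For $\tau>t/2$ use $\sup_s g\le C\xi^{-2}\le C\delta^2t^{-2}$, which integrates to $C\delta^2/t\le C\,\Gamma_1(a,t/\epsilon)$ because $t\ge 12\delta^2/\epsilon$ and $\epsilon a^2<6t$.
\end{itemize}
Some such uniform argument is what your proposal still lacks.
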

\begin{remark}
\label{Remark:1.2}
Let $k=0$. 
Then $H(x,y,t)$ satisfies the same lower and upper bounds as in Theorem~{\rm\ref{Theorem:1.2}}
for $(x,y,t)\in D_2\cup D_3\cup D_4$, up to a time scaling.
In contrast, the behavior is different in the region $D_1$.
Indeed, there exists $C>0$ such that
\[
	C^{-1}P\left(x-y^*+\frac{t}{\delta}e_N\right)\le H(x,y,t)
	\le CP\left(x-y^*+\frac{t}{\delta}e_N\right),\quad (x,y,t)\in D_1
\]
{\rm ({\it see} \cite{IKK25}*{Theorem~1.2})}. 
In other words, 
the effect of the surface diffusion term $k\Delta'$ explicitly appears in the behavior of $G$ 
in the region $D_1$. 
\end{remark}

Next, motivated by Theorem~\ref{Theorem:1.1}~(5),
we define a solution to problems~\eqref{eq:HDD} and \eqref{eq:HD} 
with initial data $(\phi,\psi)\in L^p(\Omega)\times L^p(\partial\Omega)$, 
where $p\in[1,\infty]$.
\begin{definition}
\label{Definition:1.1}
Let $(\phi,\psi)\in L^p(\Omega)\times L^p(\partial\Omega)$, where $p\in[1,\infty]$.  
A function $u$ in $\overline{\Omega}\times(0,\infty)$
is called a solution to problem~\eqref{eq:HDD}
{\rm ({\it resp.~problem}~\eqref{eq:HD})} if $u$ satisfies
\begin{equation}
\label{eq:1.12}
	\begin{aligned}
	& u(x,t)=[\G(t)(\phi,\psi)](x)
	\\
	& :=\int_\Omega G(x,y,t)\phi(y)\,\dee y
	+\frac{\delta}{\epsilon}\int_{\partial\Omega} G(x,y,t)\psi(y)\,\dee \sigma(y)
	\\
 	& =\int_\Omega G_0\left(x,y,\frac{t}{\epsilon}\right)\phi(y)\,\dee y
   	+\frac{1}{\delta}\int_\Omega H(x,y,t)\phi(y)\,\dee y
   	+\frac{1}{\epsilon}\int_{\partial\Omega} H(x,y,t)\psi(y)\,\dee\sigma(y)
	\end{aligned}
\end{equation}
for all $(x,t)\in \overline{\Omega}\times(0,\infty)$,
where $k>0$ {\rm ({\it resp.} $k=0$)}.
\end{definition}
When needed, we write $u=\uHDD{\epsilon}{\delta}{k}$ when $k>0$ (resp.~$u=\uHD{\epsilon}{\delta}$ when $k=0$)
to indicate that $u$ is a solution to problem~\eqref{eq:HDD} (resp.~problem~\eqref{eq:HD}).
\medskip

As an application of Theorem~\ref{Theorem:1.2},
we obtain several qualitative properties of $\G(t)(\phi,\psi)$. 
In particular, in Theorem~\ref{Theorem:1.3}~(1) and~(2), we discuss the behavior of $\G(t)(\phi,\psi)$ as $t\to 0^+$,
while in Theorem~\ref{Theorem:1.3}~(3) we establish sharp decay estimates for $\G(t)$ in Lebesgue spaces.
Furthermore, in Theorem~\ref{Theorem:1.3}~(5), 
we obtain the diffusion limit of solutions to problem~\eqref{eq:HDD} as $k\to 0^+$.
\begin{theorem}
\label{Theorem:1.3}
Let $\epsilon>0$, $\delta>0$, $k\ge 0$, and $p\in[1,\infty]$. 
\begin{enumerate}[label={\rm(\arabic*)}]
\item 
	Let $u(t):=\G(t)(\phi,\psi)$, where $(\phi,\psi)\in L^p(\Omega)\times L^p(\partial\Omega)$. 
	Then $u$ is bounded and smooth on $\overline{\Omega}\times[T,\infty)$ for any $T>0$,
	and $u$ satisfies
	\begin{equation}
	\label{eq:1.13}
		\left\{
		\begin{array}{ll}
		\epsilon\partial_t u-\Delta u=0 
		& \mbox{in}\quad \Omega\times(0,\infty), \vspace{3pt}\\
		\delta\partial_t u-k\Delta' u-\partial_{x_N}u=0 
		& \mbox{on}\quad \partial\Omega\times(0,\infty),
		\end{array}
		\right.
	\end{equation}
	pointwise.
	Furthermore, 
	\begin{equation}
	\label{eq:1.14}
	\begin{aligned}
	 & \lim_{t\to 0^+}
	\|(u(t),u(t)|_{\partial\Omega})-(\phi,\psi)\|_{L^p(\Omega)\times L^p(\partial\Omega)}=0\quad\mbox{if }1\le p<\infty,\\
	 & \lim_{t\to 0^+}u(x,t)=\phi(x)\quad\mbox{for } x\in\Omega\,\,\,\mbox{if}\,\,\,\phi\in BC(\Omega).
	\end{aligned}
	\end{equation}
\item  
	Let $(\phi,\psi)\in L^p(\Omega)\times BC(\partial\Omega)$. 
	Then 	
	\begin{equation}
	\label{eq:1.15}
  	\lim_{t\to 0^+}u(x,t)=\psi(x)
	\quad\textrm{for }x\in\partial\Omega
	\end{equation}
	if
	\begin{equation}
	\label{eq:1.16}
	\int_0^1y_N^{-\frac{N-1}{p}}
	\|\phi(\cdot,y_N)\|_{L^p(\mathbb R^{N-1})}\,\dee y_N<\infty.
	\end{equation}
\item
  	Let $1\le p\le q\le\infty$. Define
	\[
	\begin{aligned}
		\|\G(t)\|_{p\to q} 
		& :=\sup\biggr\{\frac{\|(\G(t)(\phi,\psi),\G(t)(\phi,\psi)|_{\partial\Omega})\|_{L^q(\Omega)\times
	 	L^q(\partial\Omega)}}{\|(\phi,\psi)\|_{L^p(\Omega)\times L^p(\partial\Omega)}}
		\\
		 & \qquad\qquad\quad
		 \,:\,
		(\phi,\psi)\in L^p(\Omega)\times L^p(\partial\Omega),\,\,(\phi,\psi)\not=(0,0)\biggr\}.
	\end{aligned}
	\]
	Then there exists $C>0$ such that,
	for the case $k>0$,
	\[
		C^{-1}t^{-\frac{N}{2}\left(\frac{1}{p}-\frac{1}{q}\right)}
 		\le \|\G(t)\|_{p\to q}\le Ct^{-\frac{N}{2}\left(\frac{1}{p}-\frac{1}{q}\right)},\quad t>0, 
	\]
	and, for the case $k=0$,
	\begin{align*}
		C^{-1}\left(t^{-(N-1)\left(\frac{1}{p}-\frac{1}{q}\right)}
		+t^{-\frac{N}{2}\left(\frac{1}{p}-\frac{1}{q}\right)}\right)
 		 & \le \|\G(t)\|_{p\to q}\\
		 & \le C\left(t^{-(N-1)\left(\frac{1}{p}-\frac{1}{q}\right)}
		+t^{-\frac{N}{2}\left(\frac{1}{p}-\frac{1}{q}\right)}\right),\quad t>0.
	\end{align*}
  	In particular, $\|\G(t)\|_{p\to p}=1$ for $t>0$. 
\item
  	Let $(\phi,\psi)\in L^p(\Omega)\times L^p(\partial\Omega)$. 
  	Then 
 	\[
  		\G(t)\left(\G(s)(\phi,\psi),\G(s)(\phi,\psi)|_{\partial\Omega}\right)=\G(t+s)(\phi,\psi)
  	\] 
	for $t,s\in(0,\infty)$. 
	More precisely, 
  	\[
  		[\G(t+s)(\phi,\psi)](x)=\int_\Omega G(x,y,t)[\G(s)(\phi,\psi)](y)\,\dee y
  		+\frac{\delta}{\epsilon}\int_{\partial\Omega}G(x,y,t)[\G(s)(\phi,\psi)](y)\,\dee\sigma(y)
  	\]
  	for $x\in\overline{\Omega}$ and $t, s\in(0,\infty)$.
\item
  	For any $T>0$ and $R>0$,   
  	$$
  		\left|\uHDD{\epsilon}{\delta}{k}(x,t)-\uHD{\epsilon}{\delta}(x,t)\right|
		=O(k)\quad\mbox{as}\quad k\to 0^+
  	$$
  	uniformly for $(x,t)\in \overline{\Omega}\times(0,T]$ with $x_N+t/\delta\ge R$.
\end{enumerate}
\end{theorem}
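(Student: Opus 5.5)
We prove the five assertions of Theorem~\ref{Theorem:1.3} in the order (4), (1), (2), (3), (5), using Theorem~\ref{Theorem:1.1}, Theorem~\ref{Theorem:1.2}, and, for $k=0$, the bounds of Remark~\ref{Remark:1.2} (that is, \cite{IKK25}*{Theorem~1.2}).

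\emph{Assertion~(4).} This follows from the semigroup identity in Theorem~\ref{Theorem:1.1}~(4) and Fubini's theorem. Fubini is justified since $G>0$, and the integrals are finite by the $L^p$ bounds proved below; alternatively one may argue first for nonnegative data and then split into positive and negative parts.

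\emph{Assertion~(1).} Smoothness and the pointwise equations \eqref{eq:1.13} for $t\ge T$ come from differentiating under the integral sign in \eqref{eq:1.12}. Derivatives of $G_0(\cdot,\cdot,t/\epsilon)$ and of the integrand in \eqref{eq:1.8} are bounded by Gaussian-type kernels that are integrable uniformly in $y$ once $t\ge T$. The equations then follow from Theorem~\ref{Theorem:1.1}~(2).

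For the initial trace \eqref{eq:1.14}, Theorem~\ref{Theorem:1.1}~(3) shows that $\G(t)$ is a positive, mass-preserving (Markov-type) operator. By symmetry of $G$ and Jensen's inequality it is a contraction on $L^p(\Omega)\times L^p(\partial\Omega)$ with respect to the measure $\dee y+(\delta/\epsilon)\dee\sigma$; this norm is equivalent to the one used in the statement. Hence it suffices to prove convergence for $(\phi,\psi)\in C_c$, and then conclude by density.
\begin{itemize}
\item For compactly supported continuous data, the $G_0$ part converges by the classical Dirichlet heat kernel argument.
\item The terms $\delta^{-1}\int H\phi$ and $\epsilon^{-1}\int_{\partial\Omega}H\psi$ are handled with the upper bound \eqref{eq:1.11}. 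For $t$ small we are in $D_1$ near the boundary, where $H\lesssim\Gamma_{N-1}(x'-y',\Lambda t/(\epsilon\delta))$. Combined with the identity $\int_0^\infty \overline{h}\,\dee y_N\to0$ in the relevant layers, this shows that $\delta^{-1}\int_\Omega H\phi\,\dee y\to0$ in $L^p(\Omega)$. Meanwhile $\epsilon^{-1}\int_{\partial\Omega} H\psi$ restricted to $\partial\Omega$ behaves like an $(N-1)$-dimensional approximate identity of total mass tending to $\epsilon/\delta$ times the correct factor. This mass is read off from Theorem~\ref{Theorem:1.1}~(3) with $\phi\equiv0$.
\end{itemize}
The pointwise statement for $\phi\in BC(\Omega)$ at interior $x$ follows because $H(x,\cdot,t)\to0$ locally uniformly when $x_N>0$: one uses the region $D_3$/$D_4$ bounds with $\Gamma_1(x_N+y_N,t/\epsilon)\to0$.

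\emph{Assertion~(2).} For $x\in\partial\Omega$, the $\psi$-term converges to $\psi(x)$ by the approximate-identity argument above, using the continuity of $\psi$. The $G_0$-term vanishes because $x_N=0$. It remains to show $\delta^{-1}\int_\Omega H(x,y,t)\phi(y)\,\dee y\to0$. We bound $H$ by $\overline{h}\,\Gamma_{N-1}$ and apply Hölder's inequality in $y'$ for each fixed $y_N$, which gives $\|\Gamma_{N-1}(\cdot,ct)\|_{L^{p'}}\lesssim t^{-\frac{N-1}{2p}}$. Since $\overline h\le1$ and $\overline h$ is essentially supported in $y_N\lesssim t^{1/2}$, we get
\[
\int_0^{Ct^{1/2}} t^{-\frac{N-1}{2p}}\|\phi(\cdot,y_N)\|_{L^p}\,\dee y_N\le \int_0^{Ct^{1/2}} y_N^{-\frac{N-1}{p}}\|\phi(\cdot,y_N)\|_{L^p}\,\dee y_N ,
\]
and the right-hand side tends to $0$ by \eqref{eq:1.16}. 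The Gaussian tails for $y_N\gtrsim t^{1/2}$ are treated in the same way, with an extra decaying factor.

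\emph{Assertion~(3).} For the upper bound we interpolate between $p\to p$ (contraction) and $1\to\infty$. By Riesz--Thorin, the latter reduces to the bound $\sup G(\cdot,\cdot,t)\lesssim t^{-N/2}$ for $k>0$. From \eqref{eq:1.11}, $\overline h\lesssim 1+t^{-1/2}$ times a bounded factor and $\Gamma_{N-1}\lesssim t^{-(N-1)/2}$. The borderline behavior for small $t$ in $D_1$ (where $\overline h=1$) gives $t^{-(N-1)/2}$, which is dominated by $t^{-N/2}$ only for small $t$; so we must check the product more carefully. I expect this case analysis over $D_1,\dots,D_4$ to be the main obstacle, in particular the factor $(x_N+y_N+t/\delta)\Gamma_1$ in $D_3$. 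For $k=0$ the Poisson-kernel bound in $D_1$ yields $\sup P\sim t^{-(N-1)}$, which accounts for the extra term. For the lower bounds we test with indicator functions of balls (or boundary balls) of radius $t^{1/2}$, respectively $t$ when $k=0$, and use the lower bound in \eqref{eq:1.11}.

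\emph{Assertion~(5).} We compare the two expressions for $H$ in \eqref{eq:1.8} with $k>0$ and $k=0$. Their integrands differ only in the time argument of $\Gamma_{N-1}$, namely $a=(t-\tau)/\epsilon$ versus $a+k\tau/\delta$. By the mean value theorem and the heat equation, this difference equals $(k\tau/\delta)\,\Delta'\Gamma_{N-1}$ evaluated at an intermediate time. Integrating against $\phi$ and $\psi$ moves $\Delta'$ into a factor of size $a^{-1}$. On the set $x_N+t/\delta\ge R$, the factor $\partial_{x_N}\Gamma_1(x_N+y_N+\tau/\delta,\cdot)$ is small precisely where $a$ is small (that is, $\tau$ near $t$), because the first argument stays at least of order $R$. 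This gives an integrable bound and yields $|u^{k}-u^{0}|\le C_{T,R}\,k$ uniformly.
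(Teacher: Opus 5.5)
Your route is largely the paper's. You get (4) from Theorem~\ref{Theorem:1.1}\,(4), the upper bound in (3) from a pointwise bound on $G$ plus interpolation, (2) by H\"older in $y'$ against the weight in \eqref{eq:1.16}, and (5) by writing the difference of the two $\Gamma_{N-1}$ factors as an integral of $\partial_\xi\Gamma_{N-1}$. Your Jensen argument for contraction with respect to $\dee y+(\delta/\epsilon)\,\dee\sigma$ is a cleaner substitute for the paper's $L^1$ estimate plus Riesz--Thorin.

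\textbf{The lower bound in (3) fails as written.} Balls of radius $t^{1/2}$ combined only with the lower bound in \eqref{eq:1.11} cannot give $t^{-\frac N2(\frac1p-\frac1q)}$ when $t<12\delta^2/\epsilon$ and $p<q$. For an interior ball: in $D_1$ that lower bound only gives $H\gtrsim\Gamma_{N-1}\sim t^{-(N-1)/2}$. In fact \eqref{eq:3.7} gives $\delta^{-1}\int_\Omega H(x,y,t)\,\dee y\lesssim t^{1/2}$, so the $H$-part of the solution is $O(t^{1/2})$. For a boundary ball: the solution is bounded by $1$ and has $L^1(\Omega)\times L^1(\partial\Omega)$-norm $\lesssim t^{(N-1)/2}$. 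That yields at best $t^{-\frac{N-1}{2}(\frac1p-\frac1q)}$, which is too weak as $t\to0^+$.

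The missing ingredient is the Dirichlet part. Since $H\ge0$, we have $G\ge G_0(\cdot,\cdot,t/\epsilon)$. Moreover $G_0(x,y,t/\epsilon)\gtrsim t^{-N/2}$ for $x,y$ in the ball of radius $(t/\epsilon)^{1/2}$ centred at height $2(t/\epsilon)^{1/2}$. This gives the sharp lower bound for every $t>0$. The paper does the same thing with the test function $\varphi=-\partial_{x_N}\Gamma_N(\cdot,t/\epsilon)$ and the identity \eqref{eq:3.32}. For $k=0$, your boundary balls of radius $t$, combined with the Poisson lower bound in $D_1$, do supply the extra term $t^{-(N-1)(\frac1p-\frac1q)}$.

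\textbf{The ``main obstacle'' in the upper bound is not one.} Note that $D_1\subset\{t<12\delta^2/\epsilon\}$, so $\Gamma_{N-1}\lesssim t^{-(N-1)/2}\lesssim t^{-N/2}$ there. In $D_2\cup D_4$ one has $\overline h\le\Gamma_1(0,t/\epsilon)\lesssim t^{-1/2}$. In $D_3$ the condition $x_N+y_N+t/\delta<\delta/\epsilon$ gives the same bound. Combined with $G_0(\cdot,\cdot,t/\epsilon)\lesssim t^{-N/2}$, this is exactly \eqref{eq:3.25}.

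\textbf{You never prove $\|\G(t)\|_{p\to p}=1$.} For your weighted norm it follows from three facts, as in the paper: the contraction, the lower bound with $q=p$, and $C^{-1}\le\|\G(nt)\|_{p\to p}\le\|\G(t)\|_{p\to p}^n$. For the unweighted norm of the statement, however, it fails for $p=1$ unless $\delta=\epsilon$. Take $\phi=0$ and $0\le\psi\in L^1(\partial\Omega)$. Theorem~\ref{Theorem:1.1}\,(1),(3) and \eqref{eq:3.7} at $x_N=0$ give
\[
\|\G(t)(0,\psi)\|_{L^1(\Omega)\times L^1(\partial\Omega)}=\Bigl(1+\bigl(\epsilon^{-1}-\delta^{-1}\bigr)A(t)\Bigr)\|\psi\|_{L^1(\partial\Omega)},
\quad A(t):=2\int_0^t\Gamma_1\Bigl(\frac{\tau}{\delta},\frac{t-\tau}{\epsilon}\Bigr)\,\dee\tau>0 .
\]
This exceeds $\|\psi\|_{L^1(\partial\Omega)}$ when $\delta>\epsilon$; when $\epsilon>\delta$, take $\psi=0$ and $\phi\ge0$ instead. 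The paper's $L^1$ computation regroups the four double integrals as if the norm were weighted. So keep your weighted formulation and state the equality for it.

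\textbf{A smaller omission in (2) for $k=0$.} The bound $H\lesssim\overline h\,\Gamma_{N-1}$ is false in $D_1$ when $k=0$. There you must use $H\lesssim P(x-y^*+\frac t\delta e_N)$ together with $\|P(\cdot,y_N+t/\delta)\|_{L^{p'}(\mathbb R^{N-1})}\lesssim y_N^{-(N-1)/p}$, as the paper does.
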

The convergence rate in assertion~(5) is optimal (see Remark~\ref{Remark:3.1}).
\begin{Remark}
\label{Remark:1.3}
The sufficient condition~\eqref{eq:1.16} for~\eqref{eq:1.15} is sharp.
To show this, let $\epsilon>0$, $\delta>0$, and $k\ge 0$ be fixed. 
Let $u(t)={\G}(t)(\phi,\psi)$, where $\phi(x)=|x|^{-\alpha}\chi_{B^+_1(0)}$ for $x\in\Omega$ with $\alpha<N$ 
and $\psi=0$ on~$\partial\Omega$.
Then it follows from~\eqref{eq:1.11} that
\[
	u(0,t) 
	\ge \int_{B^+_{(6t/\epsilon)^{1/2}}(0)}\Gamma_{N-1}\left(y',Ct\right)|y|^{-\alpha}\,\dee y
	\ge C\int_{B^+_{(6t/\epsilon)^{1/2}}(0)}t^{-\frac{N-1}{2}}|y|^{-\alpha}\,\dee y
	\ge Ct^{-\frac{\alpha-1}{2}}
\]
for all sufficiently small $t>0$.
Hence, \eqref{eq:1.15} does not hold at $x=0$ if $\alpha\ge 1$.
On the other hand, for any sufficiently small $L>0$, we have
\[
	\begin{aligned}
	\int_0^L y_N^{-\frac{N-1}{p}}\|\phi(\cdot,y_N)\|_{L^p({\mathbb R}^{N-1})}\,\dee y_N
	& \ge\int_0^L y_N^{-\frac{N-1}{p}}
	\left(\int_{B'_{y_N}(0)}(y_N^2+|y'|^2)^{-\frac{\alpha p}{2}}\,\dee y'\right)^{\frac{1}{p}}\,\dee y_N
	\\
	& \ge C\int_0^L y_N^{-\frac{N-1}{p}}y_N^{-\alpha+\frac{N-1}{p}}\,\dee y_N
	=C\int_0^Ly_N^{-\alpha}\,\dee y_N.
	\end{aligned}
\]
This implies that if $\phi$ satisfies \eqref{eq:1.16}, then $\alpha<1$.
Consequently, condition~\eqref{eq:1.16} is sharp for~\eqref{eq:1.15}.
\end{Remark}

As an application of Theorem~\ref{Theorem:1.1}, 
we further investigate the diffusion limits of solutions to problem~\eqref{eq:HDD} in various regimes. 
In particular, we show that several initial--boundary value problems such as 
\eqref{eq:HD}, \eqref{eq:HDN}, \eqref{eq:LDD}, and \eqref{eq:LD}, 
as well as the heat equation with the Dirichlet or Neumann boundary conditions, 
arise as diffusion limits of problem~\eqref{eq:HDD}.
The corresponding results are presented in Sections~4 and~5.
\medskip

The remainder of this paper is organized as follows.
In Section~\ref{section:2} we present several preliminary results on the integral kernels $\Gamma_d$, $G_0$, and $P$. 
Furthermore, we establish a T\"acklind-type uniqueness theorem for problems~\eqref{eq:HDD} and \eqref{eq:HD}.  
In Section~\ref{section:3} we prove Theorems~\ref{Theorem:1.1}, \ref{Theorem:1.2}, and~\ref{Theorem:1.3}.
In Section~\ref{section:4} we investigate the diffusion limits of solutions in the cases $\epsilon\to 0^+$ and $\epsilon\to\infty$, 
together with some related limiting problems.
In Section~\ref{section:5} we study the diffusion limits of solutions in the cases $\delta\to 0^+$, $\delta\to\infty$, and $k\to\infty$,
along with further related limiting problems. 
%
\section{Preliminaries}\label{section:2}
In this section, we recall some properties of the integral kernels
$\Gamma_d$, $G_0$, and $P$.
We also establish a T\"acklind-type uniqueness theorem for
problems~\eqref{eq:HDD} and \eqref{eq:HD}.
Throughout the paper, we use $C$ to denote generic positive constants,
whose values may change from line to line.
As in Section~1, we set
\[
	D:=\overline{\Omega}\times\overline{\Omega}\times(0,\infty),
	\qquad
	\Omega_L:=\{x=(x',x_N)\in\overline{\Omega}:\,0\le x_N\le L\}
	\quad\mbox{for $L>0$}.
\]

Let $\Gamma_d$ be defined as in \eqref{eq:1.1}. 
Then $\Gamma_d$ satisfies the following properties. 
\begin{enumerate}[label={\rm($\Gamma$\arabic*)}]
\item
	For $x\in{\mathbb R}^d$, $t$, $s\in(0,\infty)$, 
  	\[
  		\int_{{\mathbb R}^d}\Gamma_d(x,t)\,\dee x=1,
  		\qquad
  		\Gamma_d(x,t+s)=\int_{{\mathbb R}^d}\Gamma_d(x-z,t)\Gamma_d(z,s)\,\dee z.
  	\] 
\item
  	For any $k$, $\ell=0,1,2,\dots$, there exists $C>0$ such that 
  	\begin{equation}
  	\label{eq:2.1}
  		\left|\partial_t^\ell \nabla_x^k\Gamma_d(x,t)\right|
		\le Ct^{-\frac{d+k+2\ell}{2}}\exp\left(-\frac{|x|^2}{8t}\right),
  		\quad (x,t)\in{\mathbb R}^d\times(0,\infty).
  	\end{equation}
  	This yields 
  	\begin{equation}
  	\label{eq:2.2}
		\begin{aligned}
         	&  
		\|\partial_t^\ell \nabla_x^k\Gamma_d(t)\|_{L^p({\mathbb R}^d)}
		\le Ct^{-\frac{d}{2}\left(1-\frac{1}{p}\right)-\frac{k+2\ell}{2}},
		\\
	 	& 
		\|\partial_t^\ell \partial_x^k\Gamma_1(\cdot+\tau,t)\|_{{L^p(\mathbb R_+)}}
	 	\le Ct^{-\frac{1}{2}\left(1-\frac{1}{p}\right)-\frac{k+2\ell}{2}}\exp\left(-\frac{\tau^2}{16t}\right),
		\end{aligned}
  	\end{equation}
  	for $t>0$ and $\tau\ge 0$, where $p\in[1,\infty]$. 
\item
  	If $\phi\in L^p({\mathbb R}^d)$ with $1\le p<\infty$,   
  	\[
  		\lim_{t\to 0^+}\left\|\,\int_{{\mathbb R}^d}\Gamma_d(\cdot-y)\phi(y)\,\dee y-\phi\,\right\|_{L^p({\mathbb R}^d)}=0.
  	\]
  	If $\phi\in BC({\mathbb R}^d)$, then 
  	\[
  		\lim_{t\to 0^+}\sup_{x\in K}\left|\,\int_{\mathbb R^d}\Gamma_d(x-y)\phi(y)\,\dee y-\phi(x)\,\right|=0
  	\]
	for any compact set $K\subset{\mathbb R}^d$.
\end{enumerate} 

Next, we consider the heat equation in $\Omega$ with an inhomogeneous Dirichlet boundary condition:
\begin{equation}
\label{eq:HiD}
\tag{$\mbox{H}_{{\rm D}, F}$}
	\left\{
	\begin{array}{ll}
	\epsilon\partial_t u-\Delta u=0\quad & \mbox{in}\quad\Omega\times(0,\infty),
	\vspace{3pt}\\
	u=F\quad & \mbox{on}\quad \partial\Omega\times(0,\infty),
	\vspace{3pt}\\
	u=\phi\quad & \mbox{in}\quad\Omega\times\{0\},
	\end{array}
	\right.
\end{equation}
where $\epsilon>0$, $F\in L^\infty(\partial\Omega\times(0,\infty))$, and $\phi\in L^p(\Omega)$ with $p\in[1,\infty]$.
A function $u$ on $\overline{\Omega}\times(0,\infty)$ 
is called a solution to problem~\eqref{eq:HiD} 
if $u$ satisfies 
\begin{equation}
\label{eq:2.3}
	\begin{aligned}
	&
	u(x,t)
	=\uHDi{\epsilon}{F}(x,t) 
	\\
	& :=\int_\Omega G_0\left(x,y,\frac{t}{\epsilon}\right)\phi(y)\,\dee y
	+\frac{1}{\epsilon}\int_0^t\int_{\partial\Omega} 
	\partial_{y_N}G_0\left(x,y,\frac{t-\tau}{\epsilon}\right)F(y,\tau)\,\dee\sigma(y)\,\dee\tau
	\\
 	& \,\,=\int_\Omega G_0\left(x,y,\frac{t}{\epsilon}\right)\phi(y)\,\dee y
	\\
 	& \qquad
 	-\frac{2}{\epsilon}\int_0^t\int_{\partial\Omega}\Gamma_{N-1}\left(x'-y',\frac{t-\tau}{\epsilon}\right)
	\partial_{x_N}\Gamma_1\left(x_N,\frac{t-\tau}{\epsilon}\right)F(y,\tau)\,\dee\sigma(y)\,\dee\tau
	\end{aligned}
\end{equation}
for all $(x,t)\in\overline{\Omega}\times(0,\infty)$. 
If $F\in BC(\partial\Omega\times(0,\infty))$ and $\phi\in BC(\Omega)$, 
then $\uHDi{\epsilon}{F}$ is a bounded classical solution to problem~\eqref{eq:HiD}. 
In particular, if $F\equiv 0$ on $\partial\Omega\times(0,\infty)$), 
then we refer to problem~\eqref{eq:HiD} as $({\rm H_{D,0}})$ and write $\uHDi{\epsilon}{F}=\uHDi{\epsilon}{0}$. 
It turns out that
\begin{equation}
\label{eq:2.4}
	\uHDi{\epsilon}{0}(x,t)
	=\int_\Omega G_0\left(x,y,\frac{t}{\epsilon}\right)\phi(y)\,\dee y,\quad (x,t)\in \overline{\Omega}\times(0,\infty),
\end{equation}
and it follows from properties ($\Gamma$2) and ($\Gamma$3) that 
\begin{equation}
\label{eq:2.5}
	\begin{aligned}
 	& 
	\lim_{t/\epsilon\to 0^+}\|\uHDi{\epsilon}{0}(t)-\phi\|_{L^p(\Omega)}=0
	\quad\mbox{if $\phi\in L^p(\Omega)$, where $1\le p<\infty$};
	\\
 	& 
	\lim_{t/\epsilon\to 0^+}\sup_{x\in K}|\uHDi{\epsilon}{0}(x,t)-\phi(x)|=0
	\quad\mbox{for any compact set $K\subset\Omega$ if $\phi\in BC(\Omega)$};
	\\
 	& 
	\sup_{t>0}\,\,t^{\frac{N}{2p}}\|\uHDi{\epsilon}{0}(t)\|_{L^\infty(\Omega)}
	\le C\epsilon^{\frac{N}{2p}}\|\phi\|_{L^p(\Omega)}
	\quad \mbox{if $\phi\in L^p(\Omega)$, where $1\le p\le\infty$}.
	\end{aligned}
\end{equation}
It also follows from \cite{FIK21}*{Lemma~2.1} that, for any $L>0$, 
\begin{equation}
\label{eq:2.6}
	\sup_{t>0}\,\,t^{\frac{1}{2}}\|\uHDi{\epsilon}{0}(t)\|_{L^\infty(\Omega_L)}
	\le C\epsilon^{\frac{1}{2}}\|\phi\|_{L^\infty(\Omega)}.
\end{equation}
In this paper, we consider the case where the boundary data $F$ is given by
\begin{equation}
\label{eq:2.7}
	\Psi(x,t):=\int_{\partial\Omega}\Gamma_{N-1}\left(x'-y',\frac{t}{\theta}\right)\psi(y)\,\dee\sigma(y),
	\quad (x,t)\in \partial\Omega\times(0,\infty),
\end{equation}
where $\theta>0$ and $\psi\in L^p(\partial\Omega)$ with $p\in[1,\infty]$.
Here $\Psi$ corresponds to a solution to the problem 
\[
	\theta\partial_t\Psi-\Delta'\Psi=0\quad\mbox{in}\quad{\mathbb R}^{N-1}\times(0,\infty),
	\quad 
	\Psi=\psi\quad\mbox{on}\quad{\mathbb R}^{N-1}\times\{0\}. 
\]
We denote by $\uHDP{\epsilon}{\theta}$ the solution of problem~\eqref{eq:HiD} with $F=\Psi$.
Then, by property~($\Gamma$1), \eqref{eq:2.3}, and \eqref{eq:2.7}, we obtain 
\begin{equation}
\label{eq:2.8}
 	\uHDP{\epsilon}{\theta}(x,t)
 	=\int_\Omega G_0\left(x,y,\frac{t}{\epsilon}\right)\phi(y)\,\dee y
 	+\frac{1}{\epsilon}\int_{\partial\Omega}\tilde H(x,y,t)\psi(y)\,\dee\sigma(y)
\end{equation}
for $(x,t)\in \overline{\Omega}\times(0,\infty)$, 
where
\begin{equation}
\label{eq:2.9}
	\tilde H(x,y,t):=-2\int_0^t\Gamma_{N-1}\left(x'-y',\frac{t-\tau}{\epsilon}+\frac{\tau}{\theta}\right)
	\partial_{x_N}\Gamma_1\left(x_N,\frac{t-\tau}{\epsilon}\right)\,\dee\tau\ge 0.
\end{equation}
Similarly, we consider the heat equation in $\Omega$ with the homogeneous Neumann boundary condition:
\begin{equation}
\label{eq:HhN}
\tag{$\mbox{H}_{{\rm N, 0}}$}
	\left\{
	\begin{array}{ll}
	\epsilon\partial_t u-\Delta u=0\quad & \mbox{in}\quad \Omega\times(0,\infty),
	\vspace{3pt}\\
	-\partial_{x_N}u=0\quad & \mbox{on}\quad \partial\Omega\times(0,\infty),
	\vspace{3pt}\\
	u=\phi\quad & \mbox{in}\quad\Omega\times\{0\},
	\end{array}
	\right.
\end{equation}
where $\epsilon>0$ and $\phi\in L^p(\Omega)$ with $p\in[1,\infty]$.
A function $u$ on $\overline{\Omega}\times(0,\infty)$ is called a solution to problem~\eqref{eq:HhN} 
if $u$ satisfies 
\[
	u(x,t)={\uHhN{\epsilon}(x,t)}
	:=\int_\Omega G_N\left(x,y,\frac{t}{\epsilon}\right)\phi(y)\,\dee y
\]
for all $(x,t)\in\overline{\Omega}\times(0,\infty)$,
where
\begin{equation}
\label{eq:2.10}
	G_N(x,y,t)=\Gamma_N(x-y,t)+\Gamma_N(x-y^*,t),\quad (x,y,t)\in D.
\end{equation}
Then 
$$
	\uHhN{\epsilon}\in C^{2;1}(\Omega\times(0,\infty))\cap C^{1;0}(\overline{\Omega}\times(0,\infty)),
$$ 
and it satisfies the first and second equations of \eqref{eq:HhN} pointwise, 
as well as the properties stated in~\eqref{eq:2.5}.
\medskip

Let $P$ be defined as in \eqref{eq:1.6}. 
For any $\psi\in L^p(\partial\Omega)$ with $p\in[1,\infty]$, 
a function $u$ on $\Omega$ is called a solution to the Laplace equation in $\Omega$ 
with an inhomogeneous Dirichlet boundary condition:
\begin{equation}
\label{eq:LiD}
\tag{$\mbox{L}_{{\rm D,\psi}}$}
	-\Delta u=0\quad\mbox{in}\quad\Omega,\qquad u=\psi\quad\mbox{on}\quad\partial\Omega,
\end{equation}
if $u$ satisfies 
\begin{equation}\label{eq:2.11}
	u(x)=\uLDi{\psi}(x):=\int_{\partial\Omega}P(x'-y',x_N)\psi(y)\,\dee\sigma(y)
\end{equation}
for all $x=(x',x_N)\in\Omega$. 
Then $\uLDi{\psi}\in C^2(\Omega)$ and is harmonic in $\Omega$. 
Furthermore, 
$\uLDi{\psi}$ satisfies
\begin{equation}
\label{eq:2.12}
	\begin{aligned}
  	& \lim_{x_N\to 0^+}\|\uLDi{\psi}(x_N)-\psi\|_{L^p(\mathbb R^{N-1})}=0
	\quad\mbox{if}\quad p\in[1,\infty),
	\\
  	& \lim_{x_N\to 0^+}\sup_{x'\in K'}\left|\uLDi{\psi}(x',x_N)-\psi(x')\right|=0
	\quad\mbox{for any compact set $K'\subset\partial\Omega$ if $\psi\in BC(\partial\Omega)$},
	\\
  	& \sup_{x_N>0}\,\,x_N^{\frac{N-1}{p}}\|\uLDi{\psi}(x_N)\|_{L^\infty({\mathbb R}^{N-1})}
	\le C\|\psi\|_{L^p(\partial\Omega)}
	\quad \mbox{if $\psi\in L^p(\partial\Omega)$, where $1\le p\le\infty$}.
	\end{aligned}
\end{equation}
(See e.g., \cite{FIK21}*{Section~2}.)
In a similar way, we consider problem~\eqref{eq:LD} 
(the Laplace equation in $\Omega$ with a (non-diffusive) dynamical boundary condition), 
which already appeared in Section~1. 
For any $\psi\in L^p(\partial\Omega)$ with $1\le p\le\infty$, 
a function $u$ on $\Omega\times(0,\infty)$ is called a solution to problem~\eqref{eq:LD} 
if $u$ satisfies 
\[
	u(x,t)={\uLD{\delta}(x,t)}
	:=\int_{\partial\Omega}P\left(x'-y',x_N+\frac{t}{\delta}\right)\psi(y)\,\dee\sigma(y)
\]
for all $(x,t)=(x',x_N,t)\in\overline{\Omega}\times(0,\infty)$. 
Then $\uLD{\delta}\in {C^{2;0}(\Omega\times(0,\infty))\cap C^{1;1}(\overline{\Omega}\times(0,\infty))}$
and it satisfies the first and second equations of~\eqref{eq:LD} pointwise. 
Furthermore, $\uLD{\delta}$ satisfies 
\[
	\begin{aligned}
  	& \lim_{t\to 0^+}\|\uLD{\delta}(t)-\psi\|_{L^p(\partial\Omega)}=0
	\quad\mbox{if}\quad p\in[1,\infty),
	\\
  	& \lim_{t\to 0^+}\sup_{x\in K'}\left|\uLD{\delta}(x,t)-\psi(x)\right|=0
	\quad\mbox{for any compact set $K'\subset\partial\Omega$ if $\psi\in BC(\partial\Omega)$}.
	\end{aligned}
\]
In addition, it follows from \eqref{eq:2.12} that
\begin{equation}
\label{eq:2.13}
	\sup_{(x_N,t)\in[0,\infty)\times(0,\infty)}\,
	\left(x_N+\frac{t}{\delta}\right)^{\frac{N-1}{p}}\|\uLD{\delta}(x_N,t)\|_{L^\infty(\mathbb R^{N-1})}
	\le C\|\psi\|_{L^p(\partial\Omega)},\qquad p\in[1,\infty].
\end{equation}

In the rest of this section, 
we prove a T\"acklind-type uniqueness theorem for problems~\eqref{eq:HDD} and \eqref{eq:HD}.
\begin{proposition}
\label{Proposition:2.1}
	Let $u$ be a bounded classical solution to problem~\eqref{eq:HDD} or problem~\eqref{eq:HD} 
	with zero initial data, i.e.,
	$\phi\equiv 0$ in $\Omega$ and $\psi\equiv 0$ on $\partial\Omega$. 
	Assume that, for any $T\in(0,\infty)$, there exists $\gamma>0$ such that
	\begin{equation}
	\label{eq:2.14}
		\int_0^T\int_{B^+_R(0)\setminus B^+_{R/2}(0)} u^2\,\dee x\,\dee t
		+\int_0^T\int_{B'_R(0)\setminus B'_{R/2}(0)} u^2\,\dee\sigma(x)\,\dee t
		\le \exp(\gamma R^2)
	\end{equation}
	for all $R>1$. 
	Then $u$ is identically zero in $\overline{\Omega}\times(0,\infty)$. 
\end{proposition}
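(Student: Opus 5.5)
We follow the classical Tacklind/Tikhonov energy method with Gaussian weights, run on the coupled bulk--boundary system. Its natural energy is
\[
E(t)=\epsilon\int_\Omega u^2\,\dee x+\delta\int_{\partial\Omega}u^2\,\dee\sigma ,
\]
and the bulk and boundary contributions should combine exactly through the flux term $\partial_{x_N}u$. Since the solution is bounded, the growth hypothesis~\eqref{eq:2.14} holds automatically; we keep~\eqref{eq:2.14} because it is all the argument uses.

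\textbf{Weighted energy identity.} Fix $T_0>0$, a small $h>0$ (depending on $\gamma$, $\epsilon$, $\delta$, $k$), and a point $t_1\le T_0$. Let $\eta_R$ be a cutoff with $\eta_R=1$ on $B_{R/2}$, $\eta_R=0$ outside $B_R$, and $|\nabla\eta_R|\le C/R$; take it independent of $x_N$ near $\partial\Omega$ so that its tangential gradient is also $O(1/R)$. Set
\[
\rho(x,t)=\exp\Bigl(-\frac{a|x|^2}{t_1+h-t}\Bigr),\qquad t\in[t_1-h,t_1],
\]
where $a>0$ is small. The weight $\rho$ satisfies $\epsilon\partial_t\rho+\frac{1}{\mu}|\nabla\rho|^2/\rho\le0$ when $a\le\epsilon\mu/4$, and the tangential analogue $\delta\partial_t\rho+\frac{k}{\mu}|\nabla'\rho|^2/\rho\le 0$ holds for $a\le\delta\mu/(4k)$. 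We multiply the bulk equation by $u\eta_R^2\rho$ and integrate over $\Omega$. After integrating by parts, the boundary term is $\int_{\partial\Omega}\partial_{x_N}u\,u\eta_R^2\rho\,\dee\sigma$ (with the outward normal $-e_N$ this appears with the sign that matches the boundary equation). We then substitute $\partial_{x_N}u=\delta\partial_tu-k\Delta'u$ and integrate by parts once more on $\partial\Omega$, which gives
\[
\frac12\frac{\dee}{\dee t}\Bigl(\epsilon\!\int_\Omega u^2\eta_R^2\rho+\delta\!\int_{\partial\Omega}u^2\eta_R^2\rho\Bigr)
+\int_\Omega|\nabla u|^2\eta_R^2\rho+k\!\int_{\partial\Omega}|\nabla'u|^2\eta_R^2\rho
=\text{(cross terms)}.
\]
The cross terms are of two kinds. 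Those involving $\nabla\rho$ and $\nabla'\rho$ are absorbed by Young's inequality into the gradient terms, using the weight inequalities above with $\mu=1/2$. Those involving $\nabla\eta_R$ and $\nabla'\eta_R$ are supported in the annuli and bounded by $CR^{-2}\int u^2\rho$ over $B^+_R\setminus B^+_{R/2}$ and $B'_R\setminus B'_{R/2}$. When $k=0$ the tangential terms are simply absent. The regularity issue is that $u$ is $C^{2;1}$ only for $t>0$ and may be discontinuous at $\overline\Omega\times\{0\}$. We handle it by integrating in time over $[s,t_1]$ with $s>0$, letting $s\to0^+$, and using that $u\to0$ (in the sense of zero initial data) together with the continuity of $u|_{\partial\Omega}$ up to $t=0$ and of $u$ on $\Omega\times[0,\infty)$, plus dominated convergence on the compact support of $\eta_R$.

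\textbf{Letting $R\to\infty$.} On the annuli, $\rho\le\exp(-aR^2/(8h))$ because $t_1+h-t\le 2h$. By~\eqref{eq:2.14} the annulus error is therefore at most $CR^{-2}\exp(\gamma R^2-aR^2/(8h))$, which tends to $0$ once $h<a/(8\gamma)$. Since $\rho>0$, this gives $E(t)=0$ on the strip $[t_1-h,t_1]$, in fact starting from zero data at $t=0$. Here is where $h$ enters: $h$ depends only on $a$ (fixed by $\epsilon,\delta,k$) and on the $\gamma$ for $T_0$, not on $t_1$. We start with the strip $[0,h]$, where the data vanish, and then repeat the argument on $[h,2h]$ and so on, each time using the vanishing of $u$ at the new initial time. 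This gives $u\equiv0$ in $\overline\Omega\times(0,T_0]$, and since $T_0$ is arbitrary, everywhere.

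\textbf{Main obstacle.} The delicate step is the boundary bookkeeping. For the \eqref{eq:HD} case ($k=0$) the solution is only $C^{1;1}$ up to the boundary, so the integration by parts must be justified by working on $\{x_N>\eta\}$ and letting $\eta\to0^+$. For the \eqref{eq:HDD} case, the tangential integration by parts needs the weight inequality for $\rho$ in the boundary variables as well, which forces $a\le\min\{\epsilon,\delta/k\}/8$. Checking that all the cross terms close with one choice of $a$ is where I expect the main care to be needed.
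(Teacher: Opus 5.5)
Your argument is correct and is essentially the paper's proof: the same T\"acklind-type weighted energy estimate for $\epsilon\int_\Omega u^2+\delta\int_{\partial\Omega}u^2$, with the boundary equation substituted for $\partial_{x_N}u$ and the cross terms absorbed through $\epsilon\partial_t w+4|\nabla w|^2\le 0$ and $\delta\partial_t w+4k|\nabla' w|^2\le 0$. As in the paper, the annulus error is killed via \eqref{eq:2.14} by taking the time step small relative to $\gamma$, and the limit $t\to0^+$ is handled by dominated convergence. The only difference is the bookkeeping: the paper uses the weight $\exp(-\lambda d_R(x)^2/(\tilde T-t))$, which equals $1$ on $B^+_{R/2}(0)$, and chains unweighted local energies over half-steps with radii $R_n=4^nR/2$ starting from $t=0$, whereas you use $\exp(-a|x|^2/(t_1+h-t))$ and proceed strip by strip from $u\equiv0$ at each new initial time, which is equally valid and slightly simpler.
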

{\bf Proof.}
The proof is a modification of the proof of \cite{IM98}*{Theorem~B} (see also \cite{IM01}*{Theorem~2.1}).
Let $R>1$, and set 
\begin{equation}
\label{eq:2.15}
	d_R(x):=\max\left\{0,|x|-\frac{R}{2}\right\},
	\quad
	\zeta_R(x):=\frac{1}{R}\max\{0,\min\{2R-|x|,R\}\},\quad x\in\overline{\Omega}.
\end{equation}
Then 
\begin{equation}
\label{eq:2.16}
	\zeta_R=1\,\,\,\mbox{in}\,\,\,B_R^+(0),\quad
	|\nabla\zeta_R(x)|^2\le \frac{1}{R^2}\,\,\,\mbox{in}\,\,\,B^+_{2R}(0)\setminus B_R^+(0),
	\quad
	\zeta_R=0\,\,\,\mbox{in}\,\,\,\Omega\setminus B^+_{2R}(0). 
\end{equation}
Let $T>0$ be fixed. 
Let $a$ be a sufficiently small positive constant to be chosen later such that $a\in(0,T/4]$. 
For any $\tilde{T}\in[a,T]$, we define $t_1:=\tilde{T}-a\ge 0$ and $t_2:=\tilde{T}-a/2$. 
Set 
\begin{equation}
\label{eq:2.17}
	w(x,t):=-\frac{\lambda d_R(x)^2}{\tilde{T}-t},\quad (x,t)\in\overline{\Omega}\times(t_1,t_2),
\end{equation}
where $\lambda>0$. 
If $t_1>0$, we obtain 
\[
	\begin{aligned}
 	& \epsilon\int_{t_1}^{t_2}\int_\Omega\partial_t u e^w u\zeta_R^2\,\dee x\,\dee t
	=\epsilon\int_{t_1}^{t_2}\int_\Omega
	\left\{\frac{1}{2}\partial_t (e^w u^2\zeta_R^2)-\frac{1}{2}e^w u^2\zeta_R^2\partial_t w\right\}\,\dee x\,\dee t
	\\
 	& =\frac{\epsilon}{2}\int_\Omega e^{w(t)}u(t)^2\zeta_R^2\,\dee x\,\biggr|_{t=t_1}^{t=t_2}
	-\frac{\epsilon}{2}\int_{t_1}^{t_2}\int_\Omega e^w u^2\zeta_R^2\partial_tw\,\dee x\,\dee t
	\end{aligned}
\]
and
\[
	\begin{aligned}
  	& -\int_{t_1}^{t_2}\int_\Omega \Delta u e^wu \zeta_R^2\,\dee x\,\dee t
  	=\int_{t_1}^{t_2}\int_{\partial\Omega}\partial_{x_N}u\cdot e^w u\zeta_R^2\,\dee\sigma(x)\,\dee t
	+\int_{t_1}^{t_2}\int_\Omega \nabla u\cdot\nabla (e^w u\zeta_R^2)\,\dee x\,\dee t
	\\
 	& \ge \int_{t_1}^{t_2}\int_{\partial\Omega}(\delta\partial_t-k\Delta')u\cdot e^wu\zeta_R^2\,\dee\sigma(x)\,\dee t
	\\
 	& \qquad\quad
 	+\int_{t_1}^{t_2}\int_\Omega \left\{e^w|\nabla u|^2\zeta_R^2-e^w u|\nabla u|
	[|\nabla w|\zeta_R^2+2\zeta_R|\nabla\zeta_R|]\right\}\,\dee x\,\dee t
	\\
  	& \ge \int_{t_1}^{t_2}\int_{\partial\Omega}(\delta\partial_t-k\Delta')u\cdot e^wu\zeta_R^2\,\dee\sigma(x)\,\dee t
	 -2\int_{t_1}^{t_2}\int_\Omega\left\{e^w u^2[|\nabla w|^2\zeta_R^2+|\nabla\zeta_R|^2]\right\}\,\dee x\,\dee t.
	\end{aligned}
\]
Similarly, we have
\[
	\begin{aligned}
	& \int_{t_1}^{t_2}\int_{\partial\Omega}(\delta\partial_t-k\Delta')u\cdot e^wu\zeta_R^2\,\dee\sigma(x)\,\dee t
	\\
 	& \ge \frac{\delta}{2}\int_{\partial\Omega} e^{w(t)}u(t)^2\zeta_R^2\,\dee\sigma(x)\,\biggr|_{t=t_1}^{t=t_2}
 	-\frac{\delta}{2}\int_{t_1}^{t_2}\int_{\partial\Omega} e^wu^2\zeta_R^2\partial_t w\,\dee\sigma(x)\,\dee t
	\\
 	& \qquad\quad
 	-2k\int_{t_1}^{t_2}\int_{\partial\Omega} \left\{e^w u^2[|\nabla' w|^2\zeta_R^2+|\nabla'\zeta_R|^2]\right\}\,\dee\sigma(x)\,\dee t.
	\end{aligned}
\]
Hence, by the equations in \eqref{eq:HDD} and \eqref{eq:2.16}, we obtain 
\begin{equation}
\label{eq:2.18}
	\begin{aligned}
 	& \left[\epsilon\int_{B^+_R(0)}e^{w(t)}u(t)^2\,\dee x+\delta\int_{B'_R(0)}e^{w(t)}u(t)^2\,\dee\sigma(x)\right]\,\biggr|_{t=t_2}
	\\
 	& \le\left[\epsilon\int_{B^+_{2R}(0)}e^{w(t)}u(t)^2\,\dee x
	+\delta\int_{B'_{2R}(0)}e^{w(t)}u(t)^2\,\dee\sigma(x)\right]\,\biggr|_{t=t_1}
	\\
 	& \qquad
  	+\int_{t_1}^{t_2}\int_\Omega e^w u^2\zeta_R^2\left\{\epsilon\partial_t w+4|\nabla w|^2\right\}\,\dee x\,\dee t
	\\
 	& \qquad\qquad
  	+\int_{t_1}^{t_2}\int_{\partial\Omega} e^w u^2\zeta_R^2\left\{\delta\partial_t w
	+4k|\nabla' w|^2\right\}\,\dee\sigma(x)\,\dee t
	\\
 	& \qquad\qquad\qquad
 	+\frac{4}{R^2}\int_{t_1}^{t_2}\left(\int_{B^+_{2R}(0)\setminus B^+_R(0)}e^w u^2\,\dee x
	+k\int_{B'_{2R}(0)\setminus B'_R(0)}e^w u^2\,\dee\sigma(x)\right)\,\dee t.
	\end{aligned}
\end{equation}
If $t_1=0$, we first obtain the same estimate as in \eqref{eq:2.18}
with $t_1$ replaced by $\tau\in(0,t_2)$,
and let $\tau\to 0^+$.
The passage to the limit is justified by the Lebesgue dominated convergence theorem.

Setting $\lambda=\min\{\epsilon/16,\delta/(16k)\}$ when $k>0$ and $\lambda=\epsilon/16$ when $k=0$, 
we have
\[
	\begin{aligned}
 	& \epsilon\partial_t w+4|\nabla w|^2
 	\le -\frac{\epsilon\lambda d_R(x)^2}{(\tilde{T}-t)^2}
	+16\frac{\lambda^2 d_R(x)^2}{(\tilde{T}-t)^2}\le 0\quad\mbox{in}\quad\Omega\times(t_1,t_2),
	\\
 	& 
	\delta\partial_t w+4k|\nabla' w|^2
	\le -\frac{\delta\lambda d_R(x)^2}{(\tilde{T}-t)^2}
	+16k\frac{\lambda^2 d_R(x)^2}{(\tilde{T}-t)^2}\le 0\quad\mbox{on}\quad\partial\Omega\times(t_1,t_2).
	\end{aligned}
\]
These, together with \eqref{eq:2.18}, the non-positivity of $w$, 
and the fact that $d_R(x)=0$ for $x\in B_{R/2}$, imply that 
\begin{equation}
\label{eq:2.19}
	\begin{aligned}
 	& \left[\epsilon\int_{B^+_{R/2}(0)}u(t)^2\,\dee x+\delta\int_{B'_{R/2}(0)}u(t)^2\,\dee\sigma(x)\right]\,\biggr|_{t=t_2}
	\\
 	& \le\left[\epsilon\int_{B^+_{2R}(0)}u(t)^2\,\dee x+\delta\int_{B'_{2R}(0)}u(t)^2\,\dee\sigma(x)\right]\,\biggr|_{t=t_1}
	\\
 	& \qquad
 	+\frac{4}{R^2}\int_{t_1}^{t_2}\left(\int_{B^+_{2R}(0)\setminus B^+_R(0)}e^w u^2\,\dee x
	+\int_{B'_{2R}(0)\setminus B'_R(0)}e^w u^2\,\dee\sigma(x)\right)\,\dee t.
	\end{aligned}
\end{equation}
On the other hand, it follows from \eqref{eq:2.14}, \eqref{eq:2.15}, and \eqref{eq:2.17} that
\[
	\begin{aligned}
 	& \int_{t_1}^{t_2}\left(\int_{B^+_{2R}(0)\setminus B^+_R(0)}e^w u^2\,\dee x
	+\int_{B'_{2R}(0)\setminus B'_R(0)}e^w u^2\,\dee\sigma(x)\right)\,\dee t
	\\
 	& \le \exp\left(-\frac{\lambda R^2}{4(\tilde{T}-t_1)}\right)
 	\int_{t_1}^{t_2}\left(\int_{B^+_{2R}(0)\setminus B^+_R(0)}u^2\,\dee x
	+\int_{B'_{2R}\setminus B'_R(0)}u^2\,\dee\sigma(x)\right)\,\dee t
	\\
 	& \le \exp\left(-\frac{\lambda R^2}{4(\tilde{T}-t_1)}+4\gamma R^2\right)
 	=\exp\left(-\frac{\lambda R^2}{4a}+4\gamma R^2\right).
	\end{aligned}
\]
Taking sufficiently small $a\in(0,T/4]$ so that $a\le \lambda/(16\gamma)$ if necessary, 
we obtain 
\[
	\int_{t_1}^{t_2}\left(\int_{B^+_{2R}(0)\setminus B^+_R(0)}e^w u^2\,\dee x
	+\int_{B'_{2R}(0)\setminus B'_R(0)}e^w u^2\,\dee\sigma(x)\right)\,\dee t \le 1. 
\]
This, together with \eqref{eq:2.19}, implies that
\[ 
	\begin{aligned}
 	& \left[\epsilon\int_{B^+_{R/2}(0)}u(t)^2\,\dee x
	+\delta\int_{B'_{R/2}(0)}u(t)^2\,\dee\sigma(x)\right]\,\biggr|_{t=\tilde{T}-a/2}
	\\
 	& \le\left[\epsilon\int_{B^+_{2R}(0)}u(t)^2\,\dee x
	+\delta\int_{B'_{2R}(0)}u(t)^2\,\dee\sigma(x)\right]\,\biggr|_{t=\tilde{T}-a}+\frac{4}{R^2}.
	\end{aligned}
\]

Recalling that $a\in(0,T/4]$, we find an integer $\ell=2,3,\ldots$ such that 
$\ell a/2 \le T < (\ell+1)a/2$, and set $\tilde{T}=\ell a/2$.
Define $R_n=4^nR/2$ for $n=0,1,\dots$. 
Then we have
\[
	\begin{aligned}
	& \left[\epsilon\int_{B^+_{R_0}(0)}u(t)^2\,\dee x
	+\delta\int_{B'_{R_0}(0)}u(t)^2\,\dee\sigma(x)\right]\,\biggr|_{t=\tilde{T}-a/2}\\
 	& \le \left[\epsilon\int_{B^+_{R_{\ell-1}}(0)}u(t)^2\,\dee x
	+\delta\int_{B'_{R_{\ell-1}}(0)}u(t)^2\,\dee\sigma(x)\right]\,\biggr|_{t=0}
	+4\sum_{n=1}^{\ell-1}\left(\frac{R_n}{2}\right)^{-2}
	\le CR^{-2}
	\end{aligned}
\]
for $R>1$. 
Letting $R\to\infty$, we obtain 
\[
	\left[\epsilon\int_\Omega u(x,t)^2\,\dee x
	+\delta\int_{\partial\Omega} u(x,t)^2\,\dee\sigma(x)\right]\biggr|_{t=(\ell-1)a/2}=0,
\]
which implies that $u(x,t)\equiv 0$ in $\overline{\Omega}$ for $t=(\ell-1)a/2$. 
Since $\ell a/2 \le T < (\ell+1)a/2$, we have $(\ell-1)a/2\in(T-a,T-a/2]$.
Letting $a\to 0^+$, we deduce that $u(x,T)\equiv 0$ in $\overline{\Omega}$. 
As $T\in(0,\infty)$ is arbitrary, 
we conclude that $u\equiv 0$ in $\overline{\Omega}\times(0,\infty)$.
Thus, Proposition~\ref{Proposition:2.1} follows.
$\Box$
\begin{remark}
\label{Remark:2.1}
{\rm (1)}
In the proofs of \cite{IM98}*{Theorem~B} and \cite{IM01}*{Theorem~2.1},
the T\"acklind-type uniqueness theorems were proved under the assumption 
that solutions are extended by zero for $t\in(-\infty,0)$.
However, in the definition of our bounded classical solution,
such an extension cannot be applied.
\newline
{\rm (2)} 
In~\cite{IKK25}, the uniqueness of bounded classical solutions to problem~\eqref{eq:HD} 
was established as a consequence of \cite{GH18}*{Theorem~3.1}. 
Proposition~{\rm\ref{Proposition:2.1}} yields an alternative proof of uniqueness for bounded classical solutions to problem~\eqref{eq:HD}, 
which moreover extends to solutions with exponential growth.
\end{remark}
%
\section{Proof of Theorems~\ref{Theorem:1.1}--\ref{Theorem:1.3}}\label{section:3}
In this section, we prove Theorems~\ref{Theorem:1.1}--\ref{Theorem:1.3}.
Throughout this section, we set
\[
	E:=\{(x,y,t,\tau)\,:\,(x,y,t)\in D,\,\,\tau\in(0,t)\},
\]
where $D=\overline{\Omega}\times\overline{\Omega}\times(0,\infty)$.
\vspace{5pt}
\newline
{\bf Proof of Theorem~\ref{Theorem:1.1}.}
Let $\epsilon>0$, $\delta>0$, and $k\ge 0$ be fixed.
Define
\begin{equation}
\label{eq:3.1}
	\begin{aligned}
	f(x,y,t,\tau):= & \,
	\Gamma_{N-1}\left(x'-y',\frac{t-\tau}{\epsilon}+\frac{k}{\delta}\tau\right)g(x,y,t,\tau),
	\\
	g(x,y,t,\tau) := & \, 
	-2\partial_{x_N}\Gamma_1\left(x_N+y_N+\frac{\tau}{\delta},\frac{t-\tau}{\epsilon}\right)
	\\
	= & \,
	\frac{\epsilon(x_N+y_N+\tau/\delta)}{t-\tau}\Gamma_1\left(x_N+y_N+\frac{\tau}{\delta},\frac{t-\tau}{\epsilon}\right),
	\end{aligned}
\end{equation}
for $(x,y,t,\tau)\in E$. 
Then 
\begin{equation}
\label{eq:3.2}
	\begin{array}{ll}
	\displaystyle{H(x,y,t)=\int_0^t f(x,y,t,\tau)\,\dee\tau}, \quad 
	& (x,y,t)\in D,\vspace{7pt}\\
	\displaystyle{G(x,y,t)=G_0\left(x,y,\frac{t}{\epsilon}\right)+\frac{1}{\delta}\int_0^t f(x,y,t,\tau)\,\dee\tau}>0,\quad 
	& (x,y,t)\in D,\vspace{7pt}\\
	G(x,y,t)=G(y,x,t),\quad 
	& (x,y,t)\in D,\vspace{9pt}\\
	\epsilon\partial_t f(x,y,t,\tau)-\Delta_x f(x,y,t,\tau)=0,\quad  
	& (x,y,t,\tau)\in E,\vspace{7pt}\\
	\displaystyle{\lim_{\tau\to t^-}}f(x,y,t,\tau)=0,\quad 
	& (x,y,t)\in D. 
	\end{array}
\end{equation}
The second and third relations in \eqref{eq:3.2} implies assertion~(1). 
Since
$$
	(\epsilon\partial_t-\Delta_x)G_0\left(x,y,\frac{t}{\epsilon}\right)=0,\quad (x,y,t)\in D,
$$
it follows from the second, fourth, and fifth relations in \eqref{eq:3.2} that
\begin{equation}
\label{eq:3.3}
	\epsilon\partial_t G(x,y,t)-\Delta_x G(x,y,t)
 	=\frac{1}{\delta}\int_0^t (\epsilon\partial_t-\Delta_x)f(x,y,t,\tau)\,\dee\tau=0
 \end{equation}
for $(x,y,t)\in D$. 
It also follows from \eqref{eq:1.2} that
	\begin{align*}
 	& \partial_t f(x,y,t,\tau)-\frac{k}{\delta}\Delta_x' f(x,y,t,\tau)-\frac{1}{\delta}\partial_{x_N}f(x,y,t,\tau)
	\\
 	& =\left(-2+2\frac{k\epsilon}{\delta}\right)
	\partial_t\Gamma_{N-1}\left(x'-y',\frac{t-\tau}{\epsilon}+\frac{k}{\delta}\tau\right)
	\partial_{x_N}\Gamma_1\left(x_N+y_N+\frac{\tau}{\delta},\frac{t-\tau}{\epsilon}\right)
	\\
	& \quad
	-2\Gamma_{N-1}\left(x'-y',\frac{t-\tau}{\epsilon}+\frac{k}{\delta}\tau\right)
	\partial_t\partial_{x_N}\Gamma_1\left(x_N+y_N+\frac{\tau}{\delta},\frac{t-\tau}{\epsilon}\right)
	\\
	& \quad
	+\frac{2}{\delta}\Gamma_{N-1}\left(x'-y',\frac{t-\tau}{\epsilon}+\frac{k}{\delta}\tau\right)
	\partial_{x_N}^2\Gamma_1\left(x_N+y_N+\frac{\tau}{\delta},\frac{t-\tau}{\epsilon}\right)
	\\
 	& =-\partial_\tau f(x,y,t,\tau)
	\end{align*}
for $(x,y,t,\tau)\in E$. 
This, together with \eqref{eq:1.3} and the fifth relation
in \eqref{eq:3.2}, implies that 
\begin{equation}
\label{eq:3.4}
	\begin{aligned}
 	& \left(\partial_t-\frac{k}{\delta}\Delta_x'-\frac{1}{\delta}\partial_{x_N}\right)\int_0^t f(x,y,t,\tau)\,\dee\tau
	\\
	& =-\int_0^t \partial_\tau f(x,y,t,\tau)\,\dee\tau
	\\
 	& =f(x,y,t,0)-\lim_{\tau\to t^-}f(x,y,t,\tau)=f(x,y,t,0)
	\\
 	& =-2\Gamma_{N-1}\left(x'-y',\frac{t}{\epsilon}\right)
	\partial_{x_N}\Gamma_1\left(x_N+y_N,\frac{t}{\epsilon}\right)
	=\partial_{x_N}G_0\left(x,y,\frac{t}{\epsilon}\right)
	\end{aligned}
\end{equation}
for $(x,y,t)\in\partial\Omega\times \overline{\Omega}\times(0,\infty)$. 
Since $G_0(x,y,t)=0$ for $(x,y,t)\in\partial\Omega\times \overline{\Omega}\times(0,\infty)$, 
by the second relation in \eqref{eq:3.2} and \eqref{eq:3.4}, we obtain 
\begin{equation}
\label{eq:3.5}
	\begin{aligned}
 	& \delta\partial_t G(x,y,t)-k\Delta_x'G(x,y,t)-\partial_{x_N}G(x,y,t)
	\\
 	& =-\partial_{x_N}G_0\left(x,y,\frac{t}{\epsilon}\right)
	+\left(\partial_t-\frac{k}{\delta}\Delta_x'-\frac{1}{\delta}\partial_{x_N}\right)\int_0^t f(x,y,t,\tau)\,\dee\tau
	=0
 	\end{aligned}
\end{equation}
for $(x,y,t)\in \partial\Omega\times \overline{\Omega}\times(0,\infty)$. 
Therefore, by \eqref{eq:3.3} and \eqref{eq:3.5}, we obtain assertion~(2). 

Next, we prove assertion~(3). 
Let $(x,t)\in \overline{\Omega}\times(0,\infty)$.
By property~($\Gamma$1), 
\eqref{eq:1.3}{,} and the fact that $\Gamma_1(\xi,t)=\Gamma_1(-\xi,t)$ for $(\xi,t)\in{\mathbb R}\times(0,\infty)$, 
we have
\begin{equation}
\label{eq:3.6}
	\begin{aligned}
 	& \int_\Omega G_0\left(x,y,\frac{t}{\epsilon}\right)\,\dee y
	+\int_{\partial\Omega} G_0\left(x,y,\frac{t}{\epsilon}\right)\,\dee \sigma(y)
	\\
 	& =\int_0^\infty\left(\Gamma_1\left(x_N-y_N,\frac{t}{\epsilon}\right)
	-\Gamma_1\left(x_N+y_N,\frac{t}{\epsilon}\right)\right)\,\dee y_N
 	=\int_{-x_N}^{x_N}\Gamma_1\left(\xi,\frac{t}{\epsilon}\right)\,\dee\xi.
 	\end{aligned}
\end{equation}
It follows from property ($\Gamma$1) and \eqref{eq:1.8} that
\begin{equation}
\label{eq:3.7}
	\begin{aligned}
 	\frac{1}{\delta}\int_{\Omega}H(x,y,t)\,\dee y
  	& =-\frac{2}{\delta}\int_0^t\int_0^\infty 
	\partial_{y_N}\Gamma_1\left(x_N+y_N+\frac{\tau}{\delta},\frac{t-\tau}{\epsilon}\right)\,\dee y_N\,\dee\tau
	\\
  	& =\frac{2}{\delta}\int_0^t \Gamma_1\left(x_N+\frac{\tau}{\delta},\frac{t-\tau}{\epsilon}\right)\,\dee\tau
	\\
  	& =\frac{2}{\delta}\int_0^t \left(\frac{4\pi(t-\tau)}{\epsilon}\right)^{-\frac{1}{2}}
  	\exp\left(-\frac{\epsilon(x_N+\tau/\delta)^2}{4(t-\tau)}\right)\,\dee\tau
  	\le \frac{C\epsilon^{\frac{1}{2}}}{\delta}t^{\frac{1}{2}}
	\end{aligned}
\end{equation}
and
\begin{equation}
\label{eq:3.8}
	\begin{aligned}
	& \frac{1}{\epsilon}\int_{\partial\Omega}H(x,y,t)\,\dee\sigma(y)\\
 	& =-\frac{2}{\epsilon}\int_0^t
	\partial_{x_N}\Gamma_1\left(x_N+\frac{\tau}{\delta},\frac{t-\tau}{\epsilon}\right)\,\dee\tau
	\\
 	& =\int_0^t\left(\frac{4\pi(t-\tau)}{\epsilon}\right)^{-\frac{1}{2}}\frac{x_N+\tau/\delta}{t-\tau}
	\exp\left(-\frac{\epsilon(x_N+\tau/\delta)^2}{4(t-\tau)}\right)\,\dee\tau.
	\end{aligned}
\end{equation}
Since 
$$
	t^{-\frac{1}{2}}\dee\xi
	=\left(\frac{1}{\delta}(t-\tau)^{-\frac{1}{2}}+\frac{1}{2}(t-\tau)^{-\frac{3}{2}}
	\left(x_N+\frac{\tau}{\delta}\right)\right)\,\dee\tau
	\quad
	\mbox{for}
	\quad
	\xi:=\left(\frac{t}{t-\tau}\right)^{\frac{1}{2}}\left(x_N+\frac{\tau}{\delta}\right),
$$
by \eqref{eq:3.7} and \eqref{eq:3.8}, we have
\[
	\begin{aligned}
 	& \frac{1}{\delta}\int_\Omega H(x,y,t)\,\dee y+\frac{1}{\epsilon}\int_{\partial\Omega}H(x,y,t)\,\dee\sigma(y)
	\\
 	& =2\left(\frac{4\pi}{\epsilon}\right)^{-\frac{1}{2}}\int_0^t 
 	\left(\frac{1}{\delta}(t-\tau)^{-\frac{1}{2}}+\frac{1}{2}(t-\tau)^{-\frac{3}{2}}\left(x_N+\frac{\tau}{\delta}\right)\right)
 	\exp\left(-\frac{\epsilon(x_N+\tau/\delta)^2}{4(t-\tau)}\right)\,\dee\tau
	\\
 	& =2\left(\frac{4\pi t}{\epsilon}\right)^{-\frac{1}{2}}
 	\int_{x_N}^\infty\exp\left(-\frac{\xi^2}{4\epsilon t}\right)\,\dee\xi
 	=\left(\int_{-\infty}^{-x_N}+\int_{x_N}^\infty \right)\Gamma_1\left(\xi,\frac{t}{\epsilon}\right)\,\dee\xi.
	\end{aligned}
\]
This, together with \eqref{eq:1.8} and \eqref{eq:3.6}, implies that 
\[
	\begin{aligned}
	& \int_\Omega G(x,y,t)\,\dee y+\frac{\delta}{\epsilon}\int_{\partial\Omega} G(x,y,t)\,\dee \sigma(y)
	\\
	& =\int_\Omega G_0(x,y,t)\,\dee y+\frac{1}{\delta}\int_\Omega H(x,y,t)\,\dee y
	+\frac{1}{\epsilon}\int_{\partial\Omega}H(x,y,t)\,\dee\sigma(y)
	\\
	& =\int_{-\infty}^\infty \Gamma_1\left(\xi,\frac{t}{\epsilon}\right)\,\dee\xi=1,
	\quad (x,t)\in \overline{\Omega}\times(0,\infty).
	\end{aligned}
\]
Hence, assertion~(3) holds. 

Next, we prove assertion~(5).
Let $u$ be defined by \eqref{eq:1.10}, where $(\phi,\psi)\in BC(\Omega)\times BC(\partial\Omega)$. 
Assertion~(2) ensures that $u \in C^{2;1}(\overline{\Omega}\times(0,\infty))$ 
and that $u$ satisfies pointwise the first and second equations in problem~\eqref{eq:HDD} 
(resp.~problem~\eqref{eq:HD}) when $k>0$ (resp.~$k=0$).
Moreover, by assertion~(3), $u$ is bounded in $\overline{\Omega}\times[0,\infty)$. 
In addition, for any $x\in\Omega$ and $R>0$, 
since
\[
	\lim_{t\to 0^+}\int_{B^+_R(x)} G_0(x,y,t)\,\dee y=1,
\]
it follows from assertion~(3) that
\begin{equation}
\label{eq:3.9}
	\lim_{t\to 0^+}\left[\int_{\Omega\setminus B^+_R(x)}G_0\left(x,y,\frac{t}{\epsilon}\right)\,dy
	+\frac{1}{\delta}\int_\Omega H(x,y,t)\,\dee y+\frac{1}{\epsilon}\int_{\partial\Omega}H(x,y,t)\,\dee\sigma(y)\right]=0.
\end{equation}
Then, by assertion~(3) and \eqref{eq:3.9}, for any $x\in\Omega$ and $R>0$, 
we obtain
\[
	\begin{aligned}
	& \left|\int_\Omega G(x,y,t)\phi(y)\,\dee y-\phi(x)\right|
	\\
 	& =\left|\int_\Omega G(x,y,t)(\phi(y)-\phi(x))\,\dee y\right|
	+\frac{\delta}{\epsilon}\left|\phi(x)\int_{\partial\Omega} G(x,y,t)\,\dee \sigma(y)\right|
	\\
 	& \le \sup_{y\in B^+_R(x)}|\phi(y)-\phi(x)|\int_{B^+_R(x)} G(x,y,t)\,\dee y
	\\
 	& \qquad\quad
 	+2\|\phi\|_{L^\infty(\Omega)}\int_{\Omega\setminus B^+_R(x)} G(x,y,t)\,\dee y
	+\frac{1}{\epsilon}|\phi(x)|\int_{\partial\Omega} H(x,y,t)\,\dee \sigma(y)
	\\
	& \le \sup_{y\in B^+_R(x)}|\phi(y)-\phi(x)|+o(1) \quad\mbox{as}\quad t\to 0^+.
	\end{aligned}
\]
Furthermore, it follows from \eqref{eq:3.9} that
\[
	\left|\frac{1}{\epsilon}\int_{\partial\Omega}H(x,y,t)\psi(y)\,\dee\sigma(y)\right|
	\le\|\psi\|_{L^\infty(\partial\Omega)}
	\left|\frac{1}{\epsilon}\int_{\partial\Omega}H(x,y,t)\,\dee\sigma(y)\right|\to 0
	\quad\textrm{as}\quad t\to 0^+.
\]
Since $R$ is arbitrary, 
by the continuity of $\phi$, we deduce that 
\[
	\lim_{t\to 0^+}u(x,t)=\lim_{t\to 0^+}\int_\Omega G(x,y,t)\phi(y)\,\dee y=\phi(x),\quad x\in\Omega.
\]

In contrast, for any $x=(x',0)\in\partial\Omega$ and $R>0$, 
by \eqref{eq:3.6}, \eqref{eq:3.7}, \eqref{eq:3.8}, and assertion~(3), we obtain
\begin{equation}
\label{eq:3.10}
	\lim_{t\to 0^+}\int_\Omega G(x,y,t)\,\dee y=\frac{1}{\delta}\lim_{t\to 0^+}\int_\Omega H(x,y,t)\,\dee y=0
\end{equation}
and 
\begin{equation}
\label{eq:3.11}
	\begin{aligned}
 	& \frac{1}{\epsilon}\int_{\partial\Omega\setminus B'_R(x)} H(x,y,t)\,\dee\sigma(y)
	\\
 	& =-\frac{2}{\epsilon}
 	\int_0^t \left(\int_{{\mathbb R}^{N-1}\setminus B'_R(0)}
	\Gamma_{N-1}\left(y',\frac{t-\tau}{\epsilon}+\frac{k}{\delta}\tau\right)\,\dee y'\right)
	\partial_{x_N}\Gamma_1\left(x_N+\frac{\tau}{\delta},\frac{t-\tau}{\epsilon}\right)\,\dee\tau\,\biggr|_{x_N=0}
	\\
 	& =\frac{o(1)}{\epsilon}
 	\int_0^t \partial_{x_N}\Gamma_1\left(x_N+\frac{\tau}{\delta},\frac{t-\tau}{\epsilon}\right)\,\dee\tau\,\biggr|_{x_N=0}
 	=\frac{o(1)}{\epsilon}\int_{\partial\Omega}H(x,y,t)\,\dee\sigma(y)
 	=o(1) 
	\end{aligned}
\end{equation}
as $t\to 0^+$.
Then, by assertion~(3), \eqref{eq:3.10}, and \eqref{eq:3.11},
for any $x=(x',0)\in\partial\Omega$ and $R>0$, 
we have
\[
	\begin{aligned}
 	& \left|\frac{\delta}{\epsilon}\int_{\partial\Omega} G(x,y,t)\psi(y)\,\dee \sigma(y)-\psi(x)\right|
	\\
 	& \le|\psi(x)|\int_\Omega G(x,y,t)\,\dee y
	+\frac{1}{\epsilon}\left|\int_{\partial\Omega} H(x,y,t)(\psi(y)-\psi(x))\,\dee\sigma(y)\right|
	\\
 	& \le|\psi(x)|\int_\Omega G(x,y,t)\,\dee y
 	+\frac{2}{\epsilon}\|\psi\|_{L^\infty(\Omega)}\int_{\partial\Omega\setminus B'_R(x)} H(x,y,t)\,\dee\sigma(y)
	\\
 	& \qquad\quad
 	+\frac{1}{\epsilon}\sup_{y\in B'_R(x)}|\psi(x)-\psi(y)|\int_{B'_R(x)} H(x,y,t)\,\dee\sigma(y)
	\\
 	& \le \frac{1}{\epsilon}\sup_{y\in B'_R(x)}|\psi(x)-\psi(y)|+o(1)\quad\mbox{as}\quad t\to 0^+.
	\end{aligned}
\]
Furthermore, it follows from \eqref{eq:3.10} that
\[
	\left|\int_{\Omega}G(x,y,t)\phi(y)\,\dee y\right|\le\frac{1}{\delta}\|\phi\|_{L^\infty(\Omega)}\int_{\Omega}H(x,y,t)\,\dee y
	\to 0\quad\textrm{as}\quad t\to 0^+
\]
for $x\in\partial\Omega$.
Since $R$ is arbitrary, 
by the continuity of $\psi$, we deduce that 
\[
	\lim_{t\to 0^+}u(x,t)
	=\frac{\delta}{\epsilon}\lim_{t\to 0^+}\int_{\partial\Omega} G(x,y,t)\psi(y)\,\dee y
	=\psi(x),\quad x\in\partial\Omega.
\]
Then, combining the above arguments,
we conclude that $u$ is a bounded classical solution to problem~\eqref{eq:HDD} (resp.~problem~\eqref{eq:HD})
if $k>0$ (resp.~$k=0$).
Since the uniqueness of bounded classical solutions to problems~\eqref{eq:HDD} and \eqref{eq:HD} 
has been established in Proposition~\ref{Proposition:2.1}, assertion~(5) follows.
Moreover, assertion~(4) is an immediate consequence of assertion~(2) and~(5).
In fact, since $G(\cdot,y,s)\in BC(\overline\Omega)$ for any fixed $y\in\overline\Omega$ and $s>0$,
by assertion~(5) we see that
\[
	v(x,t):=\int_\Omega G(x,z,t)G(z,y,s)\,\dee z
	+\frac{\delta}{\epsilon}\int_{\partial\Omega}G(x,z,t)G(z,y,s)\,\dee\sigma(z)
\]
is the unique bounded classical solution to problem~\eqref{eq:HDD} 
$($resp.~problem~\eqref{eq:HD}$)$ when $k>0$ $($resp.~$k=0$$)$
with $(\phi(x),\psi(x))=(G(x,y,s), G(x,y,s)|_{\partial\Omega})$.
On the other hand, it follows from assertion~(2) that 
$G(x,y,t+s)$ is also a bounded classical solution to problem~\eqref{eq:HDD} 
$($resp.~problem~\eqref{eq:HD}$)$ when $k>0$ $($resp.~$k=0$$)$
with $G(x,y,t+s)|_{t=0}=G(x,y,s)$ for $x\in\overline\Omega$.
These imply that $v(x,t)=G(x,y,t+s)$ for $(x,t)\in \overline\Omega\times(0,\infty)$,
and assertion~(4) holds.
Thus, the proof of Theorem~\ref{Theorem:1.1} is complete. 
$\Box$
\vspace{5pt}

Next, we prove Theorem~\ref{Theorem:1.2}.
Note that the proof is not applicable to the case $k=0$.
\vspace{5pt}
\newline
{\bf Proof of Theorem~\ref{Theorem:1.2}.}
Let $\epsilon>0$, $\delta>0$, and $k>0$ be fixed.
Let $\Lambda$, $\lambda$ be defined as in Theorem~\ref{Theorem:1.2}. 
Since
\[
	\frac{\lambda}{\epsilon\delta} t\le \frac{t-\tau}{\epsilon}+\frac{k}{\delta}\tau
	\le \frac{\Lambda}{\epsilon\delta} t,
	\quad 
	\tau\in(0,t),
\]
it follows that 
\[
	C^{-1}\Gamma_{N-1}\left(x'-y',\frac{\lambda}{\epsilon\delta} t\right)
	\le \Gamma_{N-1}\left(x'-y',\frac{t-\tau}{\epsilon}+\frac{k}{\delta}\tau\right)
	\le C\Gamma_{N-1}\left(x'-y',\frac{\Lambda}{\epsilon\delta} t\right)
\]
for $(x,y,t,\tau)\in E$. 
This, together with \eqref{eq:1.8} and \eqref{eq:3.1}, implies that 
\begin{equation}
\label{eq:3.12}
	C^{-1}\Gamma_{N-1}\left(x'-y',\frac{\lambda}{\epsilon\delta} t\right) J(x,y,t)
	\le H(x,y,t)
	\le  C\Gamma_{N-1}\left(x'-y',\frac{\Lambda}{\epsilon\delta} t\right) I(x,y,t)
\end{equation}
for $(x,y,t)\in D$, where 
\begin{equation}
\label{eq:3.13}
	I(x,y,t):=\int_0^t g(x,y,t,\tau)\,\dee\tau,\qquad 
	J(x,y,t):=\int_{t/2}^t g(x,y,t,\tau)\,\dee\tau.
\end{equation}
\underline{Step 1.} 
We prove \eqref{eq:1.11} for 
\[
	(x,y,t)\in D_1=\left\{(x,y,t)\in D\,:\,\epsilon(x_N+y_N)^2<6 t,\,\,\,t<\frac{12\delta^2}{\epsilon}\right\}.
\]
Setting $z_N:=x_N+y_N+t/\delta$,
we have  
\[
	\left(x_N+y_N+\frac{\tau}{\delta}\right)^2
	=\left(x_N+y_N+\frac{t}{\delta}-\frac{t-\tau}{\delta}\right)^2
	=z_N^2-\frac{2(t-\tau)}{\delta}z_N+\frac{(t-\tau)^2}{\delta^2},
\]
which, together with \eqref{eq:3.1} and \eqref{eq:3.13}, implies that
\begin{equation}
\label{eq:3.14}
\begin{aligned}
 	& I(x,y,t)
	\\
 	& =\epsilon\exp\left(\frac{\epsilon z_N}{2\delta}\right)
	\int_0^t\left(\frac{4\pi(t-\tau)}{\epsilon}\right)^{-\frac{1}{2}}\bigg(\frac{z_N}{t-\tau}-\frac{1}{\delta}\bigg)
 	\exp\left(-\frac{\epsilon z_N^2}{4(t-\tau)}-\frac{\epsilon(t-\tau)}{4\delta^2}\right)\,\dee\tau
	\\
 	& =\epsilon(4\pi)^{-\frac{1}{2}}\exp\left(\frac{\epsilon z_N}{2\delta}\right)\int_0^{t/(\epsilon z_N^2)}
 	\xi^{-\frac{3}{2}}\left(1-\frac{\epsilon z_N}{\delta}\xi\right)
 	\exp\left(-\frac{1}{4\xi}-\frac{\epsilon^2 z_N^2}{4\delta^2}\xi\right)\,\dee\xi.
\end{aligned}
\end{equation}
Similarly, we obtain 
\begin{equation}
\label{eq:3.15}
	J(x,y,t) 
	=\epsilon(4\pi)^{-\frac{1}{2}}\exp\left(\frac{\epsilon z_N}{2\delta}\right)\int_0^{t/(2\epsilon z_N^2)}
 	\xi^{-\frac{3}{2}}\left(1-\frac{\epsilon z_N}{\delta}\xi\right)
 	\exp\left(-\frac{1}{4\xi}-\frac{\epsilon^2 z_N^2}{4\delta^2}\xi\right)\,\dee\xi.
\end{equation}
It follows from the definition of $D_1$ that 
\begin{equation}
\label{eq:3.16}
	\frac{t}{\delta}\le z_N=x_N+y_N+\frac{t}{\delta}\le\sqrt{\frac{6t}{\epsilon}}+\frac{t}{\delta}\le C,\qquad
	\frac{\epsilon z_N^2}{t}\le C,
\end{equation}
for $(x,y,t)\in D_1$. 
By \eqref{eq:3.14} and \eqref{eq:3.16}, we obtain
\begin{equation}
\label{eq:3.17}
 	I(x,y,t)\le C\int_0^\infty\xi^{-\frac{3}{2}}\exp\left(-\frac{1}{4\xi}\right)\,\dee\xi\le C, \quad (x,y,t)\in D_1.
\end{equation}
Furthermore, it follows from \eqref{eq:3.16} that 
\[
	1-\frac{\epsilon z_N}{\delta}\xi
	\ge 1-\frac{t}{2\delta z_N}\ge\frac{1}{2}
	\quad\mbox{for}\quad\xi\in\bigg(0,\frac{t}{2\epsilon z_N^2}\bigg),
\]
which, together with \eqref{eq:3.15} and \eqref{eq:3.16}, implies that
\begin{equation}
\label{eq:3.18}
	\begin{aligned}
	J(x,y,t) 
	& \ge C\int_0^{t/(2\epsilon z_N^2)}\xi^{-\frac{3}{2}}
	\exp\left(-\frac{1}{4\xi}-\frac{\epsilon^2 z_N^2}{4\delta^2}\xi\right)\,\dee\xi
	\\
 	& \ge C\int_0^{t/(2\epsilon z_N^2)}\xi^{-\frac{3}{2}}\exp\left(-\frac{1}{4\xi}-\frac{\epsilon t}{8\delta^2}\right)\,\dee\xi
	\ge C, \quad (x,y,t)\in D_1.
	\end{aligned}
\end{equation}
Combining \eqref{eq:3.12}, \eqref{eq:3.13}, \eqref{eq:3.17}, and \eqref{eq:3.18}, 
we obtain \eqref{eq:1.11} for $(x,y,t)\in D_1$. 
\vspace{5pt}
\newline
\underline{Step 2.} 
We prove \eqref{eq:1.11} for 
\[
	(x,y,t)\in D_2=\left\{(x,y,t)\in D\,:\,\epsilon(x_N+y_N)^2<6 t,\,\,\,t\ge \frac{12\delta^2}{\epsilon}\right\}.
\]
For any $\tau\in(0,t)$, 
we find a unique $\tau_*\in(0,t/2]$ such that
\begin{equation}
\label{eq:3.19}
	t-\tau_*=\frac{\epsilon(x_N+y_N+\tau_*/\delta)^2}{6}.
\end{equation}
Indeed, setting
\[
	F(\tau):=\frac{\epsilon(x_N+y_N+\tau/\delta)^2}{6}-(t-\tau),
\]
we see that $F(0)<0$ by $(x,y,t)\in D_2$ and $F$ is strictly increasing in $[0,\infty)$.
Furthermore, since $x_N+y_N\ge0$, we have
\[
	F\left(\frac{t}{2}\right)
	\ge \frac{\epsilon t^2}{24\delta^2}-\frac{t}{2}=\frac{t}{2}\left(\frac{\epsilon t}{12\delta^2}-1\right)\ge0
\]
for $t\ge 12\delta^2/\epsilon$.
These, together with the intermediate value theorem, imply the unique existence of 
$\tau_*\in(0,t/2]$ satisfying \eqref{eq:3.19}. 
Then
\begin{equation}
\label{eq:3.20}
	\begin{aligned}
	&
	\frac{\epsilon(x_N+y_N+\tau/\delta)^2}{6}
	\le
	\frac{\epsilon(x_N+y_N+\tau_*/\delta)^2}{6}=t-\tau_*\le t-\tau\qquad \mbox{if}\quad\tau\in(0,\tau_*],
	\\
	&
	t-\tau\le t-\tau_*=\frac{(x_N+y_N+\tau_*/\delta)^2}{6}
	\le \frac{\epsilon(x_N+y_N+\tau/\delta)^2}{6}
	\qquad\,\,\mbox{if}\quad\tau\in[\tau_*,t).
	\end{aligned}
\end{equation}
On the other hand, it follows from \eqref{eq:3.1} that  
\[
	\partial_tg(x,y,t,\tau)
	=\left(-6+\frac{\epsilon(x_N+y_N+\tau/\delta)^2}{t-\tau}\right)
	\frac{\epsilon(x_N+y_N+\tau/\delta)}{4(t-\tau)^2}
	\Gamma_1\left(x_N+y_N+\frac{\tau}{\delta},\frac{t-\tau}{\epsilon}\right).
\]
Then we see that
\begin{equation} 
\label{eq:3.21}
	\begin{aligned}
 	& \mbox{ $g(x,y,t,\tau)$ is increasing for 
  	$\displaystyle{t\in\left(\tau,\tau+\frac{\epsilon(x_N+y_N+\tau/\delta)^2}{6}\right)}$};
	\\
 	& \mbox{$g(x,y,t,\tau)$ is decreasing for 
  	$\displaystyle{t\in\left(\tau+\frac{\epsilon(x_N+y_N+\tau/\delta)^2}{6},\infty\right)}$}.
	\end{aligned}
\end{equation}
This, together with \eqref{eq:3.1} and {\eqref{eq:3.20}}, {implies} that
\begin{equation}
\label{eq:3.22}
	\begin{aligned}
 	& g(x,y,t,\tau)\le g(x,y,\tau+t-\tau_*,\tau)
	\\
 	& =-2\partial_{x_N}\Gamma_1\left(x_N+y_N+\frac{\tau}{\delta},\frac{t-\tau_*}{\epsilon}\right)
 	=-2\delta\partial_\tau \Gamma_1\left(x_N+y_N+\frac{\tau}{\delta},\frac{t-\tau_*}{\epsilon}\right).
	\end{aligned}
\end{equation}
By \eqref{eq:3.13} and \eqref{eq:3.22}, we obtain
\begin{equation}
\label{eq:3.23}
	\begin{aligned}
	I(x,y,t)
	&
	\le -2\delta\int_0^t
	\partial_\tau\Gamma_1\left(x_N+y_N+\frac{\tau}{\delta},\frac{t-\tau_*}{\epsilon}\right)\, \dee\tau
	\\
	&
	\le 2\delta\Gamma_1\left(x_N+y_N,\frac{t-\tau_*}{\epsilon}\right)
	\le 2\delta\left(\frac{t}{t-\tau_*}\right)^{\frac{1}{2}}\Gamma_1\left(x_N+y_N,\frac{t}{\epsilon}\right)\\
	 & \le C\Gamma_1\left(x_N+y_N,\frac{t}{\epsilon}\right).
	\end{aligned}
\end{equation}
In contrast, it follows from \eqref{eq:3.1} that 
\[
	\begin{aligned}
	 g(x,y,t,\tau) 
	& 
	= \left(\frac{t}{t-\tau}\right)^{\frac{3}{2}}
	\left(\frac{4\pi t}{\epsilon}\right)^{-\frac{1}{2}}
	\frac{\epsilon(x_N+y_N+\tau/\delta)}{t}\exp\left(-\frac{\epsilon(x_N+y_N+\tau/\delta)^2}{4(t-\tau)}\right)
	\\
	& 
	\ge \left(\frac{4\pi t}{\epsilon}\right)^{-\frac{1}{2}}
	\frac{\epsilon(x_N+y_N+\tau/\delta)}{t}\exp\left(-\frac{\epsilon(x_N+y_N+\tau/\delta)^2}{2t}\right)
	\\
	 & = -C\partial_{x_N}\Gamma_1\left(x_N+y_N+\frac{\tau}{\delta},\frac{t}{2\epsilon}\right)
	 = -C\delta\partial_\tau\Gamma_1\left(x_N+y_N+\frac{\tau}{\delta},\frac{t}{2\epsilon}\right)
	\end{aligned}
\]
for $0<\tau\le t/2$. 
This, together with \eqref{eq:3.13}, implies that
\begin{equation}
\label{eq:3.24}
	\begin{aligned}
	J(x,y,t)
	&
	\ge
	-C\delta\int_0^{t/2}\partial_\tau
	\Gamma_1\left(x_N+y_N+\frac{\tau}{\delta},\frac{t}{2\epsilon}\right)\, \dee\tau
	\\
	&
	=C\delta
	\bigg(\Gamma_1\left(x_N+y_N,\frac{t}{2\epsilon}\right)
	-\Gamma_1\left(x_N+y_N+\frac{t}{2\delta},\frac{t}{2\epsilon}\right)\bigg)
	\\
	&
	=C\delta
	\left(1-\exp\left(-\epsilon\frac{4\delta(x_N+y_N)+t}{8\delta^2}\right)\right)
	\Gamma_1\left(x_N+y_N,\frac{t}{2\epsilon}\right).
	\end{aligned}
\end{equation}
Since $x_N+y_N\ge0$ and $t\ge 12\delta^2/\epsilon$ by $(x,y,t)\in D_2$, 
we observe from \eqref{eq:3.24}  that 
\[
	J(x,y,t)\ge C\Gamma_1\left(x_N+y_N,\frac{t}{2\epsilon}\right).
\]
This, together with \eqref{eq:3.12} and \eqref{eq:3.23}, implies \eqref{eq:1.11} for $(x,y,t)\in D_2$.
\vspace{5pt}
\newline
\underline{Step 3.} 
Let $(x,y,t)\in D_3\cup D_4$, where 
\[
	\begin{aligned}
 	& D_3=\left\{(x,y,t)\in D\,:\,\epsilon(x_N+y_N)^2\ge 6t,\,\,\, x_N+y_N+\frac{t}{\delta}
	<\frac{\delta}{\epsilon}\right\},\\
 	&D_4=\left\{(x,y,t)\in D\,:\,\epsilon(x_N+y_N)^2\ge 6t,\,\,\, x_N+y_N+\frac{t}{\delta}
	\ge \frac{\delta}{\epsilon}\right\}.
	\end{aligned}
\]
For any $\tau\in[0,t)$, since 
\[
	t\le \tau+t\le \tau+\frac{\epsilon(x_N+y_N)^2}{6}\le  \tau+\frac{\epsilon(x_N+y_N+\tau/\delta)^2}{6},
\]
by \eqref{eq:3.1} and \eqref{eq:3.21}, we have
\[
	\begin{aligned}
	g(x,y,t,\tau) & \le g(x,y,\tau+t,\tau)
	\\
 	& =-2\partial_{x_N}\Gamma_1\left(x_N+y_N+\frac{\tau}{\delta},\frac{t}{\epsilon}\right)
	=-2\delta\partial_\tau\Gamma_1\left(x_N+y_N+\frac{\tau}{\delta},\frac{t}{\epsilon}\right).
	\end{aligned}
\]
Then we obtain
\[
	\begin{aligned}
	I(x,y,t)
	&
	\le -2\delta
	\int_0^t \partial_\tau\Gamma_1\left(x_N+y_N+\frac{\tau}{\delta},\frac{t}{\epsilon}\right)\, \dee\tau
	\\
	& 
	\le 2\delta\left(\Gamma_1\left(x_N+y_N,\frac{t}{\epsilon}\right)
	-\Gamma_1\left(x_N+y_N+\frac{t}{\delta},\frac{t}{\epsilon}\right)\right)
	\\
	& 
	=2\delta\left(1-\exp\left(-\epsilon\frac{2\delta(x_N+y_N)+t}{4\delta^2}\right)\right)
	\Gamma_1\left(x_N+y_N,\frac{t}{\epsilon}\right)
	\\
	& 
	\le
	\begin{dcases}
	C\displaystyle{\left(x_N+y_N+\frac{t}{\delta}\right)\Gamma_1\left(x_N+y_N,\frac{t}{\epsilon}\right)}
	& \mbox{if}\quad x_N+y_N+\displaystyle{\frac{t}{\delta}}<\frac{\delta}{\epsilon},
	\vspace{5pt}\\
	2\delta\displaystyle{\Gamma_1\left(x_N+y_N,\frac{t}{\epsilon}\right)} 
	& \mbox{if}\quad x_N+y_N+\displaystyle{\frac{t}{\delta}}\ge \frac{\delta}{\epsilon}.
	\end{dcases}
	\end{aligned}
\]
Furthermore, 
by the same arguments as in \eqref{eq:3.24}, 
we have
\[
	\begin{aligned}
	J(x,y,t)
	 & \ge C\delta
	\left(1-\exp\left(-\epsilon\frac{4\delta(x_N+y_N)+t}{8\delta^2}\right)\right)
	\Gamma_1\left(x_N+y_N,\frac{t}{2\epsilon}\right)
	\\
 	& 
	\ge\left\{
	\begin{array}{ll}
	\displaystyle{C\left(x_N+y_N+\frac{t}{\delta}\right)\Gamma_1\left(x_N+y_N,\frac{t}{2\epsilon}\right) }
	& \mbox{if}\quad x_N+y_N+\displaystyle{\frac{t}{\delta}}<\frac{\delta}{\epsilon},
	\vspace{5pt}\\
	\displaystyle{C\Gamma_1\left(x_N+y_N,\frac{t}{2\epsilon}\right)} 
	& \mbox{if}\quad x_N+y_N+\displaystyle{\frac{t}{\delta}}\ge \frac{\delta}{\epsilon}.
	\end{array}
	\right.
	\end{aligned}
\]
Thus, \eqref{eq:1.11} holds for $(x,y,t)\in D_3\cup D_4$,
and the proof of Theorem~\ref{Theorem:1.2} is complete. 
$\Box$
\vspace{5pt}

As an application of Theorems~\ref{Theorem:1.1} and \ref{Theorem:1.2}, 
we prove Theorem~\ref{Theorem:1.3}.
\vspace{3pt}
\newline
{\bf Proof of Theorem~\ref{Theorem:1.3}.}
Let $(\phi,\psi)\in L^p(\Omega)\times L^p(\partial\Omega)$, where $p\in[1,\infty]$. 
Assertion~(4) immediately follows from Theorem~\ref{Theorem:1.1}\,(4).

We prove assertions~(1) and~(3).
By \cite{IKK25}*{Corollary~1.3}, it suffices to consider the case $k>0$.
By Theorem~\ref{Theorem:1.2} and \eqref{eq:1.3}, 
we have 
\begin{equation}
\label{eq:3.25}
	0<G(x,y,t)\le Ct^{-\frac{N}{2}},
\quad (x,y,t)\in D.
\end{equation}
If $1\le p<\infty$, by H\"older's inequality, Theorem~\ref{Theorem:1.1}\,(3), and \eqref{eq:3.25}, we obtain
\begin{equation}
\label{eq:3.26}
	\begin{aligned}
 	& \sup_{x\in\overline{\Omega}}|[\G(t)(\phi,\psi)](x)| 
	\\
 	& \le \sup_{x\in\overline{\Omega}}
 	\left(\int_\Omega G(x,y,t)|\phi(y)|^p\,\dee y\right)^{\frac{1}{p}}
 	+\frac{\delta}{\epsilon}\sup_{x\in\overline{\Omega}}\left(\int_{\partial\Omega} 
 	G(x,y,t)|\psi(y)|^p\,\dee\sigma(y)\right)^{\frac{1}{p}} 
	\\
  	& \le Ct^{-\frac{N}{2p}}\|(\phi,\psi)\|_{L^p(\Omega)\times L^p(\partial\Omega)}<\infty,\quad t>0.
\end{aligned}
\end{equation}
If $p=\infty$, by Theorem~\ref{Theorem:1.1}\,(3), we have 
\begin{equation}
\label{eq:3.27}
	\sup_{x\in\overline{\Omega}}|[\G(t)(\phi,\psi)](x)|
	\le \|(\phi,\psi)\|_{L^\infty(\Omega)\times L^\infty(\partial\Omega)}<\infty,\quad t>0.
\end{equation}
Furthermore, we see that
\begin{equation}
\label{eq:3.28}
	\|\G(t)\|_{p\to p}\le 1,\quad t>0, 
\end{equation}
for $p\in[1,\infty]$. 
In fact, by \eqref{eq:3.27}, 
we obtain \eqref{eq:3.28} for $p=\infty$. 
Moreover, it follows from Theorem~\ref{Theorem:1.1}~(3) that
\[
	\begin{aligned}
	& 
	\|\G(t)(\phi,\psi)\|_{L^1(\Omega)\times L^1(\partial\Omega)}
	\\
 	& 
	\le \int_\Omega\int_\Omega G(x,y,t)|\phi(y)|\,\dee y\,\dee x
	+\frac{\delta}{\epsilon}\int_\Omega\int_{\partial\Omega}G(x,y,t)|\psi(y)|\,\dee\sigma(y)\,\dee x
	\\
 	& \qquad
 	+ \int_{\partial\Omega}\int_\Omega G(x,y,t)|\phi(y)|\,\dee y\,\dee\sigma(x)
	+\frac{\delta}{\epsilon}\int_{\partial\Omega}\int_{\partial\Omega}G(x,y,t)|\psi(y)|\,\dee\sigma(y)\,\dee\sigma(x)
	\\
 	& 
	\le\int_\Omega\left(\int_\Omega G(x,y,t)\,\dee x
	+\frac{\delta}{\epsilon}\int_{\partial\Omega}G(x,y,t)\,\dee\sigma(x)\right)|\phi(y)|\,\dee y
	\\
 	& \qquad
 	+\int_{\partial\Omega}\left(\int_\Omega G(x,y,t)\,\dee x
	+\frac{\delta}{\epsilon}\int_{\partial\Omega}G(x,y,t)\,\dee\sigma(x)\right)|\psi(y)|\,\dee \sigma(y)\\
 	& 
	=\|\phi\|_{L^1(\Omega)}+\|\psi\|_{L^1(\partial\Omega)}
	=\|(\phi,\psi)\|_{L^1(\Omega)\times L^1(\partial\Omega)},
	\end{aligned}
\]
which implies that \eqref{eq:3.28} holds for $p=1$. 
Then the Riesz--Thorin interpolation theorem implies \eqref{eq:3.28} for $p\in[1,\infty]$. 
Applying the standard theory for convolutions, together with Theorem~\ref{Theorem:1.1} and \eqref{eq:3.28}, 
we see that $u$ is bounded and smooth on $\overline{\Omega}\times[T,\infty)$ 
for any $T>0$ and satisfies \eqref{eq:1.13} and \eqref{eq:1.14}.
Thus, assertion~(1) holds. 

On the other hand, 
it follows from \eqref{eq:3.26} and \eqref{eq:3.27} that
\[
	\|\G(t)(\phi,\psi)\|_{L^\infty(\Omega)}+\|\G(t)(\phi,\psi)\|_{L^\infty(\partial\Omega)}
	\le Ct^{-\frac{N}{2p}}\|(\phi,\psi)\|_{L^p(\Omega)\times L^p(\partial\Omega)}
\]
for $p\in[1,\infty]$ and $t>0$. 
This, together with \eqref{eq:3.28}, implies that
\[
	\begin{aligned}
 	& \|(\G(t)(\phi,\psi),\G(t)(\phi,\psi)|_{\partial\Omega})\|_{L^q(\Omega)\times L^q(\partial\Omega)}
	\\
 	 & \le\|\G(t)(\phi,\psi)\|_{L^q(\Omega)}+\|\G(t)(\phi,\psi)\|_{L^q(\partial\Omega)}
	\\
 	& 
 	\le \|\G(t)(\phi,\psi)\|_{L^\infty(\Omega)}^{1-\frac{p}{q}}\|\G(t)(\phi,\psi)\|_{L^p(\Omega)}^{\frac{p}{q}}
 	+\|\G(t)(\phi,\psi)\|_{L^\infty(\partial\Omega)}^{1-\frac{p}{q}}\|\G(t)(\phi,\psi)\|_{L^p(\partial\Omega)}^{\frac{p}{q}}
	\\
 	& 
 	\le Ct^{-\frac{N}{2}\left(\frac{1}{p}-\frac{1}{q}\right)}
 	\|(\phi,\psi)\|_{L^p(\Omega)\times L^p(\partial\Omega)}
	\end{aligned}
\]
for $p, q\in[1,\infty]$ with $p\le q$ and $t>0$. 
This implies that 
\begin{equation}
\label{eq:3.29}
	\|\G(t)\|_{p\to q}\le Ct^{-\frac{N}{2}\left(\frac{1}{p}-\frac{1}{q}\right)},
\quad t>0,
\end{equation}
for $p, q\in[1,\infty]$ with $p\le q$ and $t>0$. 

Define
\begin{equation}
\label{eq:3.30}
	\varphi(x,t):=-\partial_{x_N}\Gamma_N\left(x,\frac{t}{\epsilon}\right)
	=\frac{\epsilon x_N}{2t}\Gamma_N\left(x,\frac{t}{\epsilon}\right)\ge0
\end{equation}
for $(x,t)\in \overline{\Omega}\times(0,\infty)$. 
Since $\varphi(x,t)=0$ for $(x,t)\in \partial\Omega\times(0,\infty)$, 
for any $q\in[1,\infty]$, 
there exists $C>0$ such that 
\begin{equation}
\label{eq:3.31}
	C^{-1}t^{-\frac{N}{2}\left(1-\frac{1}{q}\right)-\frac{1}{2}}
	\le\|\varphi(t)\|_{L^q(\Omega)}
	=\|(\varphi(t),\varphi(t)|_{\partial\Omega})\|_{L^q(\Omega)\times L^q(\partial\Omega)}
	\le Ct^{-\frac{N}{2}\left(1-\frac{1}{q}\right)-\frac{1}{2}}
\end{equation}
for $t>0$. 
On the other hand, it follows from \eqref{eq:1.3} and property~($\Gamma$1) that 
\begin{equation}
\label{eq:3.32}
	\int_\Omega G_0\left(x,y,\frac{t_1}{\epsilon}\right)\varphi(y,t_2)\,\dee y=\varphi(x,t_1+t_2)
\end{equation}
for $x\in\overline{\Omega}$, $t_1>0$, and $t_2>0$.
Then, by \eqref{eq:3.30}, \eqref{eq:3.31}, and \eqref{eq:3.32}, we obtain
	\begin{align*}
	\|\G(t)(\varphi(t),\varphi(t)|_{\partial\Omega})\|_{L^q(\Omega)\times L^q(\partial\Omega)}
 	& \ge \left(\int_\Omega\left(\int_\Omega G_0\left(x,y,\frac{t}{\epsilon}\right)\varphi(y,t)\,\dee y\right)^q\,\dee x\right)^{1/q}
	\\
 	& =\|\varphi(2t)\|_{L^q(\Omega)}\ge Ct^{-\frac{N}{2}\left(1-\frac{1}{q}\right)-\frac{1}{2}}
	\end{align*}
for $t>0$ and $q\in[1,\infty]$.
This, together with \eqref{eq:3.31}, yields
\begin{equation}
\label{eq:3.33}
\begin{aligned}
	\|\G(t)\|_{p\to q}
	 & \ge \frac{\|(\G(t)(\varphi(t),\varphi(t)|_{\partial\Omega}),\G(t)(\varphi(t),\varphi(t)|_{\partial\Omega})|_{\partial\Omega})\|_{L^q(\Omega)\times L^q(\partial\Omega)}}
	 {\|(\varphi(t),\varphi(t)|_{\partial\Omega})\|_{L^p(\Omega)\times L^p(\partial\Omega)}}\\
	  \ge Ct^{-\frac{N}{2}\left(\frac{1}{p}-\frac{1}{q}\right)},\quad t>0, 
\end{aligned}
\end{equation}
for $p$, $q\in[1,\infty]$ with $p\le q$. 
Combining \eqref{eq:3.29} with \eqref{eq:3.33}, we obtain 
\[
	C^{-1}t^{-\frac{N}{2}\left(\frac{1}{p}-\frac{1}{q}\right)}
	\le \|\G(t)\|_{p\to q}\le Ct^{-\frac{N}{2}\left(\frac{1}{p}-\frac{1}{q}\right)},
	\quad t>0. 
\]
Furthermore, it follows from assertion~(4) that  
\[
	C\le \|\G(nt)\|_{p\to p}\le \|\G((n-1)t)\|_{p\to p}\|\G(t)\|_{p\to p}\le \|\G(t)\|_{p\to p}^n
\]
for $t>0$ and $n=2,3,\dots$, which implies $\|\G(t)\|_{p\to p}\ge 1$ for $t>0$.
This, together with \eqref{eq:3.28}, shows that $\|\G(t)\|_{p\to p}=1$ for $t>0$.
Therefore, assertion~(3) is proved. 

We prove assertion~(2). 
Replacing $\phi$ with $0$ and using Theorem~\ref{Theorem:1.1}\,(5), we obtain
\[
  	\lim_{t\to 0^+}\frac{1}{\epsilon}\int_{\partial\Omega}H(x,y,t)\psi(y)\,\dee\sigma(y)
	=\psi(x),\quad x\in\partial\Omega.
\]
Thus, it suffices to prove that
\begin{equation}
\label{eq:3.34}
	  \lim_{t\to 0^+}\int_\Omega H(x,y,t)\phi(y)\,\dee y=0,\quad x\in\partial\Omega,
\end{equation}
for $\phi\in L^p(\Omega)$ satisfying \eqref{eq:1.16}. 
If $k>0$, then, by Theorem~\ref{Theorem:1.2} and \eqref{eq:2.2}, we have
\begin{equation}
\label{eq:3.35}
	\begin{aligned}
  	& \left|\int_\Omega H(x,y,t)\phi(y)\,\dee y\right|
	\\
    & \le C\int_0^{(6t/\epsilon)^{1/2}}\int_{\mathbb{R}^{N-1}}\Gamma_{N-1}\left(x'-y',Ct\right)
	|\phi(y)|\,\dee y'\,\dee y_N\\
    &\qquad
    +C\int_{(6t/\epsilon)^\frac{1}{2}}^\infty\int_{\mathbb{R}^{N-1}} (y_N+t^\frac{1}{2})\Gamma_{N-1}\left(x'-y',Ct\right)
	\Gamma_1\left(y_N,\frac{t}{\epsilon}\right)|\phi(y)|\,\dee y'\,\dee y_N\\
    & \le Ct^{-\frac{N-1}{2p}}\int_0^{(6t/\epsilon)^{1/2}}\|\phi(\cdot,y_N)\|_{L^p(\mathbb{R}^{N-1})}\,\dee y_N\\
    & \qquad\
    +C\int_{(6t/\epsilon)^\frac{1}{2}}^\infty(y_N+t^\frac{1}{2})
	\Gamma_1\left(y_N,\frac{t}{\epsilon}\right)t^{-\frac{N-1}{2p}}
	\|\phi(\cdot,y_N)\|_{L^p(\mathbb{R}^{N-1})}\,\dee y_N\\
    & \le C\int_0^{(6t/\epsilon)^{1/2}}y_N^{-\frac{N-1}{p}}\|\phi(\cdot,y_N)\|_{L^p(\mathbb{R}^{N-1})}\,\dee y_N\\
    & \qquad
    -C\int_0^\infty \left(\frac{\eta^2}{t}\right)^{\frac{N-1}{2p}}
	\partial_{\eta}\left((\eta+t^\frac{1}{2})\Gamma_1\left(\eta,\frac{t}{\epsilon}\right)\right)\eta^{-\frac{N-1}{p}}
	\left(\int_0^\eta \|\phi(\cdot,y_N)\|_{L^p(\mathbb{R}^{N-1})}\,\dee y_N\right)\,\dee\eta\\
    & \le C\int_0^{(6t/\epsilon)^{1/2}}y_N^{-\frac{N-1}{p}}\|\phi(\cdot,y_N)\|_{L^p(\mathbb{R}^{N-1})}\,\dee y_N\\
    &\qquad
    +C\int_0^\infty \xi^\frac{N-1}{p}\left|\partial_\xi\left((\xi+1)\Gamma_1\left(\xi,\frac{1}{\epsilon}\right)\right)\right|
	  \biggr(\int_0^{t^\frac{1}{2}\xi} y_N^{-\frac{N-1}{p}}
	  \|\phi(\cdot,y_N)\|_{L^p(\mathbb{R}^{N-1})}\,\dee y_N\biggr)\,\dee\xi
	\end{aligned}
\end{equation}
for $(x,t)\in\partial\Omega\times(0,1)$.
This, together with \eqref{eq:1.16} and the Lebesgue dominated convergence theorem, implies \eqref{eq:3.34}. 
On the other hand, if $k=0$, we have
\[
	\begin{aligned}
	\left|\int_\Omega H(x,y,t)\phi(y)\,\dee y\right|
	& \le \left(\int_0^{(6t/\epsilon)^{1/2}}+\int_{(6t/\epsilon)^{1/2}}^\infty\right)
	\int_{\mathbb{R}^{N-1}}H(x,y,t)|\phi(y)|\,\dee y'\,\dee y_N\\
	&=:I_1(x,t)+I_2(x,t)
	\end{aligned}
\]
for $(x,t)\in\partial\Omega\times(0,\infty)$. 
By \cite{IKK25}*{Theorem~1.2} (see also Remark~\ref{Remark:1.3}) and \eqref{eq:1.16}, 
we obtain
\[
	\begin{aligned}
	I_1(x,t)
	&\le C\int_0^{(6t/\epsilon)^{1/2}}\int_{\mathbb{R}^{N-1}}
	P\left(x'-y',y_N+\frac{t}{\delta}\right)|\phi(y)|\,\dee y'\,\dee y_N\\
	&\le C\int_0^{(6t/\epsilon)^{1/2}}
	\left(y_N+\frac{t}{\delta}\right)^{-\frac{N-1}{p}}
	\|\phi(\cdot,y_N)\|_{L^p(\mathbb{R}^{N-1})}\,\dee y_N\\
	&\le C\int_0^{(6t/\epsilon)^{1/2}}
	y_N^{-\frac{N-1}{p}}\|\phi(\cdot,y_N)\|_{L^p(\mathbb{R}^{N-1})}\,\dee y_N
	\to 0
	\quad \text{as}\quad t\to 0^+.
	\end{aligned}
\]
Furthermore, by Theorem~\ref{Theorem:1.2}, we argue as in \eqref{eq:3.35} to obtain
\[
	\begin{aligned}
	 & I_2(x,t)
	=\int_{(6t/\epsilon)^{1/2}}^\infty\int_{\mathbb R^{N-1}}
	H(x,y,t)|\phi(y)|\,\dee y'\,\dee y_N\\
	&\le C\int_0^\infty\int_{\mathbb{R}^{N-1}}
	(y_N+t^\frac{1}{2})\Gamma_{N-1}\left(x'-y',Ct\right)
	\Gamma_1\left(y_N,\frac{t}{\epsilon}\right)|\phi(y)|\,\dee y'\,\dee y_N
	\to 0
	\quad \text{as}\quad t\to 0^+
	\end{aligned}
\]
for $x\in\partial\Omega$. 
These estimates imply that \eqref{eq:3.34} holds in the case $k=0$. 
Therefore, \eqref{eq:1.15} is proved for both cases $k>0$ and $k=0$, 
and assertion~(2) holds. 

It remains to prove assertion~(5). 
Let $\epsilon>0$ and $\delta>0$ be fixed. 
Define 
\begin{equation}
\label{eq:3.36}
	\begin{aligned}
 	& J_0(x,y,t,\tau)
	\\
  	& :=-2\left\{\Gamma_{N-1}\left(x'-y',\frac{t-\tau}{\epsilon}+\frac{k}{\delta}\tau\right)
	-\Gamma_{N-1}\left(x'-y',\frac{t-\tau}{\epsilon}\right)\right\}
	\\
 	& \qquad\times
	\partial_{x_N}\Gamma_1\left(x_N+y_N+\frac{\tau}{\delta},\frac{t-\tau}{\epsilon}\right)
	\\
 	& =-2\left(\int_0^{k\tau/\delta}\partial_\xi \Gamma_{N-1}\left(x'-y',\frac{t-\tau}{\epsilon}+\xi\right)\,\dee\xi\right)
	\partial_{x_N}\Gamma_1\left(x_N+y_N+\frac{\tau}{\delta},\frac{t-\tau}{\epsilon}\right)
	\end{aligned}
\end{equation}
for $(x,y,t,\tau)\in E$ and $k>0$. 
Let $T>0$ and $R>0$. 
Then, by \eqref{eq:2.1} and \eqref{eq:2.2}, we have
\begin{equation}
\label{eq:3.37}
	\begin{aligned}
 	& \frac{1}{\epsilon}\left|\int_0^t\int_{\partial\Omega}J_0(x,y,t,{\tau})\,\dee\sigma(y)\,\dee\tau\right|
	\le \frac{Ckt}{\epsilon \delta}\int_0^t \left(\frac{t-\tau}{\epsilon}\right)^{-2}
	\exp\left(-\frac{\epsilon(x_N+\tau/\delta)^2}{8(t-\tau)}\right)\,\dee\tau
	\\
     	&\le Ckt\left(\int_0^{t/2} \left(\frac{t}{2\epsilon}\right)^{-2}\,\dee\tau
	+\int_{t/2}^t \left(\frac{t-\tau}{\epsilon}\right)^{-2}
	\exp\left(-\frac{\epsilon R^2}{8(t-\tau)}\right)\,\dee\tau\right)\le Ck(1+t^2)\le Ck
	\end{aligned}
\end{equation}
for $(x,t)\in\overline{\Omega}\times(0,T]$ with $x_N+t/\delta\ge R$.
Similarly, by \eqref{eq:2.2} and \eqref{eq:3.36}, we obtain
\begin{equation}
\label{eq:3.38}
	\begin{aligned}
	&\frac{1}{\delta}\left|\int_0^t\int_\Omega J_0(x,y,t,\tau)\,\dee y\,\dee\tau\right|
 	 \le \frac{Ckt}{\delta^2}\int_0^t \left(\frac{t-\tau}{\epsilon}\right)^{-\frac{3}{2}}
	\exp\left(-\frac{\epsilon(x_N+\tau/\delta)^2}{16(t-\tau)}\right)\,\dee\tau
	\\
	&\le \frac{Ckt}{\delta^2}\left(\int_0^{t/2} \left(\frac{t}{2\epsilon}\right)^{-\frac{3}{2}}\,\dee\tau
	+\int_{t/2}^t \left(\frac{t-\tau}{\epsilon}\right)^{-\frac{3}{2}}
	\exp\left(-\frac{\epsilon R^2}{16(t-\tau)}\right)\,\dee\tau\right)\\
    	&\le Ck(t^{1/2}+t^2)\le Ck
	\end{aligned}
\end{equation}
for $(x,t)\in\overline{\Omega}\times(0,T]$ with $x_N+t/\delta\ge R$. 
By \eqref{eq:3.36}, \eqref{eq:3.37}, and \eqref{eq:3.38}, 
we obtain
\[
	\begin{aligned}
 	& |\uHDD{\epsilon}{\delta}{k}(x,t)-\uHD{\epsilon}{\delta}(x,t)|
	\\
 	& \le\frac{\|\phi\|_{L^\infty(\Omega)}}{\delta}\left|\int_0^t\int_\Omega J_0(x,y,t)\,\dee y\,\dee\tau\right|
	+\frac{\|\psi\|_{L^\infty(\partial\Omega)}}{\epsilon}
	\left|\int_0^t\int_{\partial\Omega} J_0(x,y,t)\,\dee\sigma(y)\,\dee\tau\right|
	\le Ck
	\end{aligned}
\]
for $(x,t)\in\overline{\Omega}\times(0,T]$ with $x_N+t/\delta\ge R$. 
Hence, assertion~(4) holds. This completes the proof of Theorem~\ref{Theorem:1.3}.
$\Box$
\begin{remark}
\label{Remark:3.1}
The convergence rate in Theorem~{\rm\ref{Theorem:1.3}}~{\rm (5)} is optimal.
Indeed, consider the case when 
$\phi=0$ in $\Omega$ and $\psi=\chi_{\partial\Omega\setminus B'_1(0)}$ on $\partial\Omega$.
Let $\epsilon>0$ and $\delta>0$ be fixed.
Then it follows from~\eqref{eq:1.12} that
\[
	\begin{aligned}
	 & \uHDD{\epsilon}{\delta}{k}(x,t)-\uHD{\epsilon}{\delta}(x,t)
	 \\
 	& =-\frac{2}{\epsilon}\int_{\partial\Omega}\int_0^t
	\left\{\Gamma_{N-1}\!\left(x'-y',\frac{t-\tau}{\epsilon}+\frac{k}{\delta}\tau\right)
	-\Gamma_{N-1}\!\left(x'-y',\frac{t-\tau}{\epsilon}\right)\right\}
	\\
 	& \qquad\qquad\quad
 	\times\partial_{x_N}\Gamma_1\!\left(x_N+\frac{\tau}{\delta},
 	\frac{t-\tau}{\epsilon}\right)\psi(y)\,\dee\tau\,\dee\sigma(y)
	\\
 	& =-\frac{2}{\epsilon}\int_{\partial\Omega\setminus B'_1(0)}\int_0^t\int_0^{k\tau/\delta}
  	\partial_{\xi}\Gamma_{N-1}\!\left(y',\frac{t-\tau}{\epsilon}+\xi\right)
    	\partial_{x_N}\Gamma_1\!\left(x_N+\frac{\tau}{\delta},\frac{t-\tau}{\epsilon}\right)
    	\,\dee\xi\,\dee\tau\,\dee\sigma(y)
	\end{aligned}
\]
for $(x,t)\in\overline{\Omega}\times(0,\infty)$.
In contrast, there exists $A>0$ such that
\begin{equation}
\label{eq:3.39}
	\begin{aligned}
	\partial_t\Gamma_{N-1}(x',t)
 	& =(4\pi t)^{-\frac{N-1}{2}}\left(-\frac{N-2}{2t}+\frac{|x'|^2}{4t^2}\right)
	\exp\left(-\frac{|x'|^2}{4t}\right)\\
 	& \ge Ct^{-\frac{N+3}{2}}|x'|^2\exp\!\left(-\frac{|x'|^2}{4t}\right)>0
	\end{aligned}
\end{equation}
for $x'\in{\mathbb R}^{N-1}\setminus B'_{At^{1/2}}(0)$ and $t>0$.
Let $T>0$ be such that
\[
	A\left(\frac{1}{\epsilon}+\frac{1}{\delta}\right)^{\frac{1}{2}}T^{\frac{1}{2}}<1.
\]
Then, for any $t\in(0,T)$, $\tau\in(0,t)$,  $\xi\in(0,k\tau/\delta)$, and $k\in(0,1]$,
it follows that
\[
	\begin{aligned}
 	& A\left(\frac{t-\tau}{\epsilon}+\xi\right)^{\frac{1}{2}}
 	\le A\left(\frac{t}{\epsilon}+\frac{t}{\delta}\right)^{\frac{1}{2}}<1,\\
 	& -\frac{2}{\epsilon}
	\partial_{\xi}\Gamma_{N-1}\!\left(y',\frac{t-\tau}{\epsilon}+\xi\right)
 	\partial_{x_N}\Gamma_1\!\left(x_N+\frac{\tau}{\delta},\frac{t-\tau}{\epsilon}\right)
 	\ge 0,
 	\quad y'\in \partial\Omega\setminus B'_1(0).
	\end{aligned}
\]
For any $R>0$, these imply that
\[
	\begin{aligned}
 	& \uHDD{\epsilon}{\delta}{k}(x,t)-\uHD{\epsilon}{\delta}(x,t)\\
 	& \ge C\int_{\partial\Omega\setminus B'_1(0)}\int_{t/4}^{t/2}
 	\frac{k\tau}{\delta}
 	t^{-\frac{N+3}{2}}|y'|^2\\
	 & \qquad\times
	 \left(\frac{t-\tau}{\epsilon}\right)^{-\frac{1}{2}}
	\frac{x_N+\tau/\delta}{(t-\tau)/\epsilon}
 	\exp\!\left(-\epsilon\frac{|y'|^2+(x_N+\tau/\delta)^2}{4(t-\tau)}\right)
 	\,\dee\tau\,\dee\sigma(y)\\
 	& \ge Ckt^{-\frac{N+2}{2}}R
 	\int_{\partial\Omega\setminus B'_1(0)}|y'|^2
 	\exp\!\left(-\epsilon\frac{|y'|^2+(x_N+(t/2)\delta)^2}{8t}\right)\,\dee\sigma(y)
 	\ge Ck
	\end{aligned}
\]
for all $(x,t)\in\overline{\Omega}\times(0,T)$ with $R\le x_N+t/\delta\le 2R$
and all $k\in(0,1]$.
Hence, the convergence rate in Theorem~{\rm\ref{Theorem:1.3}}~{\rm (5)} is optimal.
\end{remark}
%
\section{Diffusion limits with respect to $\epsilon$}
\label{section:4}
In this section, we study the diffusion limits of solutions to problems~\eqref{eq:HDD} 
and \eqref{eq:HD} with respect to $\epsilon$. 
As a byproduct, we derive an explicit representation of the fundamental solution to problem~\eqref{eq:LDD} 
(the Laplace equation in $\Omega$ with a diffusive dynamical boundary condition), 
introduced in Section~1.
A classical solution to problem~\eqref{eq:LDD} is a function~$u$ defined 
on $\overline\Omega\times[0,\infty)$
such that 
\[
	u\in C^{2;1}(\overline{\Omega}\times(0,\infty))\cap C(Q)
\]
and $u$ satisfies the equations in \eqref{eq:LDD} pointwise.
Here
\[
	Q:=(\overline{\Omega}\times[0,\infty))\setminus(\Omega\times\{0\}). 
\]

We first investigate the diffusion limits of solutions to problems~\eqref{eq:HDD} and \eqref{eq:HD} 
in the limit $\epsilon\to 0^+$, 
and obtain the following theorem. 
Define 
\begin{equation}
\label{eq:4.1}
	\GLDD{\delta}{k}(x,y,t)
	:=-2\int_0^\infty \Gamma_{N-1}\left(x'-y',\frac{k}{\delta}t+\tau\right)
	\partial_{x_N}\Gamma_1\left(x_N+y_N+\frac{t}{\delta},\tau\right)\,\dee\tau
\end{equation}
for $(x,y,t)\in\overline{D}$ with $x_N+y_N+t/\delta>0$, where $\delta>0$ and $k\ge 0$. 
\begin{theorem}
\label{Theorem:4.1}
Let $\delta>0$ and $k>0$ be fixed.
Let $(\phi,\psi)\in L^\infty(\Omega)\times L^\infty(\partial\Omega)$. 
Then, for any $L>0$ and any compact set $I\subset(0,\infty)$, 
\begin{equation}
\label{eq:4.2}
	\begin{aligned}
 	& \sup_{(x,t)\in\Omega_L\times I}
	\left|\uHDD{\epsilon}{\delta}{k}(x,t)-\uLDD{\delta}{k}(x,t)\right|
	=O\left(\epsilon^{\frac{1}{2}}\right),
	\\
 	& \sup_{(x,t)\in\Omega_L\times I}\left|\,\uHD{\epsilon}{\delta}(x,t)-\uLD{\delta}(x,t)\right|=O\left(\epsilon^{\frac{1}{2}}\right),
	\end{aligned}
\end{equation}
as $\epsilon\to 0^+$, where $\Omega_L:=\{x=(x',x_N)\in\overline{\Omega}:\,0\le x_N\le L\}$. 
Here, $\uLDD{\delta}{k}$ is a function on $\overline{\Omega}\times(0,\infty)$ given by 
\begin{equation}
\label{eq:4.3}
	\uLDD{\delta}{k}(x,t):=\int_{\partial\Omega}\GLDD{\delta}{k}(x,y,t)\psi(y)\,\dee\sigma(y),\quad (x,t)\in\overline{\Omega}\times(0,\infty),
\end{equation}
and $\uLD{\delta}$ is given in \eqref{eq:1.5}.
In particular, if $\psi\in BC(\partial\Omega)$, then $\uLDD{\delta}{k}\in C(Q)$
and is a bounded classical solution to problem~\eqref{eq:LDD}. 
Furthermore, for any $T>0$ and $R>0$,
\begin{equation}
\label{eq:4.4}
	\left|\uLDD{\delta}{k}(x,t)-\uLD{\delta}(x,t)\right|=O(k)\quad\mbox{as}\quad k\to 0^+
\end{equation}
uniformly for $(x,t)\in \overline{\Omega}\times(0,T]$ with $x_N+t/\delta\ge R$.
\end{theorem}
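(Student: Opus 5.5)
The plan is to compare the explicit kernels term by term, using the representation \eqref{eq:1.12}. For $x\in\Omega_L$, $t\in I$ we write
\[
\uHDD{\epsilon}{\delta}{k}(x,t)-\uLDD{\delta}{k}(x,t)
=\int_\Omega G_0\Big(x,y,\tfrac{t}{\epsilon}\Big)\phi\,\dee y+\frac1\delta\int_\Omega H\phi\,\dee y
+\int_{\partial\Omega}\Big(\frac1\epsilon H-\GLDD{\delta}{k}\Big)\psi\,\dee\sigma .
\]

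\emph{The two $\phi$-terms.} The first term is $O(\epsilon^{1/2})$ uniformly on $\Omega_L\times I$ by \eqref{eq:2.6}. For the second, $H\ge0$, so it is bounded by $\|\phi\|_{L^\infty}\frac1\delta\int_\Omega H\,\dee y$. By \eqref{eq:3.7} this is at most $C\epsilon^{1/2}t^{1/2}/\delta$, which is again $O(\epsilon^{1/2})$ on $I$.

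\emph{The $\psi$-term.} Substituting $s=(t-\tau)/\epsilon$ in \eqref{eq:1.8} gives
\[
\frac1\epsilon H(x,y,t)=-2\int_0^{t/\epsilon}\Gamma_{N-1}\Big(x'-y',s+\tfrac{k}{\delta}(t-\epsilon s)\Big)\,\partial_{x_N}\Gamma_1\Big(x_N+y_N+\tfrac{t-\epsilon s}{\delta},s\Big)\,\dee s .
\]
This has the same form as \eqref{eq:4.1} with $y_N=0$, except for the shifts $-\epsilon s$ and the finite upper limit. We bound $\|\psi\|_{L^\infty}$ times the $L^1(\dee y')$-norm of the kernel difference, splitting the $s$-integral at $s=t/(2\epsilon)$.

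On $s>t/(2\epsilon)$ we estimate each kernel separately. We integrate out $y'$ (each $\Gamma_{N-1}$ has mass one) and use $|\partial_{x_N}\Gamma_1(z,s)|\le Czs^{-3/2}$ with $z\le L+t/\delta\le C$. This gives $C\int_{t/(2\epsilon)}^\infty s^{-3/2}\,\dee s=O(\epsilon^{1/2})$, and this is the term that produces the rate.

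On $s<t/(2\epsilon)$ we have $z\ge t/(2\delta)\ge c>0$ on $I$, and we write the difference as a sum of two pieces.
\begin{itemize}
\item The $\Gamma_{N-1}$ piece: its time arguments $a,b$ are both comparable to $s+kt/\delta$, and $|a-b|=k\epsilon s/\delta$. Hence $\|\Gamma_{N-1}(\cdot,a)-\Gamma_{N-1}(\cdot,b)\|_{L^1}\le C|\log(a/b)|\le C\epsilon$.
\item The $\Gamma_1$ piece: a shift of the first argument by $\epsilon s/\delta$. By the mean value theorem and \eqref{eq:2.1}, it is bounded by $C\epsilon s\cdot s^{-3/2}e^{-c/s}$.
\end{itemize}
Integrating over $s<t/(2\epsilon)$ gives $C\epsilon\int_0^{t/(2\epsilon)}(s^{-3/2}+s^{-1/2})e^{-c/s}\,\dee s=O(\epsilon^{1/2})$.

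The case $k=0$ is identical. It reduces to \eqref{eq:1.7} once we check the subordination identity $\GLDD{\delta}{0}(x,y,t)=P(x'-y',x_N+t/\delta)$ for $y\in\partial\Omega$. This follows by integrating $-2\int_0^\infty\Gamma_{N-1}(x',s)\partial_{x_N}\Gamma_1(z,s)\,\dee s$ explicitly, which yields $c_Nz|(x',z)|^{-N}$.

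\emph{Classical-solution property of $\uLDD{\delta}{k}$.} For $z=x_N+t/\delta>0$ the integrand of \eqref{eq:4.1} (with $y_N=0$) is smooth with Gaussian bounds, so differentiation under the integral is justified. Using \eqref{eq:1.2}, $\Delta_x$ applied to $\Gamma_{N-1}(x',\frac{k}{\delta}t+s)\,\partial_{x_N}\Gamma_1(z,s)$ equals $\partial_s$ of the same expression. Integrating over $s\in(0,\infty)$ gives zero, since the expression vanishes at $s=0$ (because $z>0$) and at $s=\infty$; hence $\uLDD{\delta}{k}$ is harmonic. The boundary condition follows as in \eqref{eq:3.4}. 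The operator $\delta\partial_t-k\Delta'-\partial_{x_N}$ applied to the integrand, in the combined variable $z=x_N+t/\delta$, is a total $s$-derivative whose boundary terms vanish. Boundedness follows from Theorem~\ref{Theorem:1.1}~(3) by passing to the limit $\epsilon\to0^+$. For continuity at $t=0$ on $\partial\Omega$ we follow the argument of \eqref{eq:3.11}: the kernel has unit mass, and it concentrates at $y'=x'$ as $t\to0^+$, because $\Gamma_{N-1}(\cdot,kt/\delta+s)$ is weighted by $-2\partial\Gamma_1(t/\delta,s)\,\dee s$, a probability density concentrating at $s=0$.

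\emph{The estimate \eqref{eq:4.4}.} Here
\[
\|\Gamma_{N-1}(\cdot,\tfrac{k}{\delta}t+s)-\Gamma_{N-1}(\cdot,s)\|_{L^1}\le C\,\tfrac{kt}{\delta s}\le Ck T/s .
\]
Multiplying by $|\partial_{x_N}\Gamma_1(z,s)|\le Czs^{-3/2}e^{-z^2/8s}$ with $z\ge R$ and integrating gives $Ck\,z\int_0^\infty s^{-5/2}e^{-z^2/8s}\,\dee s\le CkR^{-2}$.

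The main obstacle is the region $s\approx t/\epsilon$, where the spatial argument $z$ of $\partial\Gamma_1$ degenerates to $x_N$, possibly $0$. There the mean value argument fails, which is why we estimate the two kernels separately on $s>t/(2\epsilon)$. This crude bound is exactly what limits the rate to $\epsilon^{1/2}$.
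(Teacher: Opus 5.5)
Your proposal is correct and is essentially the paper's proof: the $\phi$-terms are bounded in the same way, via \eqref{eq:3.6} (equivalently \eqref{eq:2.6}) and \eqref{eq:3.7}; the splitting of $\epsilon^{-1}H-\GLDD{\delta}{k}$ into a $\Gamma_1$-shift part, a $\Gamma_{N-1}$-time-shift part and a tail matches the paper's $J_1,J_2,J_3$; the computations for the classical-solution property and for $\GLDD{\delta}{0}=P$ are those of Lemma~\ref{Lemma:4.1}; and \eqref{eq:4.4} comes from the $L^1$ time-difference estimate \eqref{eq:4.11}. Two minor remarks: your extra split at $s=t/(2\epsilon)$ is harmless but unnecessary, because with the crude bound $|\partial_\xi\partial_{x_N}\Gamma_1(x_N+\xi,s)|\le Cs^{-3/2}$ (as used for the paper's $J_1$) the mean-value estimate does not fail near $s\approx t/\epsilon$, so the difference terms can be handled on all of $(0,t/\epsilon)$; and $\uLDD{\delta}{k}\in C(Q)$ requires the joint limit $(z,t)\to(x,0)$ with $z\in\overline{\Omega}$, which your concentration argument gives once the weight $-2\partial\Gamma_1(t/\delta,s)\,\dee s$ is replaced by $-2\partial\Gamma_1(z_N+t/\delta,s)\,\dee s$.
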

By Theorem~\ref{Theorem:4.1},
for any $\psi\in L^p(\partial\Omega)$ with $p\in[1,\infty]$,
the function $\uLDD{\delta}{k}$ can be interpreted as a solution to problem~\eqref{eq:LDD}.

For the proof of Theorem~\ref{Theorem:4.1}, 
we first state the following lemma, 
which shows that $\GLDD{\delta}{k}$ is the fundamental solution to problem~\eqref{eq:LDD} (resp.~problem~\eqref{eq:LD}) when $k>0$ (resp.~$k=0$).
\begin{lemma}
\label{Lemma:4.1}
Fix $k\ge 0$ and $\delta>0$. 
\begin{enumerate}[label={\rm(\arabic*)}]
	\item
    	$\GLDD{\delta}{k}(x,y,t)=\GLDD{\delta}{k}(y,x,t)$ for $(x,y,t)\in \overline{D}$ with $x_N+y_N+t/\delta>0$.
  	\item
    	For any fixed $y\in\partial\Omega$, $\GLDD{\delta}{k}=\GLDD{\delta}{k}(x,y,t)$ satisfies 
  	\[
  		\left\{
  		\begin{array}{ll}
  		\Delta_x \GLDD{\delta}{k}=0 & \mbox{in}\quad \Omega\times(0,\infty),
		\vspace{5pt}\\
  		\delta\partial_t \GLDD{\delta}{k}-k\Delta'_x \GLDD{\delta}{k}-\partial_{x_N} \GLDD{\delta}{k}=0 
		& \mbox{on}	\quad \partial\Omega\times(0,\infty),
  		\end{array}
  		\right.
  	\]
	as a function of the variables $(x,t)\in \overline{\Omega}\times(0,\infty)$. 
  	\item
  	$\GLDD{\delta}{k}$ satisfies
  	\begin{equation}
	\label{eq:4.5}
	\int_{\partial\Omega}\GLDD{\delta}{k}(x,y,t)\,\dee\sigma(y)=1,\quad (x,t)\in \overline{\Omega}\times(0,\infty).
   	\end{equation}
   	Furthermore, if $\psi\in BC(\partial\Omega)$, then 
	\[
	\lim_{\substack{(z,t)\in Q,\\ (z,t)\to(x,0)}}\int_{\partial\Omega}\GLDD{\delta}{k}(z,y,t)\psi(y)\,\dee\sigma(y)=\psi(x),\quad x\in\partial\Omega.
	\]
  	\item
    	If $k=0$, then 
  	\[
  		\GLDD{\delta}{k}(x,y,t)=P\left(x'-y',x_N+y_N+\frac{t}{\delta}\right)
  	\]
	for $(x,y,t)\in \overline{D}$ with $x_N+y_N+t/\delta>0$.
\end{enumerate}
\end{lemma}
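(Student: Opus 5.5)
Since $U^{\delta,k}$ depends on $(x,y)$ only through $x'-y'$ (via $|x'-y'|$) and $x_N+y_N$, assertion (1) is immediate from the definition \eqref{eq:4.1}. For the remaining assertions I would work directly with the integral \eqref{eq:4.1}, writing $s:=x_N+y_N+t/\delta>0$ and
\[
U^{\delta,k}(x,y,t)=-2\int_0^\infty \Gamma_{N-1}\Bigl(x'-y',\tfrac{k}{\delta}t+\tau\Bigr)\,\partial_{s}\Gamma_1(s,\tau)\,\dee\tau .
\]
Convergence near $\tau=0$ is guaranteed by the factor $s\tau^{-3/2}e^{-s^2/4\tau}$ with $s>0$, and near $\tau=\infty$ by $\tau^{-(N-1)/2-3/2}$ with $N\ge2$. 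Consequently, differentiation under the integral sign is justified locally uniformly on $\{s>0\}$.

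For assertion (2), harmonicity in $x$ follows by applying $\Delta_x=\Delta'_x+\partial_{x_N}^2$ inside the integral. Using \eqref{eq:1.2}, $\Delta'\Gamma_{N-1}=\partial_\tau\Gamma_{N-1}$ and $\partial_s^2\Gamma_1=\partial_\tau\Gamma_1$. The integrand of $\Delta_xU$ is therefore $-2\partial_\tau\bigl[\Gamma_{N-1}(x'-y',\frac{k}{\delta}t+\tau)\partial_s\Gamma_1(s,\tau)\bigr]$. Integrating in $\tau$ gives zero, since the bracket vanishes as $\tau\to0^+$ (because $s>0$) and as $\tau\to\infty$.

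For the boundary condition, note that $\partial_t$ acts through both $\frac{k}{\delta}t$ in $\Gamma_{N-1}$ and $s$, while $\partial_{x_N}$ acts through $s$ only. This gives
\[
\delta\partial_tU-k\Delta'_xU-\partial_{x_N}U=-2\int_0^\infty\Bigl[k\partial_\tau\Gamma_{N-1}\,\partial_s\Gamma_1+\Gamma_{N-1}\partial_s^2\Gamma_1-k\partial_\tau\Gamma_{N-1}\,\partial_s\Gamma_1-\Gamma_{N-1}\partial_s^2\Gamma_1\Bigr]\dee\tau=0 .
\]
The cancellation is exact, and this holds even in $\Omega$, in particular on $\partial\Omega$.

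For assertion (4), when $k=0$ the $\Gamma_{N-1}$ factor and $\partial_s\Gamma_1$ combine into $-\partial_{x_N}\Gamma_N((x'-y',s),\tau)$, so $U=2\int_0^\infty (s/2\tau)\Gamma_N(\cdot,\tau)\,\dee\tau$. The classical identity $\int_0^\infty \frac{s}{\tau}\Gamma_N(z,\tau)\,\dee\tau=c_Ns|z|^{-N}=P(z)$ then follows from the substitution $\sigma=|z|^2/4\tau$ and the Gamma-function value $\Gamma(N/2)$.

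For assertion (3), integrate first in $y'\in\mathbb{R}^{N-1}$ using ($\Gamma$1), taking $y\in\partial\Omega$ so that $y_N=0$. This gives
\[
\int_{\partial\Omega}U\,\dee\sigma(y)=-2\int_0^\infty\partial_s\Gamma_1(s,\tau)\,\dee\tau=\int_0^\infty\frac{s}{\tau}\Gamma_1(s,\tau)\,\dee\tau=1,
\]
which is the one-dimensional case of the computation in (4), since $P$ for $N=1$ is identically $1$. For the initial trace, take $(z,t)\to(x,0)$ with $x\in\partial\Omega$ and $(z,t)\in Q$, so that $s=z_N+t/\delta\to0^+$ and $t\to0$. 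Because of \eqref{eq:4.5} and positivity, it suffices to show that the mass of $U(z,\cdot,t)$ outside $B'_r(x)$ tends to zero. Using $\int_{|y'-z'|>r/2}\Gamma_{N-1}(z'-y',\frac{k}{\delta}t+\tau)\,\dee y'\le C\min\{1,(t+\tau)/r^2\}$, the mass is bounded by
\[
\int_0^\infty\frac{s}{\tau}\Gamma_1(s,\tau)\min\{1,C(t+\tau)/r^2\}\,\dee\tau .
\]
The $t/r^2$ part is $O(t)$ because the total mass is $1$. For the $\tau$ part, the substitution $\tau=s^2\eta$ turns the measure $\frac{s}{\tau}\Gamma_1(s,\tau)\,\dee\tau$ into a fixed probability measure in $\eta$, and the bound becomes $\min\{1,Cs^2\eta/r^2\}$, which tends to $0$ as $s\to0$ by dominated convergence. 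This uniform-in-$k$ concentration estimate is the main obstacle, and it is handled by exactly this rescaling. The usual splitting into the part where $|\psi(y)-\psi(x)|$ is small and the tail, combined with continuity of $\psi$ at $x$ and $|z'-x'|\to0$, then concludes the proof.
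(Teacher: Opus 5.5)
Your proposal is correct and follows essentially the same route as the paper. Symmetry comes from the dependence on $x'-y'$ and $x_N+y_N$. Harmonicity comes from recognizing the integrand of $\Delta_x U^{\delta,k}$ as a $\tau$-derivative, and the boundary operator cancels exactly. For the normalization $\int_{\partial\Omega}U^{\delta,k}\,\dee\sigma(y)=1$ and for (4), both you and the paper integrate in $y'$ and then substitute $\tau\sim s^2$ times a fixed variable.

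The only minor difference is in the initial trace. The paper controls the $y'$-tail by the Gaussian factor $\exp\bigl(-R^2/(32(kt/\delta+\tau))\bigr)$, whereas you use the second-moment bound $\min\{1,C(t+\tau)/r^2\}$. Both reduce to dominated convergence against the same $s$-independent probability measure.
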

{\bf Proof.}
Assertion~(1) follows immediately from \eqref{eq:4.1}.
We prove assertion~(2).
It follows from \eqref{eq:1.2} and \eqref{eq:4.1} that
\[
	\begin{aligned}
	\Delta_x \GLDD{\delta}{k}(x,y,t) & =-2\int_0^\infty 
	\Delta_{x'}\Gamma_{N-1}\left(x'-y',\frac{k}{\delta}t+\tau\right)\partial_{x_N}
	\Gamma_1\left(x_N+\frac{t}{\delta},\tau\right)\,\dee\tau
	\\
 	& \qquad\quad
 	-2\int_0^\infty \Gamma_{N-1}\left(x'-y',\frac{k}{\delta}t+\tau\right)
	\partial_{x_N}^3\Gamma_1\left(x_N+\frac{t}{\delta},\tau\right)\,\dee\tau
	\\
 	& =-2\int_0^\infty \partial_\tau\left(\Gamma_{N-1}\left(x'-y',\frac{k}{\delta}t+\tau\right)
	\partial_{x_N}\Gamma_1\left(x_N+\frac{t}{\delta},\tau\right)\right)\,\dee\tau
	\\
 	& =-2\lim_{\tau\to\infty}\left(\Gamma_{N-1}\left(x'-y',\frac{k}{\delta}t+\tau\right)
	\partial_{x_N}\Gamma_1\left(x_N+\frac{t}{\delta},\tau\right)\right)
	\\
 	& \qquad\quad +2\lim_{\tau\to 0^+}\left(\Gamma_{N-1}\left(x'-y',\frac{k}{\delta}t+\tau\right)
	\partial_{x_N}\Gamma_1\left(x_N+\frac{t}{\delta},\tau\right)\right)=0
	\end{aligned}
\]
for $(x,y,t)\in\Omega\times {\partial}\Omega\times(0,\infty)$. 
On the other hand, since
\[
	\begin{aligned}
 	& \delta\partial_t\Gamma_{N-1}\left(x'-y',\frac{k}{\delta}t+\tau\right)
	=k\Delta_{x'}\Gamma_{N-1}\left(x'-y',\frac{k}{\delta}t+\tau\right),\quad x',y'\in{\mathbb R}^{N-1},\,\,\, t,\tau\in(0,\infty),
	\\
 	& \delta\partial_t\partial_{x_N}\Gamma_1\left(x_N+\frac{t}{\delta},\tau\right)\,\biggr|_{x_N=0}
	=\partial_{x_N}^2\Gamma_1\left(x_N+\frac{t}{\delta},\tau\right)\,\biggr|_{x_N=0},\quad t,\tau\in(0,\infty),
	\end{aligned}
\]
we have
\[
	\begin{aligned}
 	& \delta\partial_t \GLDD{\delta}{k}(x,y,t)-k\Delta'_x \GLDD{\delta}{k}(x,y,t)-\partial_{x_N} \GLDD{\delta}{k}(x,y,t)
	\\
 	& =-2\delta\int_0^\infty \partial_t\Gamma_{N-1}\left(x'-y',\frac{k}{\delta}t+\tau\right)
 	\partial_{x_N}\Gamma_1\left(x_N+\frac{t}{\delta},\tau\right)\,\dee\tau\,\biggr|_{x_N=0}
	\\
 	& \qquad
 	-2\delta\int_0^\infty \Gamma_{N-1}\left(x'-y',\frac{k}{\delta}t+\tau\right)
 	\partial_t\partial_{x_N}\Gamma_1\left(x_N+\frac{t}{\delta},\tau\right)\,\dee\tau\,\biggr|_{x_N=0}
	\\
  	& \qquad\quad
 	+2k\int_0^\infty \Delta_{x'}\Gamma_{N-1}\left(x'-y',\frac{k}{\delta}t+\tau\right)
 	\partial_{x_N}\Gamma_1\left(x_N+\frac{t}{\delta},\tau\right)\,\dee\tau\,\biggr|_{x_N=0}
	\\
	& \qquad\qquad
 	+2\int_0^\infty \Gamma_{N-1}\left(x'-y',\frac{k}{\delta}t+\tau\right)
 	\partial_{x_N}^2\Gamma_1\left(x_N+\frac{t}{\delta},\tau\right)\,\dee\tau\,\biggr|_{x_N=0}
 	=0
	\end{aligned}
\]
for $(x,y,t)\in\partial\Omega\times\partial\Omega\times(0,\infty)$. 
Hence, assertion~(2) follows. 

We prove assertion~(3). 
Since
\[
	\begin{aligned}
 	& -2\int_0^\infty\partial_{x_N}\Gamma_1\left(x_N+\frac{t}{\delta}{,\tau}\right)\,\dee\tau
 	=\int_0^\infty\frac{x_N+t/\delta}{\tau}\Gamma_1\left(x_N+\frac{t}{\delta}{,\tau}\right)\,\dee\tau
	\\
 	& \qquad\quad
 	=\pi^{-\frac{1}{2}}\int_0^\infty \left(\frac{(x_N+t/\delta)^2}{4\tau}\right)^{\frac{1}{2}}
	\exp\left(-\frac{(x_N+t/\delta)^2}{4\tau}\right)\,\frac{\dee\tau}{\tau}
	=\pi^{-\frac{1}{2}}\int_0^\infty \xi^{-\frac{1}{2}}e^{-\xi}\,\dee\xi=1
	\end{aligned}
\]
for $(x,t)\in \overline{\Omega}\times(0,\infty)$,
it follows from property~($\Gamma$1) and \eqref{eq:4.1} that 
\[
	\int_{\partial\Omega} \GLDD{\delta}{k}(x,y,t)\,\dee\sigma(y)
	=-2\int_0^\infty\partial_{x_N}\Gamma_1\left(x_N+\frac{t}{\delta},\tau\right)\,\dee\tau=1,
	\quad (x,t)\in \overline{\Omega}\times(0,\infty).
\]
Hence, \eqref{eq:4.5} holds. 
Furthermore, for any $x\in\partial\Omega$, $R>0$, and $z\in B^+_{R/2}(x)$,
it follows from property~($\Gamma$1) 
and the relation $|z'-y'|\ge|x'-y'|/2$ for $y\in\partial\Omega\setminus B'_R(x)$ that
\[
	\begin{aligned}
 	& \int_{\partial\Omega\setminus B'_R(x)}\Gamma_{N-1}(z'-y',t)\,\dee\sigma(y)
	\\
 	& \le\exp\left(-\frac{R^2}{32t}\right)
	\int_{\partial\Omega\setminus B'_R(x)}(4\pi t)^{-\frac{N-1}{2}}\exp\left(-\frac{|x'-y'|^2}{32t}\right)\,\dee\sigma(y)\le C\exp\left(-\frac{R^2}{32t}\right).
	\end{aligned}
\]
This, together with \eqref{eq:4.1}, implies that 
\[
	\begin{aligned}
 	& \int_{\partial\Omega\setminus B'_R(x)} \GLDD{\delta}{k}(z,y,t)\,\dee\sigma(y)
	\\
 	& \le C\int_0^\infty \exp\left(-\frac{R^2}{32((kt/\delta)+\tau)}\right)
	\left(\frac{(z_N+t/\delta)^2}{4\tau}\right)^{\frac{1}{2}}
	\exp\left(-\frac{(z_N+t/\delta)^2}{4\tau}\right)\,\frac{\dee\tau}{\tau}\\
 	& \le C\int_0^\infty\exp\left(-\frac{R^2}{32((kt/\delta)+(z_N+t/\delta)^2/(4\xi))}\right)
	\xi^{-\frac{1}{2}}e^{-\xi}\,\dee\xi
	\end{aligned}
\]
for $(x,t)\in \partial\Omega\times(0,\infty)$, 
$R>0$, $z\in B^+_{R/2}(x)$, and $t\in(0,1)$. 
Applying the Lebesgue dominated convergence theorem, 
we obtain
\begin{equation}
\label{eq:4.6}
\lim_{\substack{(z,t)\in Q,\\ (z,t)\to(x,0)}} 
\int_{\partial\Omega\setminus B'_R(x)} \GLDD{\delta}{k}(z,y,t)\,\dee\sigma(y)=0
\end{equation}
for $x\in \partial\Omega$ and $R>0$.
By \eqref{eq:4.5} and \eqref{eq:4.6}, 
for any $\psi\in BC(\partial\Omega)$, $x\in\partial\Omega$, and $R>0$, 
we obtain
\[
	\begin{aligned}
 	& \left|\int_{\partial\Omega} \GLDD{\delta}{k}(z,y,t)\psi(y)\,\dee \sigma(y)-\psi(x)\right|\\
 	& =\left|\int_{\partial\Omega} \GLDD{\delta}{k}(z,y,t)(\psi(y)-\psi(x))\,\dee\sigma(y)\right|
	\\
 	& \le 2\|\psi\|_{L^\infty(\Omega)}\int_{\partial\Omega\setminus B'_R(x)} \GLDD{\delta}{k}(z,y,t)\,\dee\sigma(y)
 	+\sup_{y\in \partial\Omega\cap B'_R(x)}|\psi(x)-\psi(y)|
	\\
 	& \to \sup_{y\in \partial\Omega\cap B'_R(x)}|\psi(x)-\psi(y)|\quad\mbox{as}\quad (z,t)\to (x,0^+).
	\end{aligned}
\]
Since $R>0$ is arbitrary, 
we conclude, by the continuity of $\psi$, that
    \[
    \lim_{\substack{(z,t)\in Q,\\ (z,t)\to(x,0)}}
    \int_{\partial\Omega}\GLDD{\delta}{k}(z,y,t)\psi(y)\,\dee\sigma(y)=\psi(x),\quad x\in\partial\Omega.
    \]
Hence, assertion~(3) holds.

We prove assertion~(4). 
Let $k=0$. 
Then it follows from \eqref{eq:1.6} and \eqref{eq:4.1} that 
\[
	\begin{aligned}
 	\GLDD{\delta}{k}(x,y,t)
 	& =-2\int_0^\infty \Gamma_{N-1}\left(x'-y',\tau\right)\partial_{x_N}
	\Gamma_1\left(x_N+y_N+\frac{t}{\delta},\tau\right)\,\dee\tau
	\\
 	& =\left(x_N+y_N+\frac{t}{\delta}\right)
	\int_0^\infty \Gamma_N\left(x-y^*+\frac{t}{\delta}e_N,\tau\right)\,\frac{\dee\tau}{\tau}
	\\
 	& =(4\pi)^{-\frac{N}{2}}\left(x_N+y_N+\frac{t}{\delta}\right)
	\int_0^\infty\left(\frac{|x-y^*+(t/\delta)e_N|^2}{4\xi}\right)^{-\frac{N}{2}}e^{-\xi}\frac{\dee\xi}{\xi}
	\\
 	& =\pi^{-\frac{N}{2}}\left(x_N+y_N+\frac{t}{\delta}\right)\left|x-y^*+\frac{t}{\delta}e_N\right|^{-N}
	\int_0^\infty \xi^{-1+\frac{N}{2}}e^{-\xi}\,\dee\xi
	\\
 	& =P\left(x'-y',x_N+y_N+\frac{t}{\delta}\right)
	\end{aligned}
\]
for $(x,y,t)\in \overline{D}$ with $x_N+y_N+t/\delta>0$. 
Hence, assertion~(4) holds. 
This completes the proof of Lemma~\ref{Lemma:4.1}.
$\Box$
\medskip
\newline
{\bf Proof of Theorem~\ref{Theorem:4.1}.}
Let $\delta>0$ be fixed.
Let $(\phi,\psi)\in L^\infty(\Omega)\times L^\infty(\partial\Omega)$, and 
denote by $u$ the solution $\uHDD{\epsilon}{\delta}{k}$ (resp. $\uHD{\epsilon}{\delta}$) when $k>0$ (resp. $k=0$). 
Let $L>0$ and $I$ be a compact set in $(0,\infty)$. 
It follows from \eqref{eq:3.6} that
\[
	\int_\Omega G_0\left(x,y,\frac{t}{\epsilon}\right)\phi(y)\,\dee y
	\le\|\phi\|_{L^\infty(\Omega)}\int_{-x_N}^{x_N}\Gamma_1\left(\xi,\frac{t}{\epsilon}\right)\,\dee\xi
 	\le C\epsilon^{\frac{1}{2}} t^{-\frac{1}{2}}x_N\le C\epsilon^{\frac{1}{2}}
\]
for $(x,t)\in \Omega_L\times I$ and $\epsilon>0$. 
It also follows from \eqref{eq:3.7} that
\[
	\frac{1}{\delta}\int_\Omega H(x,y,t)\phi(y)\,\dee y
	\le \frac{C\epsilon^{\frac{1}{2}}}{\delta}t^{\frac{1}{2}}\|\phi\|_{L^\infty(\Omega)}
	\le C\epsilon^{\frac{1}{2}}
\]
for $(x,t)\in \Omega_L\times I$ and $\epsilon>0$. 
These, together with \eqref{eq:1.12}, imply that
\begin{equation}
\label{eq:4.7}
	\begin{aligned}
 	& \sup_{(x,t)\in\Omega_L\times I}
	\left|\,u(x,t)-\int_{\partial\Omega}\GLDD{\delta}{k}(x,y,t)\psi(y)\,\dee\sigma(y)\,\right|
	\\
 	& \le\sup_{(x,t)\in\Omega_L\times I}
 	\left|\,\frac{1}{\epsilon}\int_{\partial\Omega}H(x,y,t)\psi(y)\,\dee\sigma(y)
	-\int_{\partial\Omega}\GLDD{\delta}{k}(x,y,t)\psi(y)\,\dee\sigma(y)\,\right|
 	+C\epsilon^{\frac{1}{2}}
	\end{aligned}
\end{equation}
for $\epsilon>0$. 

On the other hand, for {$(x,y,t)\in\overline{\Omega}\times\partial\Omega\times(0,\infty)$}, 
by \eqref{eq:1.8} and \eqref{eq:4.1}, we obtain
\[
	\begin{aligned}
	 & H(x,y,t)\\
 	& =-2\int_0^t \Gamma_{N-1}\left(x'-y',\frac{\tau}{\epsilon}+\frac{k}{\delta}(t-\tau)\right)
	\partial_{x_N}\Gamma_1\left(x_N+\frac{t-\tau}{\delta},\frac{\tau}{\epsilon}\right)\,\dee\tau
	\\
 	& =J_1(x,y,t)+J_2(x,y,t)+J_3(x,y,t)
	\\
 	& \qquad\quad
 	-2\int_0^\infty \partial_{x_N}\Gamma_1\left(x_N+\frac{t}{\delta},\frac{\tau}{\epsilon}\right)
	\Gamma_{N-1}\left(x'-y',\frac{\tau}{\epsilon}+\frac{k}{\delta}t\right)\,\dee\tau
	\\
 	& =J_1(x,y,t)+J_2(x,y,t)+J_3(x,y,t)+\epsilon \GLDD{\delta}{k}(x,y,t), 
	\end{aligned}
\]
where
\[
	\begin{aligned}
 	& J_1(x,y,t):= -2\int_0^t \Gamma_{N-1}\left(x'-y',\frac{\tau}{\epsilon}+\frac{k}{\delta}(t-\tau)\right)
	\\
 	& \qquad\quad
 	\times\left\{\partial_{x_N}\Gamma_1\left(x_N+\frac{t-\tau}{\delta},\frac{\tau}{\epsilon}\right)
	-\partial_{x_N}\Gamma_1\left(x_N+\frac{t}{\delta},\frac{\tau}{\epsilon}\right)\right\}\,\dee\tau,
	\\
 	& J_2(x,y,t):=-2\int_0^t \left\{\Gamma_{N-1}\left(x'-y',\frac{\tau}{\epsilon}+\frac{k}{\delta}(t-\tau)\right)
	-\Gamma_{N-1}\left(x'-y',\frac{\tau}{\epsilon}+\frac{k}{\delta}t\right)\right\}
	\\
 	& \qquad\quad
 	\times \partial_{x_N}\Gamma_1\left(x_N+\frac{t}{\delta},\frac{\tau}{\epsilon}\right)\,\dee\tau,
	\\
 	& J_3(x,y,t):=2\int_t^\infty \Gamma_{N-1}\left(x'-y',\frac{\tau}{\epsilon}+\frac{k}{\delta}t\right)
	\partial_{x_N}\Gamma_1\left(x_N+\frac{t}{\delta},\frac{\tau}{\epsilon}\right)\,\dee\tau.
	\end{aligned}
\]
Since
\[
	J_1(x,y,t)
	=2\int_0^t\int_{(t-\tau)/\delta}^{t/\delta}
	\Gamma_{N-1}\left(x'-y',\frac{\tau}{\epsilon}+\frac{k}{\delta}(t-\tau)\right)
	\partial_{x_N}\partial_\xi\Gamma_1\left(x_N+\xi,\frac{\tau}{\epsilon}\right)\,\dee\xi\,\dee\tau,
\]
it follows from properties~($\Gamma$1) and ($\Gamma$2) that
\begin{equation}
\label{eq:4.8}
	\begin{aligned}
 	& \frac{1}{\epsilon}\int_{\partial\Omega}|J_1(x,y,t)|\,\dee\sigma(y)
	\\
 	& \le \frac{C}{\epsilon}\int_0^t\int_{(t-\tau)/\delta}^{t/\delta}
	\left(\frac{\tau}{\epsilon}\right)^{-\frac{3}{2}}\,\dee\xi\,\dee\tau
 	\le\frac{C}{\epsilon}\int_0^t \frac{\tau}{\delta}\left(\frac{\tau}{\epsilon}\right)^{-\frac{3}{2}}\,\dee\tau
 	\le \frac{C\epsilon^{\frac{1}{2}}}{\delta}t^{\frac{1}{2}}
	\le C\epsilon^{\frac{1}{2}}
	\end{aligned}
\end{equation}
for $(x,t)\in \Omega_L\times I$ and $\epsilon>0$. 
Moreover, if $k>0$, since 
\[
 	J_2(x,y,t)=2\int_0^t\int_{-k\tau/\delta}^0
 	\partial_\xi\Gamma_{N-1}\left(x'-y',\frac{\tau}{\epsilon}+\frac{k}{\delta}t+\xi\right)
 	\partial_{x_N}\Gamma_1\left(x_N+\frac{t}{\delta},\frac{\tau}{\epsilon}\right)\,\dee\xi\,\dee\tau,
\]
by properties~($\Gamma$1) and ($\Gamma$2) again, we have
\begin{equation}
\label{eq:4.9}
	\begin{aligned}
 	& \frac{1}{\epsilon}\int_{\partial\Omega} |J_2(x,y,t)|\,\dee\sigma(y)
	\\
 	& \le \frac{C}{\epsilon}\left(\int_0^{t/2}+\int_{t/2}^t\right)\int_{-k\tau/\delta}^0
 	\left(\frac{\tau}{\epsilon}+\frac{k}{\delta}t+\xi\right)^{-1}
 	\left(\frac{\tau}{\epsilon}\right)^{-\frac{3}{2}}\left(x_N+\frac{t}{\delta}\right)\,\dee\xi\,\dee\tau
	\\
	& \le \frac{C}{\epsilon}\left(x_N+\frac{t}{\delta}\right)
	\left\{\left(\frac{k}{\delta}t\right)^{-1}\int_0^{t/2}
	\int_{-k\tau/\delta}^0\left(\frac{\tau}{\epsilon}\right)^{-\frac{3}{2}}\,\dee\xi\,\dee\tau
	+\int_{t/2}^t\int_{-k\tau/\delta}^0\left(\frac{\tau}{\epsilon}\right)^{-\frac{5}{2}}\,\dee\xi\,\dee\tau\right\}
	\\
 	& \le C\epsilon^{\frac{1}{2}}\left(x_N+\frac{t}{\delta}\right)
 	\left\{t^{-1}\int_0^{t/2}\tau^{-\frac{1}{2}}\,\dee\tau
 	+\epsilon\int_{t/2}^t \tau^{-\frac{3}{2}}\,\dee\tau\right\}
	\\
	& \le C\epsilon^{\frac{1}{2}}\left(x_N+\frac{t}{\delta}\right)\left(t^{-\frac{1}{2}}+\epsilon t^{-\frac{1}{2}}\right)
	\le C\epsilon^{\frac{1}{2}}+C\epsilon^{\frac{3}{2}}
	\end{aligned}
\end{equation}
for $(x,t)\in \Omega_L\times I$ and $\epsilon>0$,
If $k=0$, then it holds that $J_2(x,y,t)\equiv0$ for $(x,t)\in\overline{\Omega}\times(0,\infty)$ 
and $\epsilon>0$.
Since 
\[
	J_3(x,y,t)=2\epsilon\int_{t/\epsilon}^\infty \Gamma_{N-1}\left(x'-y',\tau+\frac{k}{\delta}t\right)
	\partial_{x_N}\Gamma_1\left(x_N+\frac{t}{\delta},\tau\right)\,\dee\tau,
\]
by properties~($\Gamma$1) and ($\Gamma$2), we obtain
\begin{equation}
\label{eq:4.10}
	\begin{aligned}
 	& \frac{1}{\epsilon}\int_{\partial\Omega} |J_3(x,y,t)|\,\dee\sigma(y)
 	\le 2\int_{t/\epsilon}^\infty\left|\partial_{x_N}\Gamma_1\left(x_N+\frac{t}{\delta},\tau\right)\right|\,\dee\tau
	\\
 	& \qquad\quad
 	\le C\left(x_N+\frac{t}{\delta}\right)\int_{\epsilon^{-1}t}^\infty\tau^{-\frac{3}{2}}\,\dee\tau
  	\le C\epsilon^{\frac{1}{2}}\left(x_N+\frac{t}{\delta}\right)t^{-\frac{1}{2}}
 	\le C\epsilon^{\frac{1}{2}}
	\end{aligned}
\end{equation}
for $(x,t)\in \Omega_L\times I$ and $\epsilon>0$.
Therefore, by \eqref{eq:4.7}, \eqref{eq:4.8}, \eqref{eq:4.9}, and \eqref{eq:4.10}, 
we conclude that
\[
	\begin{aligned}
 	& \left|u(x,t)-\int_{\partial\Omega}\GLDD{\delta}{k}(x,y,t)\psi(y)\,\dee\sigma(y)\right|
	\\
 	& \le\frac{1}{\epsilon}\|\psi\|_{L^\infty(\partial\Omega)}\sum_{j=1}^3
	\sup_{(x,t)\in\Omega_L\times I}\int_{\partial\Omega} |J_j(x,y,t)|\,\dee\sigma(y)+C\epsilon^{\frac{1}{2}}
 	\le C\epsilon^{\frac{1}{2}}+C\epsilon^{\frac{3}{2}}
	\end{aligned}
\]
for $(x,t)\in \Omega_L\times I$ and $\epsilon>0$.
Hence, \eqref{eq:4.2} holds. 
If $\psi\in BC(\partial\Omega)$, 
by Lemma~\ref{Lemma:4.1}, 
we see that $\uLDD{\delta}{k}$ (resp.~$\uLD{\delta}$) is a bounded classical solution 
to problem~\eqref{eq:LDD} (resp.~problem~\eqref{eq:LD}). 

It remains to prove \eqref{eq:4.4}. 
It follows from \eqref{eq:4.1} that
\[
	\begin{aligned}
 	& \GLDD{\delta}{k}(x,y,t)-\GLDD{\delta}{0}(x,y,t)
	\\
 	& =-2\int_0^\infty \left\{\Gamma_{N-1}\left(x'-y',\frac{k}{\delta}t+\tau\right)
	-\Gamma_{N-1}\left(x'-y',\tau\right)\right\}
	\partial_{x_N}\Gamma_1\left(x_N+\frac{t}{\delta},\tau\right)\,\dee\tau
	\\
 	& =-2\int_0^\infty\left(\int_0^{kt/\delta} \partial_\xi\Gamma_{N-1}\left(x'-y',\xi+\tau\right)\,\dee\xi\right) 
	\partial_{x_N}\Gamma_1\left(x_N+\frac{t}{\delta},\tau\right)\,\dee\tau
	\end{aligned}
\]
for {$(x,y,t)\in\overline{\Omega}\times\partial\Omega\times(0,\infty)$}. 
Then, for any $T>0$ and $R>0$, 
by property~($\Gamma$2) and \eqref{eq:2.2}, we obtain
\begin{equation}
\label{eq:4.11}
	\begin{aligned}
 	& \int_{\partial\Omega}\left|\GLDD{\delta}{k}(x,y,t)-\GLDD{\delta}{0}(x,y,t)\right|\,\dee\sigma(y)
	\\
	& \le C\int_0^\infty\int_0^{kt/\delta}
	(\xi+\tau)^{-1}\frac{x_N+t/\delta}{\tau^{\frac{3}{2}}}\exp\left(-\frac{(x_N+t/\delta)^2}{4\tau}\right)\,\dee\xi\,\dee\tau
	\\
	& \le \frac{Ckt}{\delta}\int_0^\infty\tau^{-2}\exp\left(-\frac{(x_N+t/\delta)^2}{8\tau}\right)\,\dee\tau\\
	& \le \frac{CkT}{\delta}\int_0^\infty\tau^{-2}\exp\left(-\frac{R^2}{8\tau}\right)\,\dee\tau
	\le \frac{CkT}{\delta}
	\end{aligned}
\end{equation}
for $(x,t)\in \overline{\Omega}\times(0,T]$ with $x_N+t/\delta\ge R$. 
Hence, \eqref{eq:4.4} holds. 
Thus, Theorem~\ref{Theorem:4.1} follows.
$\Box$
\begin{remark}
\label{Remark:4.1}
{\rm (1)} 
The convergence rate in~\eqref{eq:4.2} is optimal. 
Indeed, consider the case where 
$\phi=\chi_{{\mathbb R}^{N-1}\times(1,\infty)}$ in $\Omega$ 
and $\psi=0$ on $\partial\Omega$. 
If $k>0$, then it follows from \eqref{eq:1.3}, \eqref{eq:1.12}, and \eqref{eq:4.3} that
\[
	\begin{aligned}
  	& \uHDD{\epsilon}{\delta}{k}(e_N,1)-\uLDD{\delta}{k}(e_N,1)=\uHDD{\epsilon}{\delta}{k}(e_N,1)
	\\
  	& \ge \int_1^\infty\int_{{\mathbb R}^{N-1}}G_0\left(e_N,y,\frac{1}{\epsilon}\right)\,\dee y'\,\dee y_N\\
  	& =\int_0^\infty \Gamma_1\left(y_N,\frac{1}{\epsilon}\right)\,\dee y_N
	-\int_2^\infty \Gamma_1\left(y_N,\frac{1}{\epsilon}\right)\,\dee y_N
	\\
  	&=\int_0^2 \Gamma_1\left(y_N,\frac{1}{\epsilon}\right)\,\dee y_N\ge
  	\left(\frac{4\pi}{\epsilon}\right)^{-\frac{1}{2}}\int_0^2\exp\left(-\frac{y_N^2}{4}\right)\,\dee y_N\\
	& \ge C\epsilon^{1/2}
	\end{aligned}
\]
for $\epsilon\in(0,1)$. 
This shows that the convergence rate in~\eqref{eq:4.2} is optimal when $k>0$. 
The case $k=0$ can be treated similarly.
\newline
{\rm (2)} 
The convergence rate in \eqref{eq:4.4} is optimal. 
To see this, consider the case where $\psi=\chi_{B'_1(0)}$ on~$\partial\Omega$. 
Since it follows from Lemma~{\rm\ref{Lemma:4.1}\,(4)} and \eqref{eq:4.1} that
\[
	\begin{aligned}
	 & \GLDD{\delta}{k}(x,y,t)\\
  	&=-2\int_0^\infty\int_{\R^{N-1}}\Gamma_{N-1}\left(z'-y',\frac{k}{\delta}t\right)
	\Gamma_{N-1}\left(x'-z',\tau\right)\partial_{x_N}
	\Gamma_1\left(x_N+\frac{t}{\delta},\tau\right)\,\dee z'\,\dee\tau
	\\
  	&=\int_{\R^{N-1}}\Gamma_{N-1}\left(z'-y',\frac{k}{\delta}t\right)
	\GLDD{\delta}{0}\left(x,(z',0),t\right)\,\dee z'
	\\
  	&=\int_{\R^{N-1}}\Gamma_{N-1}\left(z'-y',\frac{k}{\delta}t\right)
	P\left(x'-z',x_N+\frac{t}{\delta}\right)\,\dee z'
	\\
  	& =\int_{\R^{N-1}}\Gamma_{N-1}\left(x'-z',\frac{k}{\delta}t\right)
	P\left(z'-y',x_N+\frac{t}{\delta}\right)\,\dee z'
	\end{aligned}
\]
for $(x,y,t)\in\overline{\Omega}\times\partial\Omega\times(0,\infty)$,
we have
\[
	\begin{aligned}
 	& \uLDD{\delta}{k}(x,t)-\uLD{\delta}(x,t)
	\\
 	& =\int_{\R^{N-1}}\int_{\R^{N-1}}\Gamma_{N-1}\left(x'-z',\frac{k}{\delta}t\right)
	P\left(z'-y',x_N+\frac{t}{\delta}\right)\psi(y')\,\dee z'\,\dee y'-\uLD{\delta}(x,t)
	\\
 	& =\int_{\R^{N-1}}\Gamma_{N-1}\left(x'-z',\frac{k}{\delta}t\right)
	\uLD{\delta}((z',x_N),t)\,\dee z'-\uLD{\delta}(x,t)
	\\
 	& =\int_0^{kt/\delta}\int_{\R^{N-1}}\partial_\xi\Gamma_{N-1}(x'-z',\xi)
	\uLD{\delta}((z',x_N),t)\,\dee\xi\,\dee z'
	\\
 	& =\int_0^{kt/\delta}\int_{\R^{N-1}}\Delta_{z'}\Gamma_{N-1}(x'-z',\xi)
	\uLD{\delta}((z',x_N),t)\,\dee\xi\,\dee z'
	\\
 	& =\int_0^{kt/\delta}\int_{\R^{N-1}}\Gamma_{N-1}(x'-z',\xi)\Delta_{z'}
	\uLD{\delta}((z',x_N),t)\,\dee\xi\,\dee z'.
	\end{aligned}
\]
Since $\uLD{\delta}$ is harmonic in $\Omega$, we obtain 
\[
	\begin{aligned}
	\frac{1}{k}(\uLDD{\delta}{k}(x,t)-\uLD{\delta}(x,t))
 	& =-\frac{1}{k}\int_0^{kt/\delta}\int_{\R^{N-1}}\Gamma_{N-1}(x'-z',\xi)
	(\partial_{x_N}^2\uLD{\delta})((z',x_N),t)\,\dee z'\,\dee\xi
	\\
 	& \to -\frac{t}{\delta}(\partial_{x_N}^2\uLD{\delta})(x,t)
	\quad\mbox{as}\quad k\to 0^+
	\end{aligned}
\]
for $x\in\Omega$.
On the other hand, 
since $\psi=\chi_{B'_1(0)}$ on $\partial\Omega$, 
for any fixed $t_*>0$, 
we have $0<\uLD{\delta}(x,t_*)\le 1$ for $x\in\Omega$ 
and $\lim_{x_N\to\infty}\uLD{\delta}(x,t_*)=0$ {\rm({\it see} \eqref{eq:2.13})}.
Then there exists $x_*\in\Omega$ such that $(\partial_{x_N}^2\uLD{\delta})(x_*,t_*)\not=0$. 
Hence, 
\[
	\lim_{k\to 0^+}\frac{1}{k}(\uLDD{\delta}{k}(x_*,t)-\uLD{\delta}(x_*,t))
	=-\frac{t}{\delta}(\partial_{x_N}^2\uLD{\delta})(x_*,t)\not=0.
\]
Thus, the convergence rate in \eqref{eq:4.4} is optimal.
\end{remark}

In the next two theorems, 
we state results concerning the diffusion limits of solutions to problems~\eqref{eq:LDD} and \eqref{eq:LD}.
\begin{theorem}
\label{Theorem:4.2}
Let $\psi\in L^p(\partial\Omega)$ with $p\in[1,\infty)$ and $T>0$. 
Then
\[
	\begin{aligned}
    & \sup_{(x,t)\in \Omega\times(T,\infty)}
	|\uLDD{\delta}{k}(x,t)|=O\left(\delta^\frac{N-1}{p}\right)\quad\textrm{as}\quad\delta\to 0^+\quad 
	\textrm{for any fixed $k>0$},\\
    & \sup_{(x,t)\in \Omega\times(T,\infty)}
	|\uLD{\delta}(x,t)|=O\left(\delta^\frac{N-1}{p}\right)\quad\textrm{as $\delta\to 0^+$},\\
 	& \lim_{k\to\infty}\sup_{(x,t)\in \Omega\times(T,\infty)}|\uLDD{\delta}{k}(x,t)|=O\left(k^{-\frac{N-1}{2p}}\right)
	\quad\textrm{as}\quad k\to\infty\quad \textrm{for any fixed $\delta>0$}.
	\end{aligned}
\]
\end{theorem}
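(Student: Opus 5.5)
The proof rests on the convolution structure of $\GLDD{\delta}{k}$ obtained in Remark~\ref{Remark:4.1}\,(2):
\[
	\GLDD{\delta}{k}(x,y,t)=\int_{\R^{N-1}}\Gamma_{N-1}\left(x'-z',\frac{k}{\delta}t\right)
	P\left(z'-y',x_N+\frac{t}{\delta}\right)\,\dee z'
\]
for $y\in\partial\Omega$. This identity uses only ($\Gamma$1) and Lemma~\ref{Lemma:4.1}\,(4). Consequently
\[
	\uLDD{\delta}{k}(x,t)=\int_{\R^{N-1}}\Gamma_{N-1}\left(x'-z',\frac{k}{\delta}t\right)\uLD{\delta}((z',x_N),t)\,\dee z',
\]
and also $\uLDD{\delta}{k}(x,t)=\uLDi{\Psi(\cdot,t)}((x',x_N+t/\delta))$, where $\Psi$ is given by \eqref{eq:2.7} with $\theta=\delta/(kt)\cdot t$, i.e.\ $\Psi(\cdot,t)=\Gamma_{N-1}(\cdot,kt/\delta)*\psi$. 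Both representations will be used, one for each regime.

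\emph{The limit $\delta\to0^+$.} For $\uLD{\delta}$ I would apply \eqref{eq:2.13} directly. For $t>T$ and all $x_N\ge0$,
\[
	|\uLD{\delta}(x,t)|\le C\Big(x_N+\frac{t}{\delta}\Big)^{-\frac{N-1}{p}}\|\psi\|_{L^p(\partial\Omega)}\le C\Big(\frac{T}{\delta}\Big)^{-\frac{N-1}{p}}\|\psi\|_{L^p(\partial\Omega)},
\]
which is $O(\delta^{(N-1)/p})$. For $\uLDD{\delta}{k}$ with $k>0$ fixed, the first representation together with $\int\Gamma_{N-1}=1$ gives $|\uLDD{\delta}{k}(x,t)|\le\sup_{z'}|\uLD{\delta}((z',x_N),t)|$, and the same bound follows. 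Equivalently, one can use the third estimate in \eqref{eq:2.12} with boundary datum $\Psi(\cdot,t)$ and $\|\Psi(\cdot,t)\|_{L^p}\le\|\psi\|_{L^p}$ by Young's inequality. The restriction $p<\infty$ is what makes the exponent positive; nothing else is needed here.

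\emph{The limit $k\to\infty$ with $\delta$ fixed.} Here I would exploit the heat kernel instead. Using $\|P(\cdot,s)\|_{L^1(\R^{N-1})}=1$ and Young's inequality in the second representation,
\[
	|\uLDD{\delta}{k}(x,t)|\le\|\Psi(\cdot,t)\|_{L^\infty(\R^{N-1})}\le\Big\|\Gamma_{N-1}\Big(\cdot,\frac{k}{\delta}t\Big)\Big\|_{L^{p'}(\R^{N-1})}\|\psi\|_{L^p(\partial\Omega)}.
\]
By \eqref{eq:2.2} this is at most
\[
	C\Big(\frac{kt}{\delta}\Big)^{-\frac{N-1}{2}\left(1-\frac{1}{p'}\right)}\|\psi\|_{L^p}=C\Big(\frac{kt}{\delta}\Big)^{-\frac{N-1}{2p}}\|\psi\|_{L^p}.
\]
Taking $t>T$ gives the uniform bound $C(kT/\delta)^{-(N-1)/(2p)}$, which is $O(k^{-(N-1)/(2p)})$ as stated. (The ``$\lim$'' in the statement should be read as this $O$-estimate.)

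The only step needing real care is justifying the convolution identity for $y\in\partial\Omega$ and $\psi\in L^p$ rather than $L^\infty$. That requires interchanging the $\tau$- and $z'$-integrals in \eqref{eq:4.1}, which is legitimate by Tonelli since all integrands are nonnegative; $-\partial_{x_N}\Gamma_1\ge0$ for a positive argument. Interchanging with the $y$-integral against $|\psi|$ is likewise Tonelli, and finiteness follows from the resulting bound itself. Everything else is a direct application of \eqref{eq:2.2}, \eqref{eq:2.12}, and \eqref{eq:2.13}.
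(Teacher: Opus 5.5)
Your argument is correct, but it takes a different route from the paper. The paper does not factorize the kernel. It applies H\"older's inequality in $y'$ inside the $\tau$-integral of \eqref{eq:4.1}, which reduces everything to the scalar quantity
\[
\int_0^\infty\left(\frac{k}{\delta}t+\tau\right)^{-\frac{N-1}{2p}}\left(-2\partial_{x_N}\Gamma_1\left(x_N+\frac{t}{\delta},\tau\right)\right)\dee\tau .
\]
It then discards either $\frac{k}{\delta}t$ (for $\delta\to0^+$) or $\tau$ (for $k\to\infty$) in the first factor.

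You instead use the heat--Poisson factorization of Remark~\ref{Remark:4.1}\,(2) and let each factor produce one rate. For $\delta\to0^+$ you use the Poisson decay \eqref{eq:2.13}, averaged against $\Gamma_{N-1}$. For $k\to\infty$ you combine $L^p$--$L^\infty$ smoothing of $\Gamma_{N-1}(\cdot,kt/\delta)$ with $\|P(\cdot,s)\|_{L^1(\mathbb{R}^{N-1})}=1$. The paper's version is more self-contained. Yours recycles known estimates and makes the mechanism behind each rate transparent.

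Your route also handles the $k\to\infty$ case more cleanly. The paper first replaces $-2\partial_{x_N}\Gamma_1$ by $C\tau^{-1}\exp(-(T/\delta)^2/(8\tau))$. Once $\tau$ is dropped from $\frac{k}{\delta}T+\tau$, the integral $\int_0^\infty\tau^{-1}\exp(-(T/\delta)^2/(8\tau))\,\dee\tau$ diverges at $\tau=\infty$. That display becomes correct only if one uses instead the exact normalization $\int_0^\infty-2\partial_{x_N}\Gamma_1(x_N+t/\delta,\tau)\,\dee\tau=1$ from the proof of Lemma~\ref{Lemma:4.1}\,(3). Your identity $\|P(\cdot,s)\|_{L^1}=1$ is precisely that normalization, so your proof needs no such repair.
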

{\bf Proof.}
Let $T>0$. 
By \eqref{eq:2.2} and \eqref{eq:4.1}, we have
\[
	\begin{aligned}
 	& \sup_{(x,t)\in\Omega\times(T,\infty)}
	\left|\,\int_{\partial\Omega} \GLDD{\delta}{k}(x,y,t)\psi(y)\,\dee\sigma(y)\,\right|
	\\
 	& \le C\|\psi\|_{L^p(\partial\Omega)}
	\sup_{(x,t)\in\Omega\times(T,\infty)}
	\int_0^\infty \left(\frac{k}{\delta}t+\tau\right)^{-\frac{N-1}{2p}}(4\pi\tau)^{-\frac{1}{2}}
	\frac{x_N+t/\delta}{\tau}\exp\left(-\frac{|x_N+t/\delta|^2}{4\tau}\right)\,\dee\tau
	\\
 	& \le C\|\psi\|_{L^p(\partial\Omega)}\int_0^\infty 
	\left(\frac{k}{\delta}T+\tau\right)^{-\frac{N-1}{2p}}\tau^{-1}
 	\exp\left(-\frac{(T/\delta)^2}{8\tau}\right)\,\dee\tau.
	\end{aligned}
\]
Combining this with 
\begin{align*}
  &\int_0^\infty 
	\left(\frac{k}{\delta}T+\tau\right)^{-\frac{N-1}{2p}}\tau^{-1}
 	\exp\left(-\frac{(T/\delta)^2}{8\tau}\right)\,\dee\tau\\
    &\le\int_0^\infty 
	\left(\frac{\xi}{\delta^2}\right)^{-\frac{N-1}{2p}}\xi^{-1}
 	\exp\left(-\frac{T^2}{8\xi}\right)\,\dee\xi=O\left(\delta^\frac{N-1}{p}\right)\quad\textrm{as}\quad\delta\to 0^+,\\
    &\int_0^\infty 
	\left(\frac{k}{\delta}T+\tau\right)^{-\frac{N-1}{2p}}\tau^{-1}
 	\exp\left(-\frac{(T/\delta)^2}{8\tau}\right)\,\dee\tau\\
    &\le \int_0^\infty \left(\frac{k}{\delta}T\right)^{-\frac{N-1}{2p}}\tau^{-1}
 	\exp\left(-\frac{(T/\delta)^2}{8\tau}\right)\,\dee\tau=O\left(k^{-\frac{N-1}{2p}}\right)\quad\textrm{as}\quad k\to\infty,
\end{align*}
we complete the proof of Theorem~\ref{Theorem:4.2}.
$\Box$
\vspace{5pt}
\newline
Note that Theorem~\ref{Theorem:4.2} does not necessarily hold in the case $p=\infty$, 
as follows from \eqref{eq:4.5}.
\begin{theorem}
\label{Theorem:4.3}
Let $\psi\in L^{\infty}(\partial\Omega)$. 
Then, for any $L>0$ and $T>0$,  
\begin{equation}
\label{eq:4.12}
	\begin{aligned}
	&\lim_{\delta\to\infty}\sup_{(x,t)\in\Omega_L^c\times(0,T)}|\uLDD{\delta}{k}(x,t)-\uLDi{\psi}(x)|=0
	\quad\mbox{for any fixed $k>0$},
	\\
	&\lim_{\delta\to\infty}\sup_{(x,t)\in\Omega_L^c\times(0,T)}|\uLD{\delta}(x,t)-\uLDi{\psi}(x)|=0.
	\end{aligned}
\end{equation}
Here $\uLDi{\psi}$ is given in \eqref{eq:2.11} and $\Omega_L^c:=\{x\in\overline{\Omega}\,:\,x_N>L\}$.
\end{theorem}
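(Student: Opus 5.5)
We compare the kernels directly. By Lemma~\ref{Lemma:4.1}\,(4), $\GLDD{\delta}{0}(x,y,t)=P(x'-y',x_N+t/\delta)$ for $y\in\partial\Omega$, so $\uLD{\delta}(x,t)=\uLDi{\psi}((x',x_N+t/\delta))$. Moreover, the convolution identity in Remark~\ref{Remark:4.1}\,(2) gives
\[
	\uLDD{\delta}{k}(x,t)=\int_{\R^{N-1}}\Gamma_{N-1}\Bigl(x'-z',\frac{k}{\delta}t\Bigr)\uLD{\delta}((z',x_N),t)\,\dee z'.
\]
Hence both statements in \eqref{eq:4.12} reduce to two estimates on the harmonic function $v:=\uLDi{\psi}$ in the region $\{x_N>L\}$. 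The first is a vertical shift estimate for $|v(x',x_N+s)-v(x',x_N)|$ with $0\le s\le T/\delta$. The second is a tangential smoothing estimate for $\Gamma_{N-1}(\cdot,\theta)*v(\cdot,x_N)-v(\cdot,x_N)$ with $0<\theta\le kT/\delta$.

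For the shift, I use the gradient bound for Poisson integrals of bounded data: $|\nabla v(x)|\le C x_N^{-1}\|\psi\|_{L^\infty(\partial\Omega)}$. It follows from $\int_{\partial\Omega}|\nabla_x P(x'-y',x_N)|\,\dee\sigma(y)\le Cx_N^{-1}$, which holds by the homogeneity of $P$ of degree $1-N$, so that $\nabla P$ is homogeneous of degree $-N$. Then for $x_N>L$ and $t<T$,
\[
	|\uLD{\delta}(x,t)-v(x)|\le \frac{t}{\delta}\sup_{\xi_N\ge L}|\partial_{x_N}v|\le \frac{CT}{\delta L}\|\psi\|_{L^\infty(\partial\Omega)}\to0
\]
as $\delta\to\infty$. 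This proves the second line of \eqref{eq:4.12}.

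For the first line with $k>0$ fixed, I write
\[
	\uLDD{\delta}{k}-v=\Bigl[\Gamma_{N-1}\Bigl(\cdot,\frac{k}{\delta}t\Bigr)*\bigl(\uLD{\delta}(\cdot,x_N,t)-v(\cdot,x_N)\bigr)\Bigr](x')
	+\Bigl[\Gamma_{N-1}\Bigl(\cdot,\frac{k}{\delta}t\Bigr)*v(\cdot,x_N)-v(\cdot,x_N)\Bigr](x').
\]
The first term is bounded by $CT/(\delta L)$, by the previous step and property~($\Gamma$1). For the second term, I write the difference as $\int_0^{kt/\delta}\Gamma_{N-1}(\cdot,\xi)*\Delta' v\,\dee\xi$, as in Remark~\ref{Remark:4.1}\,(2). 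I then use $|\nabla^2 v(x)|\le Cx_N^{-2}\|\psi\|_{L^\infty(\partial\Omega)}$, which is again a homogeneity bound on $\nabla^2P$ integrated over $\partial\Omega$. This bounds the second term by $CkT\delta^{-1}L^{-2}\|\psi\|_{L^\infty(\partial\Omega)}$. An alternative for this term is $\int\Gamma_{N-1}(z',\theta)|v(x'-z',x_N)-v(x',x_N)|\,\dee z'\le C\theta^{1/2}L^{-1}$ via the gradient bound. Either way the second term is $O(\delta^{-1})$ or $O(\delta^{-1/2})$ uniformly on $\Omega_L^c\times(0,T)$.

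The only step requiring care is justifying the uniform derivative bounds for $v$ on $\{x_N>L\}$ with merely bounded $\psi$. This follows from the scaling $\nabla^jP(x)=|x|^{-j}$ times a bounded function of $x/|x|$ for $x_N>0$, integrated against $\psi$. It also needs the interchange of differentiation and integration, which dominated convergence justifies since $x_N\ge L>0$. The remaining manipulations are routine.
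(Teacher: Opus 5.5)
Your proof is correct. It uses the same two-piece splitting as the paper, a tangential-smoothing error plus a vertical-shift error, but carries it out with different tools.

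The paper never passes through the harmonic function $\uLDi{\psi}$. It works with the $\tau$-representation \eqref{eq:4.1} and bounds two kernel differences, uniformly for $x_N>L$ and $t<T$:
\begin{itemize}
\item $\int_{\partial\Omega}|\GLDD{\delta}{k}(x,y,t)-\GLDD{\delta}{0}(x,y,t)|\,\dee\sigma(y)$, by $CkT/\delta$;
\item $\int_{\partial\Omega}|\GLDD{\delta}{0}(x,y,t)-\GLDD{\delta}{0}(x,y,0)|\,\dee\sigma(y)$, by $CT/\delta$.
\end{itemize}
Both bounds come from the Gaussian derivative estimates ($\Gamma$2), exactly as in \eqref{eq:4.11}. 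Lemma~\ref{Lemma:4.1}\,(4) at $t=0$ then identifies the limit with $\uLDi{\psi}$.

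You instead use the factorization in Remark~\ref{Remark:4.1}\,(2) to write $\uLDD{\delta}{k}$ as a tangential heat smoothing of the vertically shifted Poisson extension $\uLDi{\psi}(x',x_N+t/\delta)$. Everything then reduces to the interior bounds $|\nabla^j\uLDi{\psi}(x)|\le Cx_N^{-j}\|\psi\|_{L^\infty(\partial\Omega)}$ for $j=1,2$, which follow from the homogeneity of $P$.

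Your bounds $CT/(\delta L)$ and $CkT/(\delta L^2)$ have the same orders in $\delta$, $k$, $T$ and $L$ as the paper's. In the paper the $L$-dependence sits inside $\int_0^\infty\tau^{-3/2}e^{-L^2/(8\tau)}\,\dee\tau\sim L^{-1}$ and $\int_0^\infty\tau^{-2}e^{-L^2/(8\tau)}\,\dee\tau\sim L^{-2}$.

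The paper's route stays within the Gaussian calculus already set up for Theorem~\ref{Theorem:4.1} and needs neither the factorization nor harmonic-function estimates. Yours is more structural. It shows directly why the limit is the Poisson extension of $\psi$, and it isolates exactly which regularity of $\uLDi{\psi}$ away from $\partial\Omega$ drives the convergence.
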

{\bf Proof.}
Similarly to \eqref{eq:4.11}, we have
\[
	\begin{aligned}
 	& \int_{\partial\Omega}\left|\GLDD{\delta}{k}(x,y,t)-\GLDD{\delta}{0}(x,y,t)\right|\,\dee\sigma(y)
	\\
	& \le \frac{Ckt}{\delta}\int_0^\infty\tau^{-2}\exp\left(-\frac{x_N^2}{8\tau}\right)\,\dee\tau
	\le \frac{CkT}{\delta}\int_0^\infty\tau^{-2}\exp\left(-\frac{L^2}{8\tau}\right)\,\dee\tau
	\le \frac{CkT}{\delta}
	\end{aligned}
\]
for $(x,t)\in\Omega_L^c\times(0,T)$.
Furthermore, by ($\Gamma 1$) and \eqref{eq:4.1}, we have
\[
	\begin{aligned}
 	& \int_{\partial\Omega}\left|\GLDD{\delta}{0}(x,y,t)
	-\GLDD{\delta}{0}(x,y,0)\right|\,\dee\sigma(y)
	\\
 	& \le 2\int_0^t \left|\partial_{x_N}\Gamma_1\left(x_N+\frac{t}{\delta},\tau\right)
	-\partial_{x_N}\Gamma_1\left(x_N,\tau\right)\right|\,\dee\tau
	\\
 	& \le \frac{Ct}{\delta}\int_0^t \tau^{-\frac{3}{2}}\exp\left(-\frac{x_N^2}{8\tau}\right)\,\dee\tau
	\le  \frac{CT}{\delta}\int_0^\infty \tau^{-\frac{3}{2}}\exp\left(-\frac{L^2}{8\tau}\right)\,\dee\tau
	\le\frac{CT}{\delta}
	\end{aligned}
\]
for $(x,t)\in\Omega_L^c\times(0,T)$. 
These, together with Lemma~\ref{Lemma:4.1}~(4), imply \eqref{eq:4.12}, and Theorem~\ref{Theorem:4.3} follows.
$\Box$
\vspace{5pt}

At the end of this section 
we obtain a result concerning the limit of solutions to problems~\eqref{eq:HDD} and \eqref{eq:HD} 
in the limit $\epsilon\to\infty$. 
\begin{theorem}
\label{Theorem:4.4}
Let $(\phi,\psi)\in BC(\Omega)\times L^\infty(\partial\Omega)$.
Let $\delta>0$ and $k>0$ be fixed.
Then, for any compact set $K\subset\Omega$ and $T>0$,
\[
	\begin{aligned}
 	& \lim_{\epsilon\to \infty}\sup_{(x,t)\in K\times(0,T)}|\uHDD{\epsilon}{\delta}{k}(x,t)-\phi(x)|=0,
	\\
 	& \lim_{\epsilon\to \infty}\sup_{(x,t)\in K\times(0,T)}|\uHD{\epsilon}{\delta}(x,t)-\phi(x)|=0. 
	\end{aligned}
\]
\end{theorem}
{\bf Proof.}
Let $K$ be a compact set in $\Omega$ and $T>0$.
For $(x_N,y_N)\in(0,\infty)^2$ and $\tau\in(0,t)$, 
since 
\begin{equation}
\label{eq:4.13}
	\left|x_N+y_N+\frac{\tau}{\delta}\right|^2\ge x_N^2+y_N^2+\left(\frac{\tau}{\delta}\right)^2,
\end{equation}
by \eqref{eq:2.1}, we have
\[
	\begin{aligned}
	0 & \le-2\partial_{x_N}\Gamma_1\left(x_N+y_N+\frac{\tau}{\delta},\frac{t-\tau}{\epsilon}\right)
 	\le C\left(\frac{t-\tau}{\epsilon}\right)^{-1}
	\exp\left(-\frac{\epsilon|x_N+y_N+\tau/\delta|^2}{8(t-\tau)}\right)
	\\
 	& \le C\epsilon^{\frac{1}{2}}\exp\left(-\frac{\epsilon x_N^2}{8t}\right)(t-\tau)^{-\frac{1}{2}}
	\exp\left(-\frac{\epsilon \tau^2}{8\delta^2(t-\tau)}\right)
 	\left(\frac{4\pi(t-\tau)}{\epsilon}\right)^{-\frac{1}{2}}\exp\left(-\frac{\epsilon y_N^2}{8(t-\tau)}\right)
	\\
	& \le C\epsilon^{\frac{1}{2}}\exp\left(-\frac{\epsilon x_N^2}{8t}\right)(t-\tau)^{-\frac{1}{2}}
 	\left(\frac{4\pi(t-\tau)}{\epsilon}\right)^{-\frac{1}{2}}\exp\left(-\frac{\epsilon y_N^2}{8(t-\tau)}\right).
	\end{aligned}
\]
This, together with property~($\Gamma$1), \eqref{eq:3.1}, and \eqref{eq:3.2}, implies that
\[
	\begin{aligned}
 	& \frac{1}{\delta}\int_\Omega |H(x,y,t)|\,\dee y
	\\
 	&  \le C\epsilon^{\frac{1}{2}}\exp\left(-\frac{\epsilon x_N^2}{8t}\right)
 	\int_0^t\int_0^\infty (t-\tau)^{-\frac{1}{2}}\left(\frac{4\pi(t-\tau)}{\epsilon}\right)^{-\frac{1}{2}}
	\exp\left(-\frac{\epsilon y_N^2}{8(t-\tau)}\right)\,\dee y_N\,\dee\tau
	\\
 	&  \le C\epsilon^{\frac{1}{2}}\exp\left(-\frac{\epsilon x_N^2}{8t}\right)
  	\int_0^t (t-\tau)^{-\frac{1}{2}}\,\dee\tau
  	\le C\epsilon^{\frac{1}{2}}t^{\frac{1}{2}}\exp\left(-\frac{\epsilon x_N^2}{Ct}\right)
  	\le C\epsilon^{\frac{1}{2}}T^{\frac{1}{2}}\exp\left(-\frac{\epsilon x_N^2}{CT}\right)
	\end{aligned}
\]
for {$(x,t)\in\Omega\times(0,T)$}. 
Similarly, by \eqref{eq:3.1} and \eqref{eq:4.13}, we obtain
\[
	\begin{aligned}
 	& \frac{1}{\epsilon}\int_{\partial\Omega} |H(x,y,t)|\,\dee\sigma(y)
	\\
 	& \le C\epsilon^{\frac{1}{2}}\left(x_N+\frac{t}{\delta}\right)\exp\left(-\frac{\epsilon x_N^2}{4t}\right)
	\left(\int_0^{t/2}+\int_{t/2}^t\right)
	(t-\tau)^{-\frac{3}{2}}\exp\left(-\frac{\epsilon \tau^2}{4\delta^2(t-\tau)}\right)\,\dee\tau
	\\
	& \le C\epsilon^{\frac{1}{2}}\left(x_N+\frac{t}{\delta}\right)\exp\left(-\frac{\epsilon x_N^2}{4t}\right)
	\\
	& \qquad\times\left\{\int_0^{t/2}(t-\tau)^{-\frac{3}{2}}\,\dee\tau
	+\int_{t/2}^t\left(\frac{t-\tau}{\epsilon t^2}\right)^{-\frac{3}{2}}\epsilon^{-\frac{3}{2}}t^{-3}
	\exp\left(-\frac{\epsilon t^2}{16\delta^2(t-\tau)}\right)\,\dee\tau\right\}
	\\
	 & \le C\epsilon^{\frac{1}{2}}\left(x_N+\frac{t}{\delta}\right)\exp\left(-\frac{\epsilon x_N^2}{4t}\right)
	 \left(t^{-\frac{1}{2}}+\epsilon^{-\frac{3}{2}}t^{-2}\right)
	\end{aligned}
\]
for $(x,t)\in \Omega\times(0,T)$ and $\epsilon>0$. 
These imply that
\[
	\frac{1}{\delta}\int_\Omega H(x,y,t)\phi(y)\,\dee y
	+\frac{1}{\epsilon}\int_{\partial\Omega}H(x,y,t)\psi(y)\,\dee\sigma(y)
	\to 0\quad\mbox{as}\quad\epsilon\to\infty
\]
uniformly for $(x,t)\in K\times(0,T)$. 
This, together with \eqref{eq:1.12} and \eqref{eq:2.5}, implies the desired conclusion. 
Thus, Theorem~\ref{Theorem:4.4} follows.
$\Box$
\vspace{5pt}
\newline
A diagram summarizing the diffusion limits established in this section, 
together with Theorem~\ref{Theorem:1.3}~(5), is given below. 
We use the symbol $\Longrightarrow$ to denote convergence with the optimal rate.
\medskip
\[
	\xymatrix{ \phi \quad & 
	\quad \eqref{eq:HDD}_{\epsilon,\delta,k} \ar[l]_{\epsilon\to \infty} \ar@{=>}[r]^{\epsilon\to 0^+}  \ar@{=>}[d]^{k\to 0^+} \quad & 
	\quad \eqref{eq:LDD}_{\delta,k} \ar[r]^{\delta\to\infty} \ar[rd]^{\delta\to 0^+}_{k\to\infty} \ar@{=>}[d]_{k\to 0^+} \quad & \quad\eqref{eq:LiD}\\
 	& \quad \eqref{eq:HD}_{\epsilon,\delta} \ar@{=>}[r]^{\epsilon\to 0^+} \ar[ul]^{\epsilon\to\infty} \quad & \quad \eqref{eq:LD}_\delta \ar[r]_{\delta\to 0^+} \ar[d]_{\delta\to\infty} \quad & \quad 0\\
 	& & \eqref{eq:LiD} &
	}
\]
\section{Diffusion limits with respect to $\delta$ and $k$}
\label{section:5}
In this section, we study the diffusion limits of solutions to problems~\eqref{eq:HDD} 
and \eqref{eq:HD} with respect to $\delta$. 
As a byproduct, we derive an explicit representation of the fundamental solution to problem~\eqref{eq:HDN} 
(the heat equation in $\Omega$ with a diffusive Neumann boundary condition), 
introduced in Section~1.
A classical solution to problem~\eqref{eq:HDN} is a function~$u$ defined 
on $(\overline\Omega\times[0,\infty))\setminus(\partial\Omega\times\{0\})$ such that 
\[
	u\in {C^{2;1}(\Omega\times(0,\infty))\cap C^{2;0}(\overline{\Omega}\times(0,\infty))
	\cap C(\Omega\times[0,\infty))}
\]
and $u$ satisfies the equations in \eqref{eq:HDN} pointwise.

We first investigate the diffusion limits of solutions to problems~\eqref{eq:HDD} 
and \eqref{eq:HD} in the limit $\delta\to 0^+$, 
and obtain the following theorem. 
Define 
\begin{equation}
\label{eq:5.1}
	\begin{aligned}
	\GHDN{\epsilon}{k}(x,y,t) & :=G_0\left(x,y,\frac{t}{\epsilon}\right)+\hat H(x,y,t),
	\\
	\hat H(x,y,t) & :=
	-2\int_0^\infty \Gamma_{N-1}\left(x'-y',\frac{t}{\epsilon}+k\tau\right)
	\partial_{x_N}\Gamma_1\left(x_N+y_N+\tau,\frac{t}{\epsilon}\right)\,\dee\tau,
	\end{aligned}
\end{equation}
for $(x,y,t)\in D$, where $\epsilon>0$ and $k\ge 0$.
Throughout this section, we set
\[
	Q(R):=\{(x,t)\in \overline{\Omega}\times(0,\infty)\,:\,x_N+t>R\},\qquad R>0.
\]
\begin{theorem}
\label{Theorem:5.1}
Let $(\phi,\psi)\in L^\infty(\Omega)\times L^\infty(\partial\Omega)$. 
Then, for any fixed $\epsilon>0$ and $k>0$, and for any $R>0$,
\begin{equation}
\label{eq:5.2}
	\begin{aligned}
 	& \sup_{(x,t)\in {Q(R)}}
	\left|\uHDD{\epsilon}{\delta}{k}(x,t)-\uHDN{\epsilon}{k}(x,t)\right|=O\left(\delta\right),
	\\
 	& \sup_{(x,t)\in {Q(R)}}
  	\left|\uHD{\epsilon}{\delta}(x,t)-\uHhN{\epsilon}(x,t)\right|=O\left(\delta\right),
	\end{aligned}
\end{equation}
as $\delta\to 0^+$.
Here, $\uHDN{\epsilon}{k}$ is a function on $\overline{\Omega}\times(0,\infty)$ defined by 
\begin{equation}
\label{eq:5.3}
	\uHDN{\epsilon}{k}(x,t):=\int_\Omega \GHDN{\epsilon}{k}(x,y,t)\phi(y)\,\dee y,\quad (x,t)\in \overline{\Omega}\times(0,\infty). 
\end{equation}
In particular, if $\phi\in BC(\Omega)$, 
then $\uHDN{\epsilon}{k}$ is a bounded classical solution to problem~\eqref{eq:HDN}. 
Furthermore, the following properties hold.
\begin{enumerate}[label={\rm(\arabic*)}]
\item
	Let $\epsilon>0$ be fixed. 
	Then, for any $R>0$, 
  	\begin{equation}
	\label{eq:5.4}
  		\sup_{(x,t)\in {Q(R)}}\left|\uHDN{\epsilon}{k}(x,t)-\uHhN{\epsilon}(x,t)\right|=O(k)
		\quad\mbox{as}\quad k\to 0^+.
  	\end{equation}
\item
  	Let $k>0$ be fixed. 
	If $\phi\in BC(\Omega)$,
  	then, for any $T>0$ and any compact set $K\subset\Omega$,
  	\[
		\begin{aligned}
		&
	  	\lim_{\epsilon\to\infty}\sup_{(x,t)\in K\times(0,T)}\left|\, \uHDN{\epsilon}{k}(x,t)-\phi(x)\,\right|=0,
		\\
		& 
	 	\lim_{\epsilon\to\infty}\sup_{(x,t)\in K\times(0,T)}\left|\, \uHhN{\epsilon}(x,t)-\phi(x)\,\right|=0.
		\end{aligned}
  	\]
\end{enumerate}
\end{theorem}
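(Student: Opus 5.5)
The plan is to reduce the whole theorem to the explicit kernels \eqref{eq:1.8}--\eqref{eq:1.9} and \eqref{eq:5.1}. The key observation is that the substitution $\tau=\delta\sigma$ in \eqref{eq:1.8} gives
\[
	\frac{1}{\delta}H(x,y,t)=-2\int_0^{t/\delta}\Gamma_{N-1}\left(x'-y',\frac{t-\delta\sigma}{\epsilon}+k\sigma\right)
	\partial_{x_N}\Gamma_1\left(x_N+y_N+\sigma,\frac{t-\delta\sigma}{\epsilon}\right)\,\dee\sigma .
\]
Formally this tends to $\hat H(x,y,t)$ as $\delta\to0^+$. Likewise, $\frac1\epsilon H(x,y,t)$ on $\partial\Omega$ becomes $\frac{\delta}{\epsilon}$ times an integral of the same kind. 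By \eqref{eq:1.12},
\[
	\uHDD{\epsilon}{\delta}{k}-\uHDN{\epsilon}{k}
	=\int_\Omega\left(\frac1\delta H-\hat H\right)\phi\,\dee y+\frac1\epsilon\int_{\partial\Omega}H\psi\,\dee\sigma .
\]
So I would bound $\sup_{Q(R)}\int_\Omega|\frac1\delta H-\hat H|\,\dee y$ and $\sup_{Q(R)}\frac1\epsilon\int_{\partial\Omega}H\,\dee\sigma$ by $C\delta$. The case $k=0$ is the same computation: Lemma~4.1-type identities and the reflection formula show that $G_0(t/\epsilon)+\hat H|_{k=0}=G_N(t/\epsilon)$, because $-2\int_0^\infty\partial_{x_N}\Gamma_1(x_N+y_N+\tau,s)\,\dee\tau=2\Gamma_1(x_N+y_N,s)$.

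\textbf{The $\psi$-term.} By \eqref{eq:3.8} and property~($\Gamma$1), after the substitution,
\[
	\frac1\epsilon\int_{\partial\Omega}H\,\dee\sigma
	=\frac{\delta}{\epsilon}\int_0^{t/\delta}g_\delta(\sigma)\,\dee\sigma,
	\qquad
	g_\delta(\sigma):=-2\partial_{x_N}\Gamma_1\left(x_N+\sigma,\frac{t-\delta\sigma}{\epsilon}\right)\ge0 .
\]
The $\partial\Omega$-integral of $\Gamma_{N-1}$ is $1$, so only the one-dimensional integral remains, and I will bound it uniformly on $Q(R)$ by splitting $\sigma\in(0,t/(2\delta))$ from $\sigma\in(t/(2\delta),t/\delta)$.
\begin{itemize}
\item On the first range the time is comparable to $t/\epsilon$, and integrating $|\partial_{x_N}\Gamma_1|$ in $\sigma$ gives at most $C(t/\epsilon)^{-1/2}e^{-\epsilon x_N^2/(Ct)}$. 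This is bounded on $Q(R)$: either $t\ge R/2$, or $x_N\ge R/2$ with $t\le R/2$.
\item On the second range the spatial variable is at least $t/(2\delta)$. Property~\eqref{eq:2.1} then gives Gaussian smallness $\exp(-c\,\epsilon t^2/(\delta^2 s))$ with $s\le t/\epsilon$, which dominates the polynomial factors.
\end{itemize}
This yields $O(\delta)$.

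\textbf{The $\phi$-term.} I would split $\frac1\delta H-\hat H$ into three pieces, analogous to $J_1,J_2,J_3$ in the proof of Theorem~\ref{Theorem:4.1}:
\begin{enumerate}
\item the tail $-2\int_{t/\delta}^\infty(\cdots)\,\dee\tau$ of $\hat H$, which is exponentially small as above;
\item the change of the $\Gamma_{N-1}$ time from $t/\epsilon+k\sigma$ to $(t-\delta\sigma)/\epsilon+k\sigma$, written as $\int\partial_s\Gamma_{N-1}$ over an interval of length $\delta\sigma/\epsilon$;
\item the corresponding change in the $\Gamma_1$ time.
\end{enumerate}
For pieces 2 and 3, integrating first in $y'$ by ($\Gamma$1)--($\Gamma$2) produces a factor $\delta\sigma\, s^{-1}$ or $\delta\sigma\, s^{-1}$ times $\partial_s$-bounds. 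Integrating then in $y_N$ via \eqref{eq:2.2}, the weight $\sigma$ is absorbed by the Gaussian decay in $x_N+y_N+\sigma$. This gives $C\delta$ uniformly on $Q(R)$, where the restriction $x_N+t>R$ again prevents the simultaneous degeneration $t\to0$, $x_N\to0$. I expect this bookkeeping near small $s=(t-\delta\sigma)/\epsilon$ to be the main obstacle. I would handle it by the same $\sigma\lessgtr t/(2\delta)$ split as in the $\psi$-term.

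\textbf{Classical solution of \eqref{eq:HDN}.} I would verify the equations directly, as in Lemma~\ref{Lemma:4.1}(2) and \eqref{eq:3.4}.
\begin{itemize}
\item In the interior, $(\epsilon\partial_t-\Delta_x)$ annihilates each integrand in $\hat H$, since both heat kernels carry time $t/\epsilon$ (plus a constant shift) and \eqref{eq:1.2} applies.
\item On $\partial\Omega$, the integrand $F(\tau)$ of $\hat H$ satisfies $\partial_\tau F=k\Delta'F+\partial_{x_N}F$. Hence $-k\Delta'\hat H-\partial_{x_N}\hat H=-\int_0^\infty\partial_\tau F\,\dee\tau=F(0)=\partial_{x_N}G_0(x,y,t/\epsilon)$, and this cancels $-\partial_{x_N}G_0$.
\item The initial trace for $\phi\in BC(\Omega)$ follows as in Theorem~\ref{Theorem:1.1}(5), using the bound $\int_\Omega\hat H\,\dee y\le 2\int_0^\infty\Gamma_1(x_N+y_N,t/\epsilon)\,\dee y_N\to0$ for $x\in\Omega$.
\item Boundedness follows from the same mass bound.
\end{itemize}

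\textbf{Assertions (1) and (2).} For (1), I would repeat the $J_0$ argument of Theorem~\ref{Theorem:1.3}(5) and \eqref{eq:4.11}: write $\Gamma_{N-1}(\cdot,t/\epsilon+k\tau)-\Gamma_{N-1}(\cdot,t/\epsilon)=\int_0^{k\tau}\partial_\xi\Gamma_{N-1}\,\dee\xi$. This gives a bound $Ck\int_0^\infty\tau\,(t/\epsilon)^{-1}|\partial_{x_N}\Gamma_1(x_N+y_N+\tau,t/\epsilon)|\,\dee\tau$, which is integrable in $y_N$ with a constant uniform on $Q(R)$; it is controlled by $t^{-1}\cdot t$-type factors, treated separately for large and small $t$. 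For (2), the estimate $\int_\Omega\hat H\,\dee y\le C\int_{x_N}^\infty\Gamma_1(\xi,t/\epsilon)\,\dee\xi\to0$ holds uniformly on $K\times(0,T)$ as $\epsilon\to\infty$, since $x_N\ge\mathrm{dist}(K,\partial\Omega)>0$. Together with \eqref{eq:2.5} for the $G_0$ and $G_N$ parts, this gives (2).
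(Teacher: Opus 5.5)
Your proposal is correct and is essentially the paper's proof. The paper likewise shows $\int_{\partial\Omega}H\,\dee\sigma(y)\le C\delta$ on $Q(R)$ by splitting at $\tau=t/2$. It decomposes $\delta^{-1}H-\hat H$ into exactly your three pieces (its $J_1,J_2,J_3$, written in $\tau$ rather than $\sigma=\tau/\delta$), and it proves (1), (2) and the classical-solution claim via Lemma~5.1 and the same $\int_0^{k\tau}\partial_\xi\Gamma_{N-1}$ and mass arguments.

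One small correction: on the range $\sigma>t/(2\delta)$ you must keep the full spatial variable $x_N+t/(2\delta)$, as the paper does with the bound $Ct\,(x_N+t/(2\delta))^{-2}$. The Gaussian factor coming from $t/\delta$ alone gives no smallness when $x_N\ge R/2$ and $t\le\delta^2$.
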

By Theorem~\ref{Theorem:5.1},
for any $\psi\in L^p(\partial\Omega)$ with $p\in[1,\infty]$,
the function $\uHDN{\epsilon}{k}$ can be interpreted as a solution to problem~\eqref{eq:HDN}.

For the proof of Theorem~\ref{Theorem:5.1}, we first state the following lemma, 
which shows that $\GHDN{\epsilon}{k}$ is the fundamental solution to problem \eqref{eq:HDN}
(resp.~problem~\eqref{eq:HhN}) when $k>0$ (resp.~$k=0$).
\begin{lemma}
\label{Lemma:5.1}
Let $k\ge 0$ and $\delta>0$ be fixed.
\begin{enumerate}[label={\rm(\arabic*)}]
	\item
    	$\GHDN{\epsilon}{k}(x,y,t)=\GHDN{\epsilon}{k}(y,x,t)$ for $(x,y,t)\in D$.
  	\item
    	For any fixed $y\in\Omega$, $\GHDN{\epsilon}{k}=\GHDN{\epsilon}{k}(x,y,t)$ satisfies 
  	\[
  		\left\{
  		\begin{array}{ll}
  		\epsilon\partial_t \GHDN{\epsilon}{k}-\Delta_x \GHDN{\epsilon}{k}=0 
		& \mbox{in}\quad\Omega\times(0,\infty),
		\vspace{5pt}\\
  		-k\Delta'_x \GHDN{\epsilon}{k}-\partial_{x_N} \GHDN{\epsilon}{k}=0 
		& \mbox{on}\quad \partial\Omega\times(0,\infty),
  		\end{array}
  		\right.
  	\]
  	as a function of the variables $(x,t)\in \overline{\Omega}\times(0,\infty)$. 
  	\item
  	$\GHDN{\epsilon}{k}$ satisfies
  	\begin{equation}
  	\label{eq:5.5}
  		\int_\Omega \GHDN{\epsilon}{k}(x,y,t)\,\dee y=1,\quad (x,t)\in \overline{\Omega}\times(0,\infty).
  	\end{equation}
  	Furthermore, if $\phi\in BC(\Omega)$, then  
	\begin{equation}\label{eq:5.6}
	\lim_{\substack{(z,t)\in Q,\\ (z,t)\to(x,0)}}
	\int_\Omega \GHDN{\epsilon}{k}(z,y,t)\phi(y)\,\dee y=\phi(x),\quad x\in\Omega.
	\end{equation}
	Here $Q=(\overline{\Omega}\times[0,\infty))\setminus(\Omega\times\{0\})$.
  	\item
  	If $k=0$, then 
  	\[
  		\GHDN{\epsilon}{k}(x,y,t)=G_N\left(x,y,\frac{t}{\epsilon}\right),\quad (x,y,t)\in D.
  	\]
\end{enumerate}
\end{lemma}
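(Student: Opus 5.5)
Each assertion of Lemma~\ref{Lemma:5.1} can be checked by hand on the explicit formula \eqref{eq:5.1}, in the same way as Lemma~\ref{Lemma:4.1} and Theorem~\ref{Theorem:1.1}.

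\emph{Assertion (1).} $G_0$ is symmetric. In $\hat H$ the variables enter only through $|x'-y'|$ and $x_N+y_N$, so $\hat H$ is symmetric as well.

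\emph{Assertion (4).} Put $k=0$. Then $\Gamma_{N-1}$ does not depend on $\tau$. Since $\partial_{x_N}\Gamma_1(x_N+y_N+\tau,s)=\partial_\tau\Gamma_1(x_N+y_N+\tau,s)$, the $\tau$-integral equals $2\Gamma_1(x_N+y_N,t/\epsilon)$. Hence
\[
\hat H=2\Gamma_{N-1}\left(x'-y',\frac{t}{\epsilon}\right)\Gamma_1\left(x_N+y_N,\frac{t}{\epsilon}\right)=2\Gamma_N\left(x-y^*,\frac{t}{\epsilon}\right).
\]
Adding $G_0$ gives $G_N(x,y,t/\epsilon)$ by \eqref{eq:2.10}.

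\emph{Assertion (2), interior equation.} For fixed $\tau$, write
\[
F(\tau):=\Gamma_{N-1}\left(x'-y',\frac{t}{\epsilon}+k\tau\right)\partial_{x_N}\Gamma_1\left(x_N+y_N+\tau,\frac{t}{\epsilon}\right).
\]
Apply $\epsilon\partial_t-\Delta_x$ to $F$.
\begin{itemize}
\item The $x_N$-part cancels, because $\Gamma_1$ solves the heat equation in the time variable $t/\epsilon$.
\item The $x'$-part leaves
\[
(\partial_s-\Delta')\Gamma_{N-1}(x'-y',s)\big|_{s=t/\epsilon+k\tau}=0.
\]
Here I use that $\partial_t$ acting on $\Gamma_{N-1}(\cdot,t/\epsilon+k\tau)$ produces $\epsilon^{-1}\partial_s$.
\end{itemize}
So each integrand is caloric. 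Differentiation under the integral is justified by ($\Gamma$2), since $x_N+y_N+\tau\ge\tau$ gives Gaussian decay in $\tau$.

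\emph{Assertion (2), boundary condition.} This is the step I expect to be the main obstacle. I will mimic \eqref{eq:3.4}. Compute $-k\Delta'_x F-\partial_{x_N}F$:
\begin{itemize}
\item $k\Delta'\Gamma_{N-1}(\cdot,t/\epsilon+k\tau)=\partial_\tau\Gamma_{N-1}$.
\item $\partial_{x_N}^2\Gamma_1(x_N+y_N+\tau,\cdot)=\partial_\tau\partial_{x_N}\Gamma_1$.
\end{itemize}
Therefore
\[
-k\Delta'_x\hat H-\partial_{x_N}\hat H=2\int_0^\infty\partial_\tau F(\tau)\,\dee\tau=-2F(0),
\]
because $F(\tau)\to0$ as $\tau\to\infty$. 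On the other hand, at $x_N=0$,
\[
-2F(0)=-2\Gamma_{N-1}\left(x'-y',\frac{t}{\epsilon}\right)\partial_{x_N}\Gamma_1\left(y_N,\frac{t}{\epsilon}\right)=\partial_{x_N}G_0\left(x,y,\frac{t}{\epsilon}\right),
\]
using \eqref{eq:1.3}. Combined with $k\Delta'_xG_0=0$ on $\partial\Omega$, this gives $-k\Delta'_xV-\partial_{x_N}V=0$. The signs need to be checked carefully here.

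\emph{Assertion (3), mass identity \eqref{eq:5.5}.} Integrate in $y$, using ($\Gamma$1) to remove $\Gamma_{N-1}$:
\[
\int_\Omega\hat H\,\dee y=-2\int_0^\infty\!\!\int_0^\infty\partial_\tau\Gamma_1\left(x_N+y_N+\tau,\frac{t}{\epsilon}\right)\dee\tau\,\dee y_N=2\int_0^\infty\Gamma_1\left(x_N+y_N,\frac{t}{\epsilon}\right)\dee y_N=2\int_{x_N}^\infty\Gamma_1\left(\xi,\frac{t}{\epsilon}\right)\dee\xi.
\]
By \eqref{eq:3.6}, $\int_\Omega G_0\,\dee y=\int_{-x_N}^{x_N}\Gamma_1(\xi,t/\epsilon)\,\dee\xi$, since $G_0$ vanishes on $\partial\Omega$. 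The two pieces add up to $\int_{\mathbb R}\Gamma_1=1$.

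\emph{Assertion (3), initial trace \eqref{eq:5.6}.} Fix $x\in\Omega$ and $R<x_N/2$. For $z$ near $x$ and $t\to0$:
\begin{itemize}
\item $\int_{B^+_R(x)}G_0(z,y,t/\epsilon)\,\dee y\to1$.
\item $\int_\Omega\hat H(z,y,t)\,\dee y\le2\int_{z_N}^\infty\Gamma_1(\xi,t/\epsilon)\,\dee\xi\to0$.
\end{itemize}
Since $V>0$, combining this with \eqref{eq:5.5} gives $\int_{\Omega\setminus B^+_R(x)}V\,\dee y\to0$. The standard splitting, as in the proof of Theorem~\ref{Theorem:1.1}(5), together with the continuity of $\phi$, then yields \eqref{eq:5.6}.
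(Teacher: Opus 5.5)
Your proposal is correct and follows essentially the same route as the paper. The common steps are the telescoping identity $(k\Delta'_x+\partial_{x_N})F=\partial_\tau F$ for the boundary condition (your signs come out exactly right), direct integration via ($\Gamma$1) for \eqref{eq:5.5}, and the $k=0$ collapse $\hat H=2\Gamma_N(x-y^*,t/\epsilon)$; the only cosmetic difference is that for \eqref{eq:5.6} the paper applies \eqref{eq:2.5} to the $G_0$-part and uses $\hat H\ge0$ with $\int_\Omega\hat H\,\dee y\to0$, whereas you redo the splitting argument for $V$ directly, and both work.
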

{\bf Proof.}
Assertion~(1) immediately follows from \eqref{eq:5.1}. 
For any fixed $y\in\Omega$, 
by \eqref{eq:5.1}, we have
\[
	\epsilon \partial_t \hat H(x,y,t)-\Delta_x \hat H(x,y,t)=0,
	\quad (x,t)\in\Omega\times(0,\infty),
\]
and
\[
	\begin{aligned}
 	& k\Delta_x' \hat H(x,y,t)+\partial_{x_N}\hat H(x,y,t)\,\biggr|_{x_N=0}
	\\
 	& =-2\int_0^\infty
	\partial_\tau\left(\Gamma_{N-1}\left(x'-y',\frac{t}{\epsilon}+k\tau\right)
	\partial_{y_N}\Gamma_1\left(y_N+\tau,\frac{t}{\epsilon}\right)\right)\,\dee\tau
	\\
 	& =2\Gamma_{N-1}\left(x'-y',\frac{t}{\epsilon}\right)
	\partial_{y_N}\Gamma_1\left(y_N,\frac{t}{\epsilon}\right)
 	=-\partial_{x_N}G_0\left(x,y,\frac{t}{\epsilon}\right)\,\biggr|_{x_N=0}
	\end{aligned}
\]
for $(x,t)\in \partial\Omega\times(0,\infty)$.
These, together with \eqref{eq:5.1}, imply assertion~(2).

We prove assertion~(3).  
It follows from property~($\Gamma$1), \eqref{eq:1.3}, and \eqref{eq:5.1}, that
\[
	\begin{aligned}
 	& \int_\Omega \GHDN{\epsilon}{k}(x,y,t)\,\dee y\\
 	 & =\int_0^\infty
	\left\{\Gamma_1\left(x_N-y_N,\frac{t}{\epsilon}\right) 
	-\Gamma_1\left(x_N+y_N,\frac{t}{\epsilon}\right)\right\}\,\dee y_N
	\\
 	& \qquad\qquad\qquad\qquad\qquad
 	-2\int_0^\infty\int_0^\infty 
	\partial_{y_N}\Gamma_1\left(x_N+y_N+\tau,\frac{t}{\epsilon}\right)\,\dee y_N\,\dee\tau
	\\
 	& =\left(\int_{-\infty}^{x_N}-\int_{x_N}^\infty\right)\Gamma_1\left(y_N,\frac{t}{\epsilon}\right)\,\dee y_N
 	+2\int_0^\infty \Gamma_1\left(x_N+\tau,\frac{t}{\epsilon}\right)\,\dee\tau\\
 	& =\int_{-\infty}^\infty \Gamma_1\left(\tau,\frac{t}{\epsilon}\right)\,\dee\tau=1
	\end{aligned}
\]
for $(x,t)\in \overline\Omega\times(0,\infty)$, 
which implies \eqref{eq:5.5}.
Similarly, it follows that, for $x\in\Omega$,
\[
	\begin{aligned}
	\int_\Omega \hat H(x,y,t)\,\dee y
 	& =-2\int_0^\infty\int_0^\infty 
	\partial_{y_N}\Gamma_1\left(x_N+y_N+\tau,\frac{t}{\epsilon}\right)\,\dee y_N\,\dee\tau
	\\
 	& =2\int_0^\infty \Gamma_1\left(x_N+\tau,\frac{t}{\epsilon}\right)\,\dee\tau\to 0
	\quad\mbox{as}\quad t\to 0^+,
	\end{aligned}
\]
which, together with the positivity of $\hat{H}$ and \eqref{eq:2.5}, implies that
\[
	\lim_{\substack{(z,t)\in Q,\\ (z,t)\to(x,0)}}\int_\Omega \GHDN{\epsilon}{k}(z,y,t)\phi(y)\,\dee y
	=\lim_{\substack{(z,t)\in Q,\\ (z,t)\to(x,0)}}\int_\Omega 
	G_0\left(z,y,\frac{t}{\epsilon}\right)\phi(y)\,\dee y=\phi(x),\quad x\in\Omega,
\]
if $\phi\in BC(\Omega)$. 
Hence, \eqref{eq:5.6} holds. 
Thus, assertion~(3) holds.

We prove assertion~(4). 
Let $k=0$. 
Since
\[
	\begin{aligned}
 	& \int_0^\infty \Gamma_{N-1}\left(x'-y',\frac{t}{\epsilon}\right)
	\partial_{x_N}\Gamma_1\left(x_N+y_N+\tau,\frac{t}{\epsilon}\right)\,\dee\tau
	\\
 	& =\int_0^\infty \Gamma_{N-1}\left(x'-y',\frac{t}{\epsilon}\right)
	\partial_\tau\Gamma_1\left(x_N+y_N+\tau,\frac{t}{\epsilon}\right)\,\dee\tau
	\\
 	& =-\Gamma_{N-1}\left(x'-y',\frac{t}{\epsilon}\right)\Gamma_1\left(x_N+y_N,\frac{t}{\epsilon}\right)
 	=-\Gamma_N\left(x-y^*,\frac{t}{\epsilon}\right),
	\end{aligned}
\]
by \eqref{eq:1.3}, \eqref{eq:2.10}, and \eqref{eq:5.1}, we obtain 
\[
	\GHDN{\epsilon}{k}(x,y,t)
 	=\Gamma_N\left(x-y,\frac{t}{\epsilon}\right)+\Gamma_N\left(-y^*,\frac{t}{\epsilon}\right)
 	=G_N\left(x,y,\frac{t}{\epsilon}\right)
\]
for $(x,y,t)\in D$. 
Thus, assertion~(4) holds. 
This completes the proof of Lemma~\ref{Lemma:5.1}.
$\Box$
\medskip

\noindent
{\bf Proof of Theorem~\ref{Theorem:5.1}.}
Fix $\epsilon>0$ and $k\ge 0$. 
Given $(\phi,\psi)\in L^\infty(\Omega)\times L^\infty(\partial\Omega)$,  
let $u$ be a solution to \eqref{eq:HDD} (resp. \eqref{eq:HD}) when $k>0$ (resp. $k=0$).
{If $\psi\in BC(\Omega)$, then Lemma~{\ref{Lemma:5.1}} implies that
the function $\uHDN{\epsilon}{k}$ defined by \eqref{eq:5.3} is a bounded classical solution to problem~{\eqref{eq:HDN}}.}

We prove \eqref{eq:5.2}. 
Let $L>0$ and $R>0$. 
Applying properties~($\Gamma$1) and ($\Gamma$2) to \eqref{eq:1.8}, we obtain
\[
	\begin{aligned}
	&\int_{\partial\Omega}H(x,y,t)\,\dee\sigma(y)
	\\
	& 
	\le C\left(\int_0^{t/2}+\int_{t/2}^t\right)\left(\frac{t-\tau}{\epsilon}\right)^{-1}
	\exp\left(-\frac{\epsilon(x_N+\tau/\delta)^2}{8(t-\tau)}\right)\,\dee\tau
	\\
	& 
	\le C\left(\frac{t}{2\epsilon}\right)^{-\frac{1}{2}}\exp\left(-\frac{\epsilon x_N^2}{8t}\right)
	\int_0^{t/2} \delta\left(\frac{\delta^2(t-\tau)}{\epsilon}\right)^{-\frac{1}{2}}
	\exp\left(-\frac{\epsilon \tau^2}{8\delta^2(t-\tau)}\right)\,\dee\tau
	\\
	& \qquad\quad
	+C\int_{t/2}^t \left(x_N+\frac{\tau}{\delta}\right)^{-2}
	\frac{\epsilon(x_N+\tau/\delta)^2}{t-\tau}
	\exp\left(-\frac{\epsilon(x_N+\tau/\delta)^2}{8(t-\tau)}\right)\,\dee\tau
	\\
	& \le C\delta\left(\frac{t}{2\epsilon}\right)^{-\frac{1}{2}}\exp\left(-\frac{\epsilon x_N^2}{8t}\right)
	+Ct\left(x_N+\frac{t}{2\delta}\right)^{-2}
	\end{aligned}
\]
for $(x,t)\in\overline{\Omega}\times(0,\infty)$. 
Since
\begin{equation}
\label{eq:5.7}
	\begin{aligned}
 	& \int_{\partial\Omega}H(x,y,t)\,\dee\sigma(y)
	\le C\delta\left(\frac{t}{2\epsilon}\right)^{-\frac{1}{2}}\exp\left(-\frac{\epsilon R^2}{32t}\right)
	 +C\delta\le C\delta\quad\mbox{if}\quad x_N\ge\frac{R}{2},
	 \\
 	& \int_{\partial\Omega}H(x,y,t)\,\dee\sigma(y)
 	\le C\delta \quad\mbox{if}\quad t\ge\frac{R}{2},
	\end{aligned}
\end{equation}
for $(x,t)\in{Q(R)}$ and $\delta\in(0,1)$, we obtain 
\begin{equation}
\label{eq:5.8}
	\int_{\partial\Omega}H(x,y,t)\,\dee\sigma(y)\le C\delta,\quad (x,t)\in Q(R),\quad \delta\in(0,1).
\end{equation}
It also follows from \eqref{eq:1.8} and \eqref{eq:5.1} that 
\[
	\frac{1}{\delta}H(x,y,t) =J_1(x,y,t)+J_2(x,y,t)+J_3(x,y,t)+\hat H(x,y,t)
\]
for $(x,y,t)\in D$, where
\[
	\begin{aligned}
	J_1(x,y,t) & :=
	-\frac{2}{\delta}\int_0^t 
	\left\{\Gamma_{N-1}\left(x'-y',\frac{t-\tau}{\epsilon}+\frac{k}{\delta}\tau\right)
	-\Gamma_{N-1}\left(x'-y',\frac{t}{\epsilon}+\frac{k}{\delta}\tau\right)\right\}
	\\
 	& \qquad\qquad
 	\times \partial_{x_N}\Gamma_1\left(x_N+y_N+\frac{\tau}{\delta},\frac{t-\tau}{\epsilon}\right)\,\dee\tau,
	\\
	J_2(x,y,t) & := -\frac{2}{\delta}\int_0^t \Gamma_{N-1}\left(x'-y',\frac{t}{\epsilon}+\frac{k}{\delta}\tau\right)
	\\
 	& \qquad\qquad
 	\times\left\{\partial_{x_N}\Gamma_1\left(x_N+y_N+\frac{\tau}{\delta},\frac{t-\tau}{\epsilon}\right)
	-\partial_{x_N}\Gamma_1\left(x_N+y_N+\frac{\tau}{\delta},\frac{t}{\epsilon}\right)\right\}\,\dee\tau,
	\\
 	J_3(x,y,t) & :=-\frac{2}{\delta}\int_0^t \Gamma_{N-1}\left(x'-y',\frac{t}{\epsilon}+\frac{k}{\delta}\tau\right)
 	\partial_{x_N}\Gamma_1\left(x_N+y_N+\frac{\tau}{\delta},\frac{t}{\epsilon}\right)\,\dee\tau
	-\hat H(x,y,t)
	\\
 	& =2\int_{t/\delta}^\infty \Gamma_{N-1}\left(x'-y',\frac{t}{\epsilon}+k\tau\right)
	\partial_{x_N}\Gamma_1\left(x_N+y_N+\tau,\frac{t}{\epsilon}\right)\,\dee\tau.
	\end{aligned}
\]
For $J_1$, since 
$$
	J_1(x,y,t)=\frac{2}{\delta}
	\int_0^t \int^{t/\epsilon}_{(t-\tau)/\epsilon} \partial_\xi\Gamma_{N-1}\left(x'-y',\xi+\frac{k}{\delta}\tau\right)
	\partial_{x_N}\Gamma_1\left(x_N+y_N+\frac{\tau}{\delta},\frac{t-\tau}{\epsilon}\right)\,\dee\xi\,\dee\tau
$$
for $(x,y,t)\in D$, 
by \eqref{eq:2.2}, we have
\[
	\begin{aligned}
 	& \int_\Omega |J_1(x,y,t)|\,dy
	\\
 	& \le \frac{C}{\delta}\int_0^t\int_0^\infty\int^{t/\epsilon}_{(t-\tau)/\epsilon} 
	\left(\xi+\frac{k}{\delta}\tau\right)^{-1}
	\left(-\partial_{y_N}\Gamma_1\left(x_N+y_N+\frac{\tau}{\delta},\frac{t-\tau}{\epsilon}\right)\right)
	\,\dee\xi\,\dee y_N\,\dee\tau
	\\
 	& \le\frac{C}{\delta}\int_0^t\frac{\epsilon}{t-\tau}\frac{\tau}{\epsilon}
	\Gamma_1\left(x_N+\frac{\tau}{\delta},\frac{t-\tau}{\epsilon}\right)\,\dee\tau
	\\
 	& \le \frac{C}{\epsilon\delta}\left(\int_0^{t/2}+ \int_{t/2}^t\right)\tau\left(\frac{t-\tau}{\epsilon}\right)^{-\frac{3}{2}}
	 \exp\left(-\frac{\epsilon(x_N+\tau/\delta)^2}{4(t-\tau)}\right)\,\dee\tau
	 \\
	 & \le\frac{C}{\delta}
	 \int_0^{t/2} \frac{\delta x_N+\tau}{\epsilon}\left(\frac{t}{2\epsilon}\right)^{-\frac{3}{2}}
	 \exp\left(-\frac{\epsilon(x_N+\tau/\delta)^2}{4(t-\tau)}\right)\,\dee\tau\\
	  & \qquad
	  +\frac{Ct}{\epsilon\delta}\int_{t/2}^t
	 \left(x_N+\frac{\tau}{\delta}\right)^{-3}\left(\frac{\epsilon(x_N+\tau/\delta)^2}{t-\tau}\right)^{\frac{3}{2}}
	 \exp\left(-\frac{\epsilon(x_N+\tau/\delta)^2}{4(t-\tau)}\right)\,\dee\tau\\
	 & \le\frac{C\delta}{\epsilon^{\frac{1}{2}}}t^{-\frac{1}{2}}\exp\left(-\frac{\epsilon x_N^2}{8t}\right)
	 \int_0^{t/2} \frac{\epsilon^{\frac{1}{2}}(\delta x_N+\tau)}{\delta t^{\frac{1}{2}}}
	 \left(\frac{\delta^2 t}{\epsilon}\right)^{-\frac{1}{2}}
	 \exp\left(-\frac{\epsilon(\delta x_N+\tau)^2}{8\delta^2t}\right)\,\dee\tau\\
	  & \hspace{8cm}
	  +\frac{Ct}{\epsilon\delta}\int_{t/2}^t\left(x_N+\frac{\tau}{\delta}\right)^{-3}\,\dee\tau\\
	  & \le\frac{C\delta}{\epsilon^{\frac{1}{2}}}t^{-\frac{1}{2}}\exp\left(-\frac{\epsilon x_N^2}{8t}\right)
	 \int_0^{t/2}\left(\frac{\delta^2 t}{\epsilon}\right)^{-\frac{1}{2}}
	 \exp\left(-\frac{\epsilon\tau^2}{16\delta^2t}\right)\,\dee\tau
	  +\frac{Ct}{\epsilon\delta}\left(x_N+\frac{t}{2\delta}\right)^{-2}\\
	  & \le \frac{C\delta}{\epsilon^{\frac{1}{2}}} t^{-\frac{1}{2}}\exp\left(-\frac{\epsilon x_N^2}{8t}\right)
	  +\frac{Ct}{\epsilon}\left(x_N+\frac{t}{2\delta}\right)^{-2}
	\end{aligned}
\]
for $(x,t)\in\overline{\Omega}\times(0,\infty)$. 
Then we argue as in \eqref{eq:5.7} to obtain 
\[
	\int_\Omega |J_1(x,y,t)|\,dy\le C\delta,\quad (x,t)\in Q(R),\quad\delta\in(0,1).
\]
For $J_2$, since
\[
	J_2(x,y,t)
	=\frac{2}{\delta}\int_0^t\int_{(t-\tau)/\epsilon}^{t/\epsilon} 
	\Gamma_{N-1}\left(x'-y',\frac{t}{\epsilon}+\frac{k}{\delta}\tau\right)
 	\partial_{x_N}\partial_\xi \Gamma_1\left(x_N+y_N+\frac{\tau}{\delta},\xi\right)\,\dee\xi\,\dee\tau
\]
for $(x,y,t)\in D$ and
\[
	|\partial_t\partial_\eta\Gamma_1(\eta,t))|
	\le Ct^{-\frac{5}{2}}\eta\left(1+\frac{\eta^2}{t}\right)e^{-\frac{\eta^2}{4t}}
	\le Ct^{-\frac{5}{2}}\eta e^{-\frac{\eta^2}{8t}}
	=-4Ct^{-\frac{3}{2}}\partial_\eta\left(e^{-\frac{\eta^2}{8t}}\right)
\]
for $(\eta,t)\in{\mathbb R}\times(0,\infty)$,
by property~($\Gamma$1), we obtain
\[
	\begin{aligned}
 	& \int_\Omega |J_2(x,y,t)|\,dy
	\\
 	& \le -\frac{C}{\delta}\int_0^t\int_{(t-\tau)/\epsilon}^{t/\epsilon}\int_0^\infty
 	\xi^{-\frac{3}{2}}\partial_{y_N}\exp\left(-\frac{(x_N+y_N+\tau/\delta)^2}{8\xi}\right)\,\dee y_N\,\dee\xi\,\dee\tau
	\\
  	& =\frac{C}{\delta}\left(\int_0^{t/2}+\int_{t/2}^t\right)\int_{(t-\tau)/\epsilon}^{t/\epsilon}
 	\xi^{-\frac{3}{2}}\exp\left(-\frac{(x_N+\tau/\delta)^2}{8\xi}\right)\,\dee\xi\,\dee\tau
	\\
	& \le\frac{C}{\delta}\exp\left(-\frac{\epsilon x_N^2}{8t}\right)
       \int_0^{t/2} \left(\frac{t}{2\epsilon}\right)^{-\frac{3}{2}}\frac{\tau}{\epsilon}
       \exp\left(-\frac{\epsilon\tau^2}{8\delta^2t}\right)\,\dee\tau\\
        & \qquad
        +\frac{C}{\delta}\int_{t/2}^t\int_{(t-\tau)/\epsilon}^{t/\epsilon}
       \left(x_N+\frac{\tau}{\delta}\right)^{-3}
       \left(\frac{(x_N+\tau/\delta)^2}{\xi}\right)^{\frac{3}{2}}\exp\left(-\frac{(x_N+\tau/\delta)^2}{8\xi}\right)\,\dee\xi\,\dee\tau\\
       & \le\frac{C}{\delta}\epsilon^{-\frac{1}{2}}\delta^2t^{-\frac{1}{2}}\exp\left(-\frac{\epsilon x_N^2}{8t}\right)
       \int_0^\infty \eta\exp(-\eta^2)\,\dee\eta
        +\frac{C}{\delta}\int_{t/2}^t\left(x_N+\frac{\tau}{\delta}\right)^{-3}\frac{\tau}{\epsilon}\,\dee\tau\\
        &\le \frac{C\delta}{\epsilon^{\frac{1}{2}}} t^{-\frac{1}{2}}\exp\left(-\frac{\epsilon x_N^2}{8t}\right)
         +\frac{Ct}{\epsilon}\left(x_N+\frac{t}{2\delta}\right)^{-2}
    \end{aligned}
\]
for $(x,t)\in\overline{\Omega}\times(0,\infty)$. 
Then we argue as in \eqref{eq:5.7} to obtain 
\[
	\int_\Omega |J_2(x,y,t)|\,dy\le C\delta,\quad (x,t)\in Q(R),\quad\delta\in(0,1).
\]
Similarly, 
for $J_3$, it follows from property~($\Gamma$1) that
\begin{equation}
\label{eq:5.9}
	\begin{aligned}
	&
	\int_\Omega |J_3(x,y,t)|\,\dee y
	\\
 	& =-2\int_{t/\delta}^\infty\int_0^\infty 
	\partial_{y_N}\Gamma_1\left(x_N+y_N+\tau,\frac{t}{\epsilon}\right)\,\dee y_N\,\dee\tau
	\\
 	& =2\int_{t/\delta}^\infty\Gamma_1\left(x_N+\tau,\frac{t}{\epsilon}\right)\,\dee\tau
 	\le C\left(\frac{t}{\epsilon}\right)^{-\frac{1}{2}}
	\int_{t/\delta}^\infty \exp\left(-\frac{\epsilon {{(x_N+\tau)}}^2}{4t}\right)\,\dee\tau
	\\
 	& \le C\exp\left(-\frac{\epsilon{(\delta x_N+t)^2}}{{8\delta^2t}}\right)
	\le C\exp\left(-\frac{\epsilon(\delta x_N+t)}{{8\delta^2}}\right)
	\end{aligned}
\end{equation}
for $(x,t)\in\overline{\Omega}\times(0,\infty)$. 
Since
\begin{align*}
 & \int_\Omega |J_3(x,y,t)|\,\dee y\le C\exp\left(-\frac{\epsilon R}{16\delta}\right)\quad\,\,\mbox{if}\quad x_N\ge\frac{R}{2},\\
 & \int_\Omega |J_3(x,y,t)|\,\dee y\le C\exp\left(-\frac{\epsilon R}{16\delta^2}\right)\quad\mbox{if}\quad t\ge\frac{R}{2},\\
\end{align*}
we obtain 
\[
	\int_\Omega |J_3(x,y,t)|\,\dee y\le C\exp\left(-\frac{\epsilon R}{16\delta}\right)\le C\delta,
\quad 
(x,t)\in Q(R),\quad \delta\in(0,1).
\]
Combining these estimates with \eqref{eq:1.8} and \eqref{eq:5.8}, we obtain 
\[
	\begin{aligned}
	u(x,t)
 	& =\int_\Omega G_0\left(x,y,\frac{t}{\epsilon}\right)\phi(y)\,\dee y
	+\int_\Omega \hat H(x,y,t)\phi(y)\,\dee y+O(\delta)
	\\
 	& =\int_\Omega \GHDN{\epsilon}{k}(x,y,t)\phi(y)\,\dee y+O(\delta)
	\quad\mbox{as}\quad\delta\to 0^+
	\end{aligned}
\]
uniformly for $(x,t)\in{Q(R)}$. 
Hence, \eqref{eq:5.2} holds. Here we use Lemma~\ref{Lemma:5.1}\,(4) when $k=0$. 

We prove assertion~(1). 
Let $\epsilon>0$ be fixed.
For any $T>0$,
it follows from Lemma~\ref{Lemma:5.1}\,(4) and \eqref{eq:5.1} that 
\begin{equation}
\label{eq:5.10}
	\begin{aligned}
 	& \GHDN{\epsilon}{k}(x,y,t)-\GHDN{\epsilon}{0}(x,y,t)
	\\
 	& =-2\int_0^\infty \left\{\Gamma_{N-1}\left(x'-y',\frac{t}{\epsilon}+k\tau\right)
	-\Gamma_{N-1}\left(x'-y',\frac{t}{\epsilon}\right)\right\}
	\partial_{x_N}\Gamma_1\left(x_N+y_N+\tau,\frac{t}{\epsilon}\right)\,\dee\tau
	\\
 	& =-2\int_0^\infty\int_0^{k\tau} \partial_\xi\Gamma_{N-1}\left(x'-y',\frac{t}{\epsilon}+\xi\right)
 	\partial_{x_N}\Gamma_1\left(x_N+y_N+\tau,\frac{t}{\epsilon}\right)\,\dee\xi\,\dee\tau
	\end{aligned}
\end{equation}
for $(x,y,t)\in D$ and $k>0$, which, together with \eqref{eq:2.2}, implies that
\[
	\begin{aligned}
 	& \int_\Omega\left|\GHDN{\epsilon}{k}(x,y,t)-\GHDN{\epsilon}{0}(x,y,t)
	\right| |\phi(y)|\,\dee y
	\\
 	& \le C\|\phi\|_{L^\infty(\Omega)}
	\int_0^\infty\int_0^\infty\int_0^{k\tau}\left(\frac{t}{\epsilon}+\xi\right)^{-1}
	\left(-\partial_{x_N}\Gamma_1\left(x_N+y_N+\tau,\frac{t}{\epsilon}\right)\right)\,\dee\xi\,\dee y_N\,\dee\tau
	\\
 	& \le \frac{Ck\epsilon}{t}\int_0^\infty\tau\Gamma_1\left(x_N+\tau,\frac{t}{\epsilon}\right)\,\dee\tau
 	\\
    	& \le Ck\left(\frac{t}{\epsilon}\right)^{-\frac{1}{2}}
    	\int_0^\infty \left(\frac{t}{\epsilon}\right)^{-\frac{1}{2}}\left(\frac{\epsilon\tau^2}{t}\right)^{\frac{1}{2}}
	\exp\left(-\frac{\epsilon(x_N+\tau)^2}{4t}\right)\,\dee\tau
    	\le Ck\left(\frac{t}{\epsilon}\right)^{-\frac{1}{2}}\exp\left(-\frac{\epsilon x_N^2}{4t}\right)
    	\le Ck
	\end{aligned}
\]
for $(x,t)\in{Q(R)}$ and $k>0$. 
Here we used a similar argument as in \eqref{eq:5.7}.
Thus assertion~(1) holds.

It remains to prove assertion~(2). 
Let $k>0$ be fixed.
Let $K$ be a compact set in $\Omega$ and $T>0$.
For $(x_N,y_N)\in(0,\infty)^2$ and $\tau\in(0,t)$, 
since $|x_N+y_N+\tau|^2\ge x_N^2+y_N^2+\tau^2$, 
by \eqref{eq:2.1}, we have
\[
	\begin{aligned}
	0 & \le-2\partial_{x_N}\Gamma_1\left(x_N+y_N+\tau,\frac{t}{\epsilon}\right)
 	\le C\left(\frac{t}{\epsilon}\right)^{-1}\exp\left(-\frac{\epsilon|x_N+y_N+\tau|^2}{8t}\right)
	\\
 	& \le C\exp\left(-\frac{\epsilon x_N^2}{8t}\right)
	\left(\frac{t}{\epsilon}\right)^{-\frac{1}{2}}
	\exp\left(-\frac{\epsilon \tau^2}{8t}\right)
 	\left(\frac{4\pi t}{\epsilon}\right)^{-\frac{1}{2}}\exp\left(-\frac{\epsilon y_N^2}{8t}\right).
	\end{aligned}
\]
Combining this with property~($\Gamma$1) and \eqref{eq:5.1}, we see that
\[
	\begin{aligned}
 	& \int_\Omega |\hat H(x,y,t)|\,\dee y
	\\
 	&  \le C\exp\left(-\frac{\epsilon x_N^2}{8t}\right)
 	\int_0^\infty\int_0^\infty\left(\frac{t}{\epsilon}\right)^{-\frac{1}{2}}\exp\left(-\frac{\epsilon \tau^2}{8t}\right)
	\left(\frac{4\pi t}{\epsilon}\right)^{-\frac{1}{2}}
	\exp\left(-\frac{\epsilon y_N^2}{8t}\right)\,\dee y_N\,\dee\tau
	\\
 	&  \le C\exp\left(-\frac{\epsilon x_N^2}{8t}\right)
	\le C\exp\left(-\frac{\epsilon x_N^2}{8T}\right)
	\to 0\quad\mbox{as}\quad\epsilon\to\infty
	\end{aligned}
\]
uniformly for $(x,t)\in K\times(0,T)$.
This, together with \eqref{eq:2.5}, \eqref{eq:5.1}, and \eqref{eq:5.3}, implies the desired conclusion. 
Thus, Theorem~\ref{Theorem:5.1} follows. 
$\Box$
\begin{remark}
\label{Remark:5.1}
{\rm (1)}
The convergence rate in \eqref{eq:5.2} is optimal. 
To see this, let $\epsilon>0$ to be fixed and 
let $J_1$, $J_2$, and $J_3$ be as in the proof of Theorem~{\rm\ref{Theorem:5.1}}.
Then
\[
	\begin{aligned}
  	J_1(x,y,t) 
   	&=\frac{2}{\delta}
   	\int_0^t\int_0^\tau \partial_t\Gamma_{N-1}
	\left(x'-y',\frac{t-\xi}{\epsilon}+\frac{k}{\delta}\tau\right)
	\partial_{x_N}\Gamma_1\left(x_N+y_N+\frac{\tau}{\delta},\frac{t-\tau}{\epsilon}\right)\,\dee\xi\,\dee\tau
	\\
  	&=2\delta\int_0^{t/\delta}\int_0^\tau \partial_t
	\Gamma_{N-1}\left(x'-y',\frac{t-\delta\xi}{\epsilon}+k\tau\right)
	\partial_{x_N}\Gamma_1\left(x_N+y_N+\tau,\frac{t-\delta\tau}{\epsilon}\right)\,\dee\xi\,\dee\tau,
	\\
  	J_2(x,y,t) 
  	& =\frac{2}{\delta}\int_0^t\int_0^\tau\Gamma_{N-1}\left(x'-y',\frac{t}{\epsilon}+\frac{k}{\delta}\tau\right)
  	\partial_{x_N}\partial_t\Gamma_1\left(x_N+y_N+\frac{\tau}{\delta},\frac{t-\xi}{\epsilon}\right)\,\dee\xi\,\dee\tau
	\\
  	&=2\delta\int_0^{t/\delta}\int_0^\tau \Gamma_{N-1}\left(x'-y',\frac{t}{\epsilon}+k\tau\right)
  	\partial_{x_N}\partial_t\Gamma_1\left(x_N+y_N+\tau,\frac{t-\delta\xi}{\epsilon}\right)\,\dee\xi\,\dee\tau,
	\end{aligned}
\]
for $(x,y,t)\in D$.
It follows from the Lebesgue dominated convergence theorem that
\begin{equation}
\label{eq:5.11}
	\begin{aligned}
    	&\lim_{\delta\to 0^+}\frac{1}{\delta}\left(J_1(x,y,t)+J_2(x,y,t)\right)
	\\
    	& =2\int_0^\infty\int_0^\tau \partial_t\Gamma_{N-1}\left(x'-y',\frac{t}{\epsilon}+k\tau\right)
	\partial_{x_N}\Gamma_1\left(x_N+y_N+\tau,\frac{t}{\epsilon}\right)\,\dee\xi\,\dee\tau
	\\
    	& \quad
     	+2\int_0^\infty\int_0^\tau \Gamma_{N-1}\left(x'-y',\frac{t}{\epsilon}+k\tau\right)
	\partial_{x_N}\partial_t\Gamma_1\left(x_N+y_N+\tau,\frac{t}{\epsilon}\right)\,\dee\xi\,\dee\tau
	\\
    	&=2\partial_t K(x,y,t),
	\end{aligned}
\end{equation}
for $(x,y,t)\in D$, where
\[
	K(x,y,t):=2\int_0^\infty\tau \Gamma_{N-1}\left(x'-y',\frac{t}{\epsilon}+k\tau\right)
	\partial_{x_N}\Gamma_1\left(x_N+y_N+\tau,\frac{t}{\epsilon}\right)\,\dee\tau.
\]
Since
\[
	{0<-K(0,0,t)
	\le} -Ct^{-\frac{N-1}{2}}
	\int_0^\infty \zeta\partial_\zeta\Gamma_1\left(\zeta,\frac{t}{\epsilon}\right)\,\dee\zeta
	=Ct^{-\frac{N-1}{2}}\int_0^\infty \Gamma_1\left(\zeta,\frac{t}{\epsilon}\right)\,\dee\zeta
	\le Ct^{-\frac{N-1}{2}},
\]
we find $t_*>0$ and $R>0$ such that
\begin{equation}
\label{eq:5.12}
	(\partial_t K)(x,y,t_*)>0\quad \textrm{if}\quad x, y\in B^+_R(0).
\end{equation}
Consider the case where $\phi:=\chi_{B^+_R(0)}$ in $\Omega$ and $\psi=0$ on $\partial\Omega$. 
Then, by \eqref{eq:5.9}, \eqref{eq:5.11}, and \eqref{eq:5.12}, 
we obtain 
\[
	\lim_{\delta\to 0^+}\frac{1}{\delta}\left(\uHDD{\epsilon}{\delta}{k}(x,t_*)
	-\uHDN{\epsilon}{k}(x,t_*)\right)=2\int_{B^+_R(0)}\partial_tK(x,y,t_*)\,\dee y>0
\]
for $x\in B^+_R(0)$. 
This shows that the convergence rate in \eqref{eq:5.2} is optimal.
\medskip

\noindent
{\rm (2)}
The convergence rate in \eqref{eq:5.4} is optimal. 
Indeed, consider the case where $\phi=\chi_{B^+_1(0)}$ in $\Omega$ and $\psi=0$ on $\partial\Omega$. 
Since 
\[
	-(\partial_t\Gamma_{N-1})(y',t)\ge Ct^{-\frac{N+1}{2}},\quad y'\in B'_1(0),
\]
for all sufficiently large $t>0$ {\rm({\it see} \eqref{eq:3.39})}, 
it follows from~\eqref{eq:5.10} that
\[
	\begin{aligned}
 	& \frac{1}{k}\left(\uHhN{\epsilon}(e_N,t)-\uHDN{\epsilon}{k}(e_N,t)\right)
	\\
 	& =\frac{1}{k}\int_\Omega \left(\GHDN{\epsilon}{0}(e_N,y,t)
	-\GHDN{\epsilon}{k}(e_N,y,t)\right)\phi(y)\,\dee y
	\\
 	& =2\int_{B^+_1(0)} \int_0^\infty
  	\left(\int_0^\tau(\partial_t\Gamma_{N-1})\left(y',\frac{t}{\epsilon}+k\xi\right)\,\dee\xi\right)
	(\partial_{x_N}\Gamma_1)\left(x_N+y_N+\tau,\frac{t}{\epsilon}\right)\,\dee\tau\,\dee y\,\biggr|_{x_N=1}
	\\
 	& \ge Ct^{-\frac{N+1}{2}}\int_{B^+_1(0)}
	\int_0^\infty-\tau(\partial_{\tau}\Gamma_1)\left(1+y_N+\tau,\frac{t}{\epsilon}\right)\,\dee\tau\,\dee y>0
	\end{aligned}
\]
for all sufficiently large $t>0$. 
This shows that the convergence rate in \eqref{eq:5.4} is optimal.
\end{remark}
Next, we study the behavior of solutions to problem~\eqref{eq:HDN}
in the limits $k\to\infty$, $\epsilon\to\infty$, and $\epsilon\to0^+$. 
For any $p\in[1,\infty)$, 
define a function $f_p$ on $(1,\infty)$ by 
\begin{equation}
\label{eq:5.13}
f_p(r):=\begin{cases}
   	r^{-1} & \mbox{if}\quad p<p_N, 
	\vspace{3pt}
	\\
   	r^{-1}\log r & \mbox{if}\quad p=p_N, 
	\vspace{3pt}
	\\
   	r^{-\frac{N-1}{2p}} & \mbox{if}\quad p>p_N,
 	\end{cases}
\end{equation}
for $r\in(1,\infty)$, where $p_N:=(N-1)/2$. 
\begin{theorem}
\label{Theorem:5.2}
Let $\phi\in L^p(\Omega)$, where $p\in[1,\infty)$.
Then, for any $T>0$,
\begin{equation}
\label{eq:5.14}
	\begin{aligned}
	&
	\sup_{(x,t)\in\overline{\Omega}\times(T,\infty)}|\uHDN{\epsilon}{k}(x,t)|
 	=O\!\left(\epsilon^{\frac{N}{2p}}\right)
 	\quad\mbox{as $\epsilon\to0^+$ for any fixed $k>0$},
	\\
	&
	\sup_{(x,t)\in\overline{\Omega}\times(T,\infty)}|\uHhN{\epsilon}(x,t)|
 	=O\!\left(\epsilon^{\frac{N}{2p}}\right)
 	\quad\mbox{as $\epsilon\to0^+$}.
	\end{aligned}
\end{equation}
Furthermore, for any $R>0$,
\begin{equation}
\label{eq:5.15}
	\sup_{(x,t)\in {Q(R)}}
	\left|\uHDN{\epsilon}{k}(x,t)-\uHDi{\epsilon}{0}(x,t)\right|
 	=O(f_p(k))\quad\mbox{as $k\to\infty$ for any fixed $\epsilon>0$}.
\end{equation}
\end{theorem}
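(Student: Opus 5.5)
The plan is to work directly from the representation $V^{\epsilon,k}=G_0(\cdot,\cdot,t/\epsilon)+\hat H$ in \eqref{eq:5.1}, with two facts about $\hat H$. First, $\hat H\ge0$, since $-\partial_{x_N}\Gamma_1\ge0$ for positive first argument. Second, $\int_\Omega\hat H(x,y,t)\,\dee y\le 1$, which follows from \eqref{eq:5.5} because $G_0\ge0$. Both estimates then reduce to bounding $\int_\Omega \hat H(x,y,t)\phi(y)\,\dee y$; the Dirichlet part is already controlled by \eqref{eq:2.5}.

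\emph{Proof of \eqref{eq:5.14}.} The $G_0$ part is bounded by the third line of \eqref{eq:2.5}, which gives $C\epsilon^{\frac{N}{2p}}T^{-\frac{N}{2p}}\|\phi\|_{L^p(\Omega)}$ for $t>T$. For $\hat H$ I will derive a pointwise bound. Since $t/\epsilon+k\tau\ge t/\epsilon$, we have $\Gamma_{N-1}(x'-y',t/\epsilon+k\tau)\le C(t/\epsilon)^{-\frac{N-1}{2}}$. Integrating in $\tau$ exactly then gives
\[
\hat H(x,y,t)\le C(t/\epsilon)^{-\frac{N-1}{2}}\,2\Gamma_1(x_N+y_N,t/\epsilon)\le C(t/\epsilon)^{-\frac N2}.
\]
Combining this with $\int_\Omega\hat H\,\dee y\le1$ and H\"older's inequality, in the form
\[
\int_\Omega\hat H|\phi|\,\dee y\le(\sup_y\hat H)^{1/p}\Big(\int_\Omega\hat H\,\dee y\Big)^{1-1/p}\|\phi\|_{L^p(\Omega)},
\]
yields the bound $C(t/\epsilon)^{-\frac{N}{2p}}\|\phi\|_{L^p(\Omega)}$. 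This proves the first line of \eqref{eq:5.14}. The second line is the case $k=0$, using Lemma~\ref{Lemma:5.1}\,(4).

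\emph{Proof of \eqref{eq:5.15}.} By \eqref{eq:2.4}, $u^{\epsilon,k}_{\rm HDN}-u^{\epsilon}_{{\rm H}_{{\rm D},0}}=\int_\Omega\hat H\phi\,\dee y$, so it suffices to show that this quantity is $O(f_p(k))$ uniformly on $Q(R)$. Set $a:=\frac{N-1}{2p}$ and $s:=t/\epsilon$.
\begin{enumerate}[label={\rm(\roman*)}]
\item Apply H\"older's inequality in $y'$ using $\|\Gamma_{N-1}(\cdot,\sigma)\|_{L^{p'}}\le C\sigma^{-a}$ from \eqref{eq:2.2}. This gives
\[
\Big|\int_\Omega\hat H\phi\Big|\le C\int_0^\infty\!\!\int_0^\infty(s+k\tau)^{-a}\,g(x_N+y_N+\tau,s)\,\|\phi(\cdot,y_N)\|_{L^p(\mathbb R^{N-1})}\,\dee\tau\,\dee y_N,
\]
where $g(\eta,s):=-2\partial_\eta\Gamma_1(\eta,s)\le Cs^{-1}e^{-\eta^2/(8s)}$.
\item Apply H\"older's inequality in $y_N$. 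The $L^{p'}(\dee y_N)$ norm of $g(x_N+y_N+\tau,s)$ is at most $Cs^{-1+\frac{1}{2p'}}e^{-(x_N+\tau)^2/(16s)}$.
\item This reduces everything to the one-dimensional integral
\[
\int_0^\infty(s+k\tau)^{-a}\,s^{-1+\frac1{2p'}}\,e^{-(x_N+\tau)^2/(16s)}\,\dee\tau .
\]
For fixed $s$, this integral is $O(k^{-a})$ when $a<1$, since $\int_0^\infty(k\tau)^{-a}(\cdots)\,\dee\tau$ converges. When $a>1$ it is $O(k^{-1})$, since $\int_0^\infty(s+k\tau)^{-a}\,\dee\tau=Ck^{-1}s^{1-a}$. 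When $a=1$ it is $O(k^{-1}\log k)$, by splitting at $\tau\sim1$. These three cases are exactly $f_p(k)$ in \eqref{eq:5.13}, because $a>1\iff p<p_N$.
\end{enumerate}

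The main obstacle is making step (iii) uniform over $Q(R)=\{x_N+t>R\}$, where $s$ may degenerate. If $t\ge R/2$, then $s$ is bounded below, so all negative powers of $s$ are harmless. For large $s$, the extra factors $s^{-1+\frac1{2p'}}$ and $s^{1-a}$ (or, for $a<1$, the Gaussian integral of size $\sqrt s$) produce a nonpositive total power of $s$; I will check this exponent bookkeeping case by case. If instead $t<R/2$, then $x_N>R/2$. In that case I will extract the factor $e^{-x_N^2/(32s)}\le e^{-R^2/(128 s)}$, which absorbs every negative power of $s$ as $s\to0$, in the same way as in \eqref{eq:5.7}.
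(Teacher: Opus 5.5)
Your proposal is correct and follows essentially the paper's route. For \eqref{eq:5.15} you use H\"older in $y'$ and then in $y_N$ to reach the same one-dimensional integral $\int_0^\infty (t/\epsilon+k\tau)^{-\frac{N-1}{2p}}(t/\epsilon)^{-\frac{1}{2p}-\frac12}e^{-c\epsilon(x_N+\tau)^2/t}\,\dee\tau$ that the paper gets from Minkowski's inequality in \eqref{eq:5.16}, then run the same three cases ($p$ versus $p_N$) with the same \eqref{eq:5.7}-type split of $Q(R)$ into $t\ge R/2$ and $x_N>R/2$. The only small difference is \eqref{eq:5.14}: you interpolate the pointwise bound $\hat H\le C(t/\epsilon)^{-N/2}$ against $\int_\Omega\hat H\,\dee y\le 1$ rather than computing $\|\hat H(x,\cdot,t)\|_{L^{p/(p-1)}(\Omega)}$ directly as in \eqref{eq:5.16}, and both give the same bound $C(t/\epsilon)^{-\frac{N}{2p}}$.
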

{\bf Proof.}
It follows from \eqref{eq:2.2} and \eqref{eq:5.1} that
\begin{equation}
\label{eq:5.16}
	\begin{aligned}
	 & \|\HHDN(x,\cdot,t)\|_{L^{\frac{p}{p-1}}(\Omega)}\\
 	&
	\le 2\int_0^\infty
 	\left\|\Gamma_{N-1}\!\left(\cdot,\frac{t}{\epsilon}+k\tau\right)\right\|_{L^{\frac{p}{p-1}}(\mathbb{R}^{N-1})}
 	\left\|\partial_{x_N}\Gamma_1\!\left(x_N+\cdot+\tau,\frac{t}{\epsilon}\right)
 	\right\|_{{L^{\frac{p}{p-1}}(\mathbb R_+)}}\,\dee\tau
	\\
 	&
	\le C\int_0^\infty\left(\frac{t}{\epsilon}+k\tau\right)^{-\frac{N-1}{2p}}
 	\left(\frac{t}{\epsilon}\right)^{-\frac{1}{2p}-\frac{1}{2}}
 	\exp\!\left(-\frac{\epsilon(x_N+\tau)^2}{16t}\right)\,\dee\tau
	\\
	&
	\le C\left(\frac{t}{\epsilon}\right)^{-\frac{N}{2p}-\frac{1}{2}}
	\int_0^\infty\exp\!\left(-\frac{\epsilon\tau^2}{16t}\right)\,\dee\tau
	\le C\left(\frac{t}{\epsilon}\right)^{-\frac{N}{2p}}
	\end{aligned}
\end{equation}
for $(x,t)\in\overline{\Omega}\times(0,\infty)$, $k\ge0$, and $\epsilon>0$.
Then, by \eqref{eq:5.1}, \eqref{eq:5.3}, and \eqref{eq:5.16}, we have
\begin{equation}
\label{eq:5.17}
	 \left|\uHDN{\epsilon}{k}(x,t)
 	-\uHDi{\epsilon}{0}(x,t)\right|
	\le
 	\|\HHDN(x,\cdot,t)\|_{L^{\frac{p}{p-1}}(\Omega)}\,\|\phi\|_{L^p(\Omega)}
	\le C\,\epsilon^{\frac{N}{2p}}
\end{equation}
for $(x,t)\in\overline{\Omega}\times(T,\infty)$.
This, together with \eqref{eq:2.5}, implies \eqref{eq:5.14}.
Moreover, for any fixed $\epsilon>0$ and any $R>0$,
if $p\le p_N$, then
\[
	\begin{aligned}
 	&
	\int_0^\infty\left(\frac{t}{\epsilon}+k\tau\right)^{-\frac{N-1}{2p}}
 	\left(\frac{t}{\epsilon}\right)^{-\frac{1}{2p}-\frac{1}{2}}
 	\exp\!\left(-\frac{\epsilon(x_N+\tau)^2}{8t}\right)\,\dee\tau
	\\
 	&
	\le C\left(\frac{t}{\epsilon}\right)^{-\frac{1}{2p}-\frac{1}{2}}\exp\left(-\frac{\epsilon x_N^2}{8t}\right)\\
 	 & \qquad\times
	 \left\{\int_0^t\left(\frac{t}{\epsilon}+k\tau\right)^{-\frac{N-1}{2p}}\,\dee\tau
 	+k^{-\frac{N-1}{2p}}\int_t^\infty\tau^{-\frac{N-1}{2p}}
 	\exp\!\left(-\frac{\epsilon\tau^2}{8t}\right)\,\dee\tau
 	\right\}
	\\
 	&
	\le Cf_p(k),
	\end{aligned}
\]
Here we used the facts that $-(N-1)/(2p)< -1$ if $p<p_N$ and $-(N-1)/(2p)=-1$ if $p=p_N$. 
On the other hand, if $p>p_N$, then
\begin{equation}
\label{eq:5.18}
	\begin{aligned}
 	&
	\int_0^\infty\left(\frac{t}{\epsilon}+k\tau\right)^{-\frac{N-1}{2p}}
 	\left(\frac{t}{\epsilon}\right)^{-\frac{1}{2p}-\frac{1}{2}}
 	\exp\!\left(-\frac{\epsilon(x_N+\tau)^2}{8t}\right)\,\dee\tau
	\\
 	&
	\le C\left(\frac{t}{\epsilon}\right)^{-\frac{1}{2p}-\frac{1}{2}}\exp\left(-\frac{\epsilon x_N^2}{8t}\right)
 	\int_0^\infty\left(k\tau\right)^{-\frac{N-1}{2p}}
 	\exp\!\left(-\frac{\epsilon\tau^2}{8t}\right)\,\dee\tau
	\\
 	&
	\le Ck^{-\frac{N-1}{2p}}t^{-\frac{N+1}{4p}}\exp\left(-\frac{\epsilon x_N^2}{8t}\right)
	\int_0^\infty\eta^{-\frac{N-1}{2p}}\exp\!\left(-\eta^2\right)\,\dee\eta
	\le Cf_p(k),
	\end{aligned}
\end{equation}
for $(x,t)\in Q(R)$. 
Here we used the {fact} that $-(N-1)/(2p)>-1$ and argued as in \eqref{eq:5.7}.
These, together with \eqref{eq:5.16} and \eqref{eq:5.17}, imply \eqref{eq:5.15}.
The proof is complete.
$\Box$\medskip

Next, we study the diffusion limit of solutions to problems~\eqref{eq:HDD} and \eqref{eq:HD} 
in the limit $\delta\to\infty$.
\begin{theorem}
\label{Theorem:5.3}
Let $(\phi,\psi)\in L^\infty(\Omega)\times L^\infty(\partial\Omega)$. 
Let $\epsilon>0$ and $k>0$ be fixed.
For any $L>0$ and $T>0$,
\begin{equation}
\label{eq:5.19}
	\begin{aligned}
 	& \sup_{(x,t)\in \Omega_L^c\times(0,T)}|\uHDD{\epsilon}{\delta}{k}(x,t)-\uHDi{\epsilon}{\psi}(x,t)|
	=O\left(\delta^{-1}\right),
	\\
 	& \sup_{(x,t)\in \Omega_L^c\times(0,T)}|\uHD{\epsilon}{\delta}(x,t)-\uHDi{\epsilon}{\psi}(x,t)|
	=O\left(\delta^{-1}\right),
	\end{aligned}
\end{equation}
as $\delta\to\infty$. 
Furthermore, the following properties hold. 
\begin{enumerate}[label={\rm(\arabic*)}]
\item
 	For any $L>0$ and $T>0$, 
  	\begin{equation}
	\label{eq:5.20}
  		\sup_{(x,t)\in\Omega_L\times[T,\infty)}|\uHDi{\epsilon}{\psi}(x,t)-\uLDi{\psi}(x)|
		 =O\left(\epsilon^{\frac{1}{2}}\right)
  		\quad\mbox{as}\quad\epsilon\to 0^+,
  	\end{equation}
	where $\uLDi{\psi}$ is a solution to \eqref{eq:LiD}.
\item
	If $(\phi,\psi)\in BC(\Omega)\times L^\infty(\partial\Omega)$, then 
  	$$
  		\lim_{\epsilon\to\infty}\sup_{(x,t)\in K\times(0,T)}|\uHDi{\epsilon}{\psi}(x,t)-\phi(x)|=0
  	$$ 
  	for any compact set $K\subset\Omega$ and $T>0$. 
\end{enumerate}
\end{theorem}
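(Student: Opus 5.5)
The plan is to compare the representation \eqref{eq:1.12} of $\uHDD{\epsilon}{\delta}{k}$ (resp.\ $\uHD{\epsilon}{\delta}$) term by term with the representation \eqref{eq:2.3} of $\uHDi{\epsilon}{\psi}$. Here the Dirichlet datum is $F(y,\tau)=\psi(y)$, constant in time; heuristically, as $\delta\to\infty$ the boundary equation $\partial_t u=\delta^{-1}(k\Delta'u+\partial_{x_N}u)$ freezes the trace at $\psi$. The $G_0$-terms coincide, so there are two things to estimate.

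\emph{The $\phi$-contribution.} By \eqref{eq:3.7},
\[
\Bigl|\frac1\delta\int_\Omega H(x,y,t)\phi(y)\,\dee y\Bigr|\le C\epsilon^{\frac12}T^{\frac12}\delta^{-1}\|\phi\|_{L^\infty(\Omega)}
\]
for $t<T$, which is already $O(\delta^{-1})$.

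\emph{The $\psi$-contribution.} After the substitution $\tau\mapsto t-\tau$ in \eqref{eq:2.3}, the Dirichlet kernel on $\partial\Omega$ is $-2\int_0^t\Gamma_{N-1}(x'-y',\frac{t-\tau}{\epsilon})\partial_{x_N}\Gamma_1(x_N,\frac{t-\tau}{\epsilon})\,\dee\tau$. This is exactly $H(x,y,t)$ with $y_N=0$ and with the two $\delta$-dependent shifts $k\tau/\delta$ and $\tau/\delta$ deleted. I will split the difference as $J_a+J_b$.

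For $J_a$, I replace the $\Gamma_{N-1}$ time argument and write the difference as $\int_0^{k\tau/\delta}\partial_\xi\Gamma_{N-1}\,\dee\xi$, exactly as in \eqref{eq:3.36}. Property~($\Gamma$2) then gives the bound
\[
\frac{Ck t}{\epsilon\delta}\int_0^t\Bigl(\frac{t-\tau}{\epsilon}\Bigr)^{-2}\exp\Bigl(-\frac{\epsilon x_N^2}{8(t-\tau)}\Bigr)\,\dee\tau .
\]
Since $x_N>L$, this is bounded by $CkT\delta^{-1}$, as in \eqref{eq:3.37}. For $J_b$, I replace $x_N+\tau/\delta$ by $x_N$ and write $\partial_{x_N}\Gamma_1(x_N+\tau/\delta,s)-\partial_{x_N}\Gamma_1(x_N,s)=\int_0^{\tau/\delta}\partial^2\Gamma_1(x_N+\eta,s)\,\dee\eta$. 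After integrating $\Gamma_{N-1}$ in $y'$ via ($\Gamma$1), this is bounded by $C\frac{T}{\delta}\int_0^\infty s^{-3/2}e^{-L^2/(8s)}\,\dee s$, with $s=(t-\tau)/\epsilon$ and a factor $\epsilon^{-1}$ absorbed. When $k=0$, $J_a\equiv0$. Together these give \eqref{eq:5.19}.

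The main point to watch is uniformity as $t\to0^+$ and for all $x_N>L$. The only danger is the singularity at $\tau\to t$, and it is neutralized by the Gaussian factor $\exp(-\epsilon x_N^2/(8(t-\tau)))$ with $x_N\ge L$; this is exactly why the sup is over $\Omega_L^c$.

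For (1), the $\phi$-part is $O(\epsilon^{1/2})$ on $\Omega_L\times[T,\infty)$ by \eqref{eq:2.6}. For the boundary part, I change variables $s=(t-\tau)/\epsilon$ to get $-2\int_0^{t/\epsilon}\int_{\partial\Omega}\Gamma_{N-1}(x'-y',s)\partial_{x_N}\Gamma_1(x_N,s)\psi\,\dee\sigma\,\dee s$. The integral over $s\in(0,\infty)$ equals $\uLDi{\psi}(x)$ by the computation in Lemma~\ref{Lemma:4.1}\,(4) with $t=0$. The tail over $s>t/\epsilon$ is bounded as in \eqref{eq:4.10} by $Cx_N(t/\epsilon)^{-1/2}\le CLT^{-1/2}\epsilon^{1/2}$.

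For (2), the $G_0$-part converges to $\phi$ uniformly on $K$ by \eqref{eq:2.5}. The boundary part is bounded by $C\epsilon^{1/2}x_N t^{-1/2}\exp(-\epsilon x_N^2/(4t))\le C\exp(-\epsilon x_N^2/(8T))$ (a computation as in the proof of Theorem~\ref{Theorem:4.4} with $\delta=\infty$), which tends to $0$ uniformly on $K\times(0,T)$ since $x_N$ is bounded below on $K$.
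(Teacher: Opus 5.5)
Your proposal is correct and follows the paper's proof essentially step by step. You use the same splitting of $H(x,y,t)$ for $y\in\partial\Omega$ into a $\Gamma_{N-1}$-time-shift part, an $x_N+\tau/\delta$-shift part (the paper's $J_1$, $J_2$) and the Dirichlet kernel, with \eqref{eq:3.7} for the $\phi$-term. For (1), you likewise identify the full $s$-integral with $\uLDi{\psi}$ via Lemma~\ref{Lemma:4.1}\,(4), then add a tail estimate and \eqref{eq:2.6}; for (2), you use \eqref{eq:2.5} plus a decay bound on the boundary term.

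The one soft spot is your justification in (2). The computation in the proof of Theorem~\ref{Theorem:4.4} does not literally survive $\delta=\infty$, because its $\tau\in(t/2,t)$ piece relies on the factor $\exp(-\epsilon\tau^2/(4\delta^2(t-\tau)))$. However, the boundary term is just $\|\psi\|_{L^\infty(\partial\Omega)}$ times $-2\int_0^{t/\epsilon}\partial_{x_N}\Gamma_1(x_N,s)\,\dee s$. The paper bounds this directly by $CT/\epsilon$ for $x_N\ge L$, and it is in fact at most $\exp(-\epsilon x_N^2/(4t))$, so your conclusion stands.
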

{\bf Proof.}
Let $\epsilon>0$ and $k\ge 0$ be fixed.
Let $(\phi,\psi)\in L^\infty(\Omega)\times L^\infty(\partial\Omega)$. 
It follows from \eqref{eq:1.8} that
\[
	H(x,y,t)=J_1(x,y,t)+J_2(x,y,t)
	-2\int_0^t\Gamma_{N-1}\left(x'-y',\frac{t-\tau}{\epsilon}\right)
	\partial_{x_N}\Gamma_1\left(x_N,\frac{t-\tau}{\epsilon}\right)\,\dee\tau
\]
for $(x,y,t)\in\overline{\Omega}\times\partial\Omega\times(0,\infty)$, 
where
\[
	\begin{aligned}
	J_1(x,y,t)
 	& :=-2\int_0^t \left\{\Gamma_{N-1}\left(x'-y',\frac{t-\tau}{\epsilon}+\frac{k}{\delta}\tau\right)
	-\Gamma_{N-1}\left(x'-y',\frac{t-\tau}{\epsilon}\right)\right\}
	\\
 	& \qquad\quad
 	\times\partial_{x_N}\Gamma_1\left(x_N+\frac{\tau}{\delta},\frac{t-\tau}{\epsilon}\right)\,\dee\tau
	\\
 	& =-2\int_0^t\int^{k\tau/\delta}_0 \partial_\xi\Gamma_{N-1}\left(x'-y',\frac{t-\tau}{\epsilon}+\xi\right)
	\partial_{x_N}\Gamma_1\left(x_N+\frac{\tau}{\delta},\frac{t-\tau}{\epsilon}\right)\,\dee\xi\,\dee\tau,
	\\
	J_2(x,y,t)
 	& :=-2\int_0^t \Gamma_{N-1}\left(x'-y',\frac{t-\tau}{\epsilon}\right)
	\\
 	& \qquad\quad
 	\times\left\{\partial_{x_N}\Gamma_1\left(x_N+\frac{\tau}{\delta},\frac{t-\tau}{\epsilon}\right)
	-\partial_{x_N}\Gamma_1\left(x_N,\frac{t-\tau}{\epsilon}\right)\right\}\,\dee\tau
	\\
 	& =-2\int_0^t\int^{\tau/\delta}_0 \Gamma_{N-1}\left(x'-y',\frac{t-\tau}{\epsilon}\right)
	\partial_\xi\partial_{x_N}\Gamma_1\left(x_N+\xi,\frac{t-\tau}{\epsilon}\right)\,\dee\xi\,\dee\tau.
	\end{aligned}
\]
Let $L>0$ and $T>0$.
Then, by property~($\Gamma$2), we have
\[
	\begin{aligned}
 	& \int_{\partial\Omega}|J_1(x,y,t)|\,\dee\sigma(y)
	\\
 	& \le C\int_0^t\int^{kt/\delta}_0\left(\frac{t-\tau}{\epsilon}+\xi\right)^{-1}
	\left|\partial_{x_N}\Gamma_1\left(x_N+\frac{\tau}{\delta},\frac{t-\tau}{\epsilon}\right)\right|\,\dee\xi\,\dee\tau
	\\
 	& \le C\frac{kt}{\delta}\int_0^t\left(\frac{t-\tau}{\epsilon}\right)^{-2}
	\exp\left(-\frac{\epsilon x_N^2}{8(t-\tau)}\right)\,\dee\tau
 	\le C\delta^{-1},
	\\
 	& \int_{\partial\Omega}|J_2(x,y,t)|\,\dee\sigma(y)
	\le \frac{Ct}{\delta}\int_0^t\left(\frac{t-\tau}{\epsilon}\right)^{-\frac{3}{2}}
	\exp\left(-\frac{\epsilon x_N^2}{8(t-\tau)}\right)\,\dee\tau\le C\delta^{-1}{,}
	\end{aligned}
\]
for $(x,t)\in \Omega_L^c\times(0,T)$ and $\delta>0$.
These estimates imply that
\[
	\begin{aligned}
 	& \int_{\partial\Omega}H(x,y,t)\psi(y)\,\dee\sigma(y)
	\\
 	& =-2\int_0^t\int_{\partial\Omega}\Gamma_{N-1}\left(x'-y',\frac{t-\tau}{\epsilon}\right)
	\partial_{x_N}\Gamma_1\left(x_N,\frac{t-\tau}{\epsilon}\right)\psi(y)\,\dee\sigma(y)\,\dee\tau
	+O\left(\delta^{-1}\right)
	\end{aligned}
\]
as $\delta\to\infty$ uniformly for $(x,t)\in \Omega_L^c\times (0,T)$. 
On the other hand, by \eqref{eq:3.7}, we have
\[
	\frac{1}{\delta}\int_\Omega H(x,y,t)\phi(y)\,\dee y=O(\delta^{-1})\quad\mbox{as}\quad \delta\to\infty, 
\]
uniformly for $(x,t)\in \Omega_L^c\times (0,T)$.
Combining these with \eqref{eq:1.8}, \eqref{eq:1.9}, and \eqref{eq:2.3},
we obtain \eqref{eq:5.19}.

Next, we prove assertion~(1). 
It follows from Lemma~\ref{Lemma:4.1}~(4), \eqref{eq:2.3}, and \eqref{eq:2.11} that 
\begin{equation}
\label{eq:5.21}
	\begin{aligned}
 	& \uHDi{\epsilon}{\psi}(x,t)-\uHDi{\epsilon}{0}(x,t)
	\\
 	& =-2\int_{\partial\Omega}\int_0^{t/\epsilon}\Gamma_{N-1}\left(x'-y',\tau\right)
	\partial_{x_N}\Gamma_1\left(x_N,\tau\right)\psi(y)\,\dee\tau\,\dee\sigma(y)\\
 	& =\int_{\partial\Omega}P(x'-y',x_N)\psi(y)\,\dee\sigma(y)
	+\int_{\partial\Omega}J_3(x,y,t)\psi(y)\,\dee\sigma(y)\\
	& =\uLDi{\psi}(x)+\int_{\partial\Omega}J_3(x,y,t)\psi(y)\,\dee\sigma(y)
 	\end{aligned}
\end{equation}
for {$(x,t)\in\Omega\times(0,\infty)$}, 
where
\[
	J_3(x,y,t):=2\int_{t/\epsilon}^\infty \Gamma_{N-1}\left(x'-y',\tau\right)
	\partial_{x_N}\Gamma_1\left(x_N,\tau\right)\,\dee\tau.
\]
Let $L>0$ and $T>0$. 
Since $\psi\in L^\infty(\partial\Omega)$, by property~($\Gamma$1) we have
\[
	\int_{\partial\Omega}|J_3(x,t)|\,\dee\sigma(y)
	\le C\int_{t/\epsilon}^\infty \tau^{-\frac{3}{2}}x_N\exp\left(-\frac{x_N^2}{4\tau}\right)\,\dee\tau
	\le C\int_{t/\epsilon}^\infty \tau^{-\frac{3}{2}}x_N\,\dee\tau
	\le C\left(\frac{\epsilon}{T}\right)^{\frac{1}{2}}L
	\le C\epsilon^{\frac{1}{2}}
\]
for $x\in\Omega_L$ and $t\in(T,\infty)$.
This, together with \eqref{eq:2.6} and \eqref{eq:5.21}, implies assertion~(1).

In remains to prove assertion~(2). 
By property~($\Gamma$1), \eqref{eq:2.1}, and \eqref{eq:2.3}, we have
\[
	\begin{aligned}
	\left|\uHDi{\epsilon}{\psi}(x,t)-\uHDi{\epsilon}{0}(x,t)\right|
 	& \le \frac{C\|\psi\|_{L^\infty(\partial\Omega)}}{\epsilon}\int_0^t
	\left(\frac{t-\tau}{\epsilon}\right)^{-1}\exp\left(-\frac{\epsilon x_N^2}{8(t-\tau)}\right)\,\dee\tau
	\\
 	& =C\|\psi\|_{L^\infty(\partial\Omega)}\int_0^{t/\epsilon}\tau^{-1}\exp\left(-\frac{L^2}{8\tau}\right)\,\dee\tau
	\le C\frac{T}{\epsilon}
	\le C\epsilon^{-1}
	\end{aligned}
\]
for $x\in\Omega_L^c$, $t\in(0,T)$, and $\epsilon\in[1,\infty)$.
This, together with \eqref{eq:2.5}, implies assertion~(2).
Thus, Theorem~\ref{Theorem:5.3} follows.
$\Box$
\begin{remark}
\label{Remark:5.2}
{\rm (1)}
The convergence rate in \eqref{eq:5.19} is optimal. 
Indeed, consider the case where $\phi=1$ on $\Omega$ and $\psi=0$ on $\partial\Omega$. 
Then, if $k>0$, by \eqref{eq:1.12} and \eqref{eq:3.7}, we have
\[
	\begin{aligned}
 	& \delta\left(\uHDD{\epsilon}{\delta}{k}(x,t)-\uHDi{\epsilon}{\psi}(x,t)\right)
 	=\delta\left(\uHDD{\epsilon}{\delta}{k}(x,t)-\uHDi{\epsilon}{0}(x,t)\right)
	\\
 	& =\int_\Omega H(x,y,t)\,\dee y=2\int_0^t \left(\frac{4\pi(t-\tau)}{\epsilon}\right)^{-\frac{1}{2}}
	\exp\left(-\frac{\epsilon(x_N+\tau/\delta)^2}{4(t-\tau)}\right)\,\dee\tau\\
	&\to
	2\int_0^t \left(\frac{4\pi(t-\tau)}{\epsilon}\right)^{-\frac{1}{2}}
	\exp\left(-\frac{\epsilon x_N^2}{4(t-\tau)}\right)\,\dee\tau>0
	\quad\mbox{as}\quad \delta\to\infty
	\end{aligned}
\]
for $(x,t)\in\Omega\times(0,\infty)$. 
This shows that the convergence rate in \eqref{eq:5.19} is optimal when $k>0$. 
The case $k=0$ can be treated in a similar way.
\medskip

\noindent
{\rm (2)} 
The convergence rate in \eqref{eq:5.20} is optimal. 
Indeed, consider the case where $\phi=0$ on $\Omega$ and $\psi=1$ on $\partial\Omega$. 
Then $\uHDi{\epsilon}{0}\equiv 0$ in $\Omega\times(0,\infty)$, 
which, together with property~{\rm ($\Gamma$1)} and \eqref{eq:5.21}, implies that
\[
	\begin{aligned}
	|\uHDi{\epsilon}{\psi}(x,t)-\uLDi{\psi}(x,t)|
 	& =2\left|\int_{\partial\Omega}\left(\int_{t/\epsilon}^\infty 
	\Gamma_{N-1}(x'-y',\tau)\partial_{x_N}\Gamma_1(x_N,\tau)\,\dee\tau\right)\,\dee\sigma(y)\right|
	\\
 	& =2\left|\int_{t/\epsilon}^\infty \partial_{x_N}\Gamma_1(x_N,\tau)\,\dee\tau\right|
 	=\int_{t/\epsilon}^\infty (4\pi\tau)^{-\frac{1}{2}}\frac{x_N}{\tau}
	\exp\left(-\frac{x_N^2}{4\tau}\right)\,\dee\tau
	\\
 	& \ge C\frac{x_N}{t^{\frac{1}{2}}}\exp\left(-\frac{x_N^2}{4t}\right)\epsilon^{\frac{1}{2}}
	\end{aligned}
\]
for $(x,t)\in\Omega\times(0,\infty)$ and $\epsilon\in(0,1)$. 
This shows that the convergence rate in \eqref{eq:5.20} is optimal.
\end{remark}

By now, we have studied the diffusion limits of solutions to
problems~\eqref{eq:HDD} and \eqref{eq:HD} with respect to $\epsilon$ and $\delta$.
In the remainder of this paper, we discuss the limit $k\to\infty$, as well as
the simultaneous limit $k\to\infty$ and $\delta\to0^+$ under the condition that
$k/\delta>0$ is fixed.
The latter corresponds to the diffusion limit as $\nu\to0^+$ for the heat
equation with the diffusive dynamical boundary condition
\[
	\delta\partial_t u - k\Delta' u - \nu\partial_{x_N}u = 0
	\quad\mbox{on}\,\,\,\partial\Omega\times(0,\infty).
\]
\begin{theorem}
\label{Theorem:5.4}
Let $(\phi,\psi)\in L^p(\Omega)\times L^p(\partial\Omega)$, where $p\in[1,\infty)$.
Let $\epsilon>0$ and $\delta>0$ be fixed.
Then, for any $R>0$,
\begin{equation}
\label{eq:5.22}
	\sup_{(x,t)\in {Q(R)}}
	\left|\uHDD{\epsilon}{\delta}{k}(x,t)-\uHDi{\epsilon}{0}(x,t)\right|
	=O(f_p(k))
\end{equation}
as $k\to\infty$, where the function $f_p$ is as in \eqref{eq:5.13}.
\end{theorem}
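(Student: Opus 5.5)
By \eqref{eq:1.12} and \eqref{eq:2.4}, the difference to be estimated is
\[
	\uHDD{\epsilon}{\delta}{k}(x,t)-\uHDi{\epsilon}{0}(x,t)
	=\frac{1}{\delta}\int_\Omega H(x,y,t)\phi(y)\,\dee y
	+\frac{1}{\epsilon}\int_{\partial\Omega}H(x,y,t)\psi(y)\,\dee\sigma(y).
\]
So it suffices to show that, uniformly for $(x,t)\in Q(R)$, both $\|H(x,\cdot,t)\|_{L^{p/(p-1)}(\Omega)}$ and $\|H(x,\cdot,t)|_{\partial\Omega}\|_{L^{p/(p-1)}(\partial\Omega)}$ are $O(f_p(k))$. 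We then conclude with H\"older's inequality. The plan follows the proof of Theorem~\ref{Theorem:5.2}, in particular \eqref{eq:5.16}--\eqref{eq:5.18}. We apply Minkowski's inequality to the $\tau$-integral in \eqref{eq:1.8} and use \eqref{eq:2.2} to split the norm as a product. This gives
\[
	\|H(x,\cdot,t)\|_{L^{p'}(\Omega)}\le C\int_0^t\left(\frac{t-\tau}{\epsilon}+\frac{k}{\delta}\tau\right)^{-\frac{N-1}{2p}}
	\left(\frac{t-\tau}{\epsilon}\right)^{-\frac{1}{2p}-\frac12}\exp\left(-\frac{\epsilon(x_N+\tau/\delta)^2}{16(t-\tau)}\right)\dee\tau .
\]
The boundary norm is bounded by the same integral with $(\frac{t-\tau}{\epsilon})^{-1}$ in place of $(\frac{t-\tau}{\epsilon})^{-\frac{1}{2p}-\frac12}$, since on $\partial\Omega$ there is no $y_N$-integration. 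We then substitute $s=\tau/\delta$, so that the $\Gamma_{N-1}$ time becomes $\frac{t-\delta s}{\epsilon}+ks$ with $s\in(0,t/\delta)$. This puts the integrals in the same form as in Theorem~\ref{Theorem:5.2}, up to the harmless time shift $t\mapsto t-\delta s$.

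Next we split the $s$-integral into three pieces.
\begin{itemize}
\item[(a)] The piece $s\le 1/k$, where we only use $(\cdot)^{-\frac{N-1}{2p}}\le(\frac{t-\delta s}{\epsilon})^{-\frac{N-1}{2p}}$. Its length is $O(k^{-1})$, and the remaining factor is bounded, with the convention that the factor stays bounded because of the Gaussian.
\item[(b)] The piece $1/k\le s\le t/(2\delta)$, where $t-\delta s\ge t/2$. There we bound the first factor by $(ks)^{-\frac{N-1}{2p}}$, so the integral is controlled by $k^{-\frac{N-1}{2p}}\int_{1/k}^{S}s^{-\frac{N-1}{2p}}\,\dee s$ times a Gaussian factor. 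This yields $k^{-1}$, $k^{-1}\log k$, or $k^{-\frac{N-1}{2p}}$ according to whether $p<p_N$, $p=p_N$, or $p>p_N$, that is, $f_p(k)$. For $p<p_N$ the $s$-integral converges at infinity; to secure that, we use the Gaussian factor $\exp(-\epsilon s^2/(16t))$ together with $(x_N+s)^2\ge s^2$.
\item[(c)] The piece $t/2\le\delta s<t$, where $ks\ge kt/(2\delta)$. Here the first factor is $O(k^{-\frac{N-1}{2p}}t^{-\frac{N-1}{2p}})$, and the singular factor $(t-\delta s)^{-1}$ or $(t-\delta s)^{-\frac{1}{2p}-\frac12}$ is tamed by $\exp(-\epsilon(x_N+s)^2/(16(t-\delta s)))$, with $x_N+s\ge t/(2\delta)>0$. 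This is exactly the kind of estimate carried out for the $\int_{t/2}^t$ parts in the proof of Theorem~\ref{Theorem:5.1}.
\end{itemize}

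Uniformity on $Q(R)$ then follows as in \eqref{eq:5.7}. If $t$ is small, then $x_N\ge R/2$, and the Gaussian factor $\exp(-\epsilon x_N^2/(Ct))$ kills all negative powers of $t$. If $t\ge R/2$, the powers of $t$ are bounded below, and large $t$ only helps, since every factor decays. Case (a) must be handled in the same way to produce $k^{-1}\le f_p(k)$.

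The main obstacle is the boundary piece (the $\psi$ term) in regime (b)/(c). Its weight $(t-\tau)^{-1}$ is more singular than the interior weight, and when $x_N$ is small, control of $t-\delta s\to0$ must come entirely from the factor $s\ge t/(2\delta)$ inside the Gaussian. I would handle it by the trick already used in \eqref{eq:3.37}: write $(\frac{t-\tau}{\epsilon})^{-1}e^{-c a^2/(t-\tau)}\le C a^{-2}$ with $a=x_N+\tau/\delta\ge t/(2\delta)$, which gives a factor $Ct\,(t/\delta)^{-2}$ that is bounded on $\{t\ge R/2\}$. I would also check carefully that the log case $p=p_N$ produces exactly $k^{-1}\log k$ and not something worse.
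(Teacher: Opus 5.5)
Your proposal is correct and follows essentially the paper's proof: H\"older's inequality, Minkowski in $\tau$ together with \eqref{eq:2.2}, a split of the $\tau$-integral at $t/2$, the case analysis $p<p_N$, $p=p_N$, $p>p_N$ on the first half, the bound $(kt/(2\delta))^{-\frac{N-1}{2p}}$ with Gaussian taming via $x_N+\tau/\delta\ge t/(2\delta)$ on the second half, and uniformity on $Q(R)$ as in \eqref{eq:5.7}; your extra cut at $\tau/\delta=1/k$ is only a cosmetic variant of the paper's direct integration of $(t/(2\epsilon)+k\tau/\delta)^{-\frac{N-1}{2p}}$ over $(0,t/2)$. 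One small correction: the Gaussian factor $\exp(-\epsilon s^2/(16t))$ is needed in the case $p>p_N$, where $\int_{1/k}^{t/(2\delta)}s^{-\frac{N-1}{2p}}\,ds$ grows like a positive power of $t$, and not for $p<p_N$, where $\int_{1/k}^\infty s^{-\frac{N-1}{2p}}\,ds$ already converges.
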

{\bf Proof.}
Let $\epsilon>0$ and $\delta>0$ be fixed. 
Let $p\in[1,\infty)$ and $R>0$.
By \eqref{eq:1.12} and \eqref{eq:2.4}, and applying H\"older's inequality, we see that
\begin{equation}
\label{eq:5.23}
	\begin{aligned}
	& \sup_{(x,t)\in Q(R)}
	\left|\uHDD{\epsilon}{\delta}{k}(x,t)-\uHDi{\epsilon}{0}(x,t)\right|\\
	& \le \sup_{(x,t)\in Q(R)} \bigg\{\|H(x,\cdot,t)\|_{L^{\frac{p}{p-1}}(\Omega)}\|\phi\|_{L^p(\Omega)} 
	+\|H(x,\cdot,t)\|_{L^{\frac{p}{p-1}}(\partial\Omega)}\|\psi\|_{L^p(\partial\Omega)}\bigg\}.
	\end{aligned} 
\end{equation} 

On the other hand, similarly to \eqref{eq:5.16}, by \eqref{eq:1.8} and {\eqref{eq:2.2}, 
we have
\begin{equation}
\label{eq:5.24}
	\begin{aligned}
	& \|H(x,\cdot,t)\|_{L^{\frac{p}{p-1}}(\Omega)}
	\\
 	& \le 2\int_0^t
	\left\|\Gamma_{N-1}\!\left(x'-\cdot,\frac{t-\tau}{\epsilon}+\frac{k}{\delta}\tau\right)\right\|_{L^{\frac{p}{p-1}}(\mathbb R^{N-1})}
 	\left\|\partial_{x_N}\Gamma_1\!\left(x_N+\cdot+\frac{\tau}{\delta},
 	\frac{t-\tau}{\epsilon}\right)\right\|_{{L^{\frac{p}{p-1}}(\mathbb R_+)}}\,\dee\tau
	\\
	& \le \int_0^t I(t,\tau)\,\dee\tau,
	\end{aligned}
\end{equation}
for $(x,t)\in\Omega\times(0,\infty)$, 
where 	
\[
	I(t,\tau):=\left(\frac{t-\tau}{\epsilon}+\frac{k}{\delta}\tau\right)^{-\frac{N-1}{2p}}
 	\left(\frac{t-\tau}{\epsilon}\right)^{-\frac{1}{2p}-\frac{1}{2}}
 	\exp\!\left(-\frac{\epsilon(\delta x_N+\tau)^2}{16\delta^2(t-\tau)}\right).
\]
We claim that 
\begin{equation}
\label{eq:5.25}
	\int_0^{t/2}I(t,\tau)\,\dee\tau\le Cf(k)
\end{equation}
for $(x,t)\in Q(R)$ and $k\ge 2$.
In fact, if $p<p_N$, 
then
\[ 
	\begin{aligned}
	\int_0^{t/2}I(t,\tau)\,\dee\tau
 	& \le Ct^{-\frac{1}{2p}-\frac{1}{2}}\exp\left(-\frac{\epsilon x_N^2}{16t}\right)
	\int_0^{t/2}\left(\frac{t}{2\epsilon}+\frac{k}{\delta}\tau\right)^{-\frac{N-1}{2p}}\,\dee\tau
	\\
 	& \le Ct^{-\frac{1}{2p}-\frac{1}{2}}\exp\left(-\frac{\epsilon x_N^2}{16t}\right)\frac{\delta}{k}
	\left(\frac{t}{2\epsilon}\right)^{-\frac{N-1}{2p}+1}
	\end{aligned}
\]
for $(x,t)\in\Omega\times(0,\infty)$ and $k>0$. 
Then we argue as in \eqref{eq:5.7} to obtain
$$
\int_0^{t/2}I(t,\tau)\,\dee\tau \le Ck^{-1}
$$
for $(x,t)\in Q(R)$ and $k\ge 2$ when $p<p_N$. 
Similarly, if $p=p_N$, then
\[
	\int_0^{t/2}I(t,\tau)\,\dee\tau
	\le Ct^{-\frac{1}{2p}-\frac{1}{2}}\exp\left(-\frac{\epsilon x_N^2}{16t}\right)
	\frac{\delta}{k}\log\left(1+\frac{\epsilon k}{\delta}\right)
 	\le Ck^{-1}\log k
\]
for $(x,t)\in Q(R)$ and $k\ge 2$.
If $p>p_N$, then we argue as in \eqref{eq:5.18} to obtain
\[
	\begin{aligned}
	\int_0^{t/2}I(t,\tau)\,\dee\tau
	& \le Ct^{-\frac{1}{2p}-\frac{1}{2}}\exp\left(-\frac{\epsilon x_N^2}{16t}\right)
	\int_0^{t/2}\left(k\tau\right)^{-\frac{N-1}{2p}}
	\exp\left(-\frac{\epsilon\tau^2}{16\delta^2(t-\tau)}\right)\,\dee\tau
	\\
	& \le Ck^{-\frac{N-1}{2p}}t^{-\frac{1}{2p}-\frac{1}{2}}\exp\left(-\frac{\epsilon x_N^2}{16t}\right)
	\int_0^{t/2}\tau^{-\frac{N-1}{2p}}
	\exp\left(-\frac{\epsilon\tau^2}{16\delta^2t}\right)\,\dee\tau
	\le Ck^{-\frac{N-1}{2p}}
	\end{aligned}
\]
for $(x,t)\in Q(R)$ and $k\ge 2$. Thus \eqref{eq:5.25} holds. 

In contrast, 
we observe that
\[
	\begin{aligned}
 	& \left(\frac{t-\tau}{\epsilon}\right)^{-\frac{1}{2p}-\frac{1}{2}}
	\exp\left(-\frac{\epsilon(\delta x_N+\tau)^2}{32\delta^2(t-\tau)}\right)
	\\
 	& =(\delta x_N+\tau)^{-\frac{1}{p}-1}
	\left(\frac{t-\tau}{\epsilon(\delta x_N+\tau)^2}\right)^{-\frac{1}{2p}-\frac{1}{2}}
	\exp\left(-\frac{\epsilon(\delta x_N+\tau)^2}{32\delta^2(t-\tau)}\right)
 	\le C(\delta x_N+\tau)^{-\frac{1}{p}-1}
	\end{aligned}
\]
for $(x,\tau)\in\Omega\times(t/2,t)$ with $x_N+t\ge R$, 
which yields 
\begin{equation}
\label{eq:5.26}
	\begin{aligned}
	\int_{t/2}^tI(t,\tau)\,\dee\tau
 	& \le C k^{-\frac{N-1}{2p}}{t^{-\frac{N-1}{2p}}
	\exp\left(-\frac{\epsilon x_N^2}{32t}\right)}\int_{t/2}^t(\delta x_N+\tau)^{-\frac{1}{p}-1}\,\dee\tau\\
 	& \le C k^{-\frac{N-1}{2p}}{t^{-\frac{N-1}{2p}}\exp\left(-\frac{\epsilon x_N^2}{32t}\right)}t^{-\frac{1}{p}}
	\le C k^{-\frac{N-1}{2p}}
	\end{aligned}
\end{equation}
for $(x,t)\in Q(R)$ and $k\ge 2$. 
Here we argued as in \eqref{eq:5.7}.
Combining \eqref{eq:5.24}, \eqref{eq:5.25}, and \eqref{eq:5.26}, we obtain 
\begin{equation}
\label{eq:5.27}
	\|H(x,\cdot,t)\|_{L^{\frac{p}{p-1}}(\Omega)}\le Cf_p(k)
\end{equation}
for $(x,t)\in Q(R)$ and $k\ge 2$.
Similarly, 
\[
	\begin{aligned}
 	& \|H(x,\cdot,t)\|_{L^{\frac{p}{p-1}}(\partial\Omega)}
	\\
 	& \le
 	2\int_0^t\left\|\Gamma_{N-1}\!\left(x'-\cdot,\frac{t-\tau}{\epsilon}+\frac{k}{\delta}\tau\right)
 	\right\|_{L^{\frac{p}{p-1}}(\mathbb{R}^{N-1})}
 	\left\|\partial_{x_N}\Gamma_1\!\left(x_N+\cdot+\frac{\tau}{\delta},
 	\frac{t-\tau}{\epsilon}\right)\right\|_{{L^\infty(\mathbb R_+)}}\,\dee\tau
	\\
 	& \le
 	C\left(\int_0^{t/2}+\int_{t/2}^t\right)\left(\frac{t-\tau}{\epsilon}+\frac{k}{\delta}\tau\right)^{-\frac{N-1}{2p}}
 	\left(\frac{t-\tau}{\epsilon}\right)^{-1}
 	\exp\left(-\frac{\epsilon(\delta x_N+\tau)^2}{8\delta^2(t-\tau)}\right)\,\dee\tau
	\\
 	& \le
 	C\exp\left(-\frac{\epsilon x_N^2}{8t}\right)t^{-1}
	\int_0^{t/2}\left(\frac{t-\tau}{\epsilon}+\frac{k}{\delta}\tau\right)^{-\frac{N-1}{2p}}
	\exp\left(-\frac{\epsilon\tau^2}{8\delta^2(t-\tau)}\right)\,\dee\tau
	\\
	&\qquad
	+Ck^{-\frac{N-1}{2p}}{t^{-\frac{N-1}{2p}}\exp\left(-\frac{\epsilon x_N}{16t}\right)}
	\int_{t/2}^t(\delta x_N+\tau)^{-2}\,\dee\tau\le Cf_p(k)
	\end{aligned}
\]
for $(x,t)\in Q(R)$ and $k\ge 2$.
This, together with \eqref{eq:5.23} and \eqref{eq:5.27}, implies \eqref{eq:5.22}.
The proof is complete.
$\Box$
\begin{theorem}
\label{Theorem:5.5}
Let $\theta>0$ be fixed.
Let $\Psi$ be a function on $\partial\Omega\times(0,\infty)$ defined by \eqref{eq:2.7}. 
\begin{enumerate}[label={\rm(\arabic*)}]
\item
 	Let $(\phi,\psi)\in L^\infty(\Omega)\times L^\infty(\partial\Omega)$.
	Let $\epsilon>0$ be fixed.
  	For any $T>0$,  
  	\[
  		\sup_{(x,t)\in\Omega\times(0,T)}
		\left|\uHDD{\epsilon}{\delta}{k}(x,t)-\uHDP{\epsilon}{\theta}(x,t)\right|
		=O\left(k^{-1}\right)\quad\mbox{as}\quad k\to\infty\quad\mbox{with}\quad \frac{\delta}{k}=\theta.
  	\]
	Here $\uHDP{\epsilon}{\theta}$ is defined as in \eqref{eq:2.8}.
\item
	Let $(\phi,\psi)\in L^p(\Omega)\times L^p(\partial\Omega)$ for some $p\in[1,\infty)$. 
	Let $\epsilon>0$ be fixed. 
  	For any $R>0$,
  	\[
  		\sup_{(x,t)\in Q(R)}
		\left|\uHDP{\epsilon}{\theta}(x,t)-\uHDi{\epsilon}{0}(x,t)\right|
  		=O(f_p(\theta^{-1}))
		  \quad\mbox{as}\quad\theta\to 0^+,
  	\]
	where $f_p$ is the function defined as in \eqref{eq:5.13}.
\item
	Let $(\phi,\psi)\in L^\infty(\Omega)\times L^\infty(\partial\Omega)$. 
	Let $\epsilon>0$ be fixed.
  	For any $L>0$ and $T>0$,  
  	\[
  		\sup_{(x,t)\in\Omega_L^c\times(0,T)}\left|\uHDP{\epsilon}{\theta}(x,t)-\uHDi{\epsilon}{\psi}(x,t)\right|
	=O\left(\theta^{-1}\right)\quad\mbox{as}\quad\theta\to\infty.
  	\]
\item
 	Let $(\phi,\psi)\in L^\infty(\Omega)\times L^\infty(\partial\Omega)$ for some $p\in[1,\infty)$.
  	For any $L>0$ and $T>0$,
    	\[
    	\sup_{(x,t)\in\Omega_L\times(T,\infty)}\left|\uHDP{\epsilon}{\theta}(x,t)
    	-\uLDP{\theta}(x,t)\right|
	=O\left(\epsilon^\frac{1}{2}\right)\quad \textrm{as}\quad \epsilon\to 0^+.
    	\]
   	 Here
    	$\uLDP{\theta}$ is a function on $\Omega\times(0,\infty)$ defined by 
    	\begin{equation}
    	\label{eq:5.28}
    		\uLDP{\theta}(x,t)
		=\int_{\partial\Omega}P(x'-y',x_N)\Psi(y',t)\,\dee\sigma(y),\quad x=(x',x_N)\in\Omega.
    	\end{equation}
\item
  	Let $(\phi,\psi)\in BC(\Omega)\times L^\infty(\partial\Omega)$.
  	For any compact set $K\subset\Omega$ and $T>0$,
  	\[
  		\lim_{\epsilon\to\infty}\sup_{(x,t)\in K\times(0,T)}|\uHDP{\epsilon}{\theta}(x,t)-\phi(x)|=0.
  	\]
\end{enumerate}
\end{theorem}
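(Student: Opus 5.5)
Throughout, I would argue directly on the explicit kernels. The basic identity is that, with $\delta=k\theta$, formula \eqref{eq:1.8} gives
\[
H(x,y,t)=-2\int_0^t\Gamma_{N-1}\Bigl(x'-y',\tfrac{t-\tau}{\epsilon}+\tfrac{\tau}{\theta}\Bigr)\partial_{x_N}\Gamma_1\Bigl(x_N+y_N+\tfrac{\tau}{k\theta},\tfrac{t-\tau}{\epsilon}\Bigr)\dee\tau .
\]
This coincides with $\tilde H$ of \eqref{eq:2.9} except that the normal shift $y_N+\tau/(k\theta)$ appears in place of $y_N=0$.

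\emph{Assertion (1).} I split $\uHDD{\epsilon}{\delta}{k}-\uHDP{\epsilon}{\theta}$ using \eqref{eq:1.12} and \eqref{eq:2.8}. The $G_0$ terms cancel. The term $\frac1\delta\int_\Omega H\phi$ is bounded by $C\delta^{-1}t^{1/2}\le Ck^{-1}$ by \eqref{eq:3.7}. The boundary term $\frac1\epsilon\int_{\partial\Omega}(H-\tilde H)\psi$ is handled as in the proof of Theorem~\ref{Theorem:5.3}. I write $\partial_{x_N}\Gamma_1(x_N+\tau/(k\theta),s)-\partial_{x_N}\Gamma_1(x_N,s)=\int_0^{\tau/(k\theta)}\partial^2\Gamma_1\,\dee\xi$ and integrate out $y'$ by ($\Gamma$1). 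This gives the bound
\[
\frac{C}{\epsilon}\int_0^t\frac{\tau}{k\theta}\Bigl(\frac{t-\tau}{\epsilon}\Bigr)^{-3/2}\exp\Bigl(-\frac{\epsilon x_N^2}{8(t-\tau)}\Bigr)\dee\tau .
\]
This step is the main obstacle, because the supremum is taken over all of $\Omega$, including $x_N$ near $0$, so the singularity at $\tau=t$ is not killed by the exponential. Instead of bounding $\partial^2\Gamma_1$ crudely, I would use its one-sided monotonic structure. Since $-\partial_{x_N}\Gamma_1(\eta,s)$ integrated in $\tau$ is explicit, as in \eqref{eq:3.8}, I would compare $\int_0^t -2\partial_{x_N}\Gamma_1(x_N+\tau/\delta,(t-\tau)/\epsilon)\,\dee\tau$ with the same integral without the shift. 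I would use the substitution from the proof of Theorem~\ref{Theorem:1.1}(3): both integrals are expressed through error-function type integrals, and their difference is $O(t/\delta)$ uniformly in $x_N\ge0$, or in any case controlled by $\frac{1}{\delta}\int_0^t\Gamma_1(x_N+\tau/\delta,(t-\tau)/\epsilon)\,\dee\tau\le C\delta^{-1}t^{1/2}$. Since $\Gamma_{N-1}$ has unit mass, this yields $O(k^{-1})$ uniformly on $\Omega\times(0,T)$.

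\emph{Assertion (2).} The difference is $\frac1\epsilon\int_{\partial\Omega}\tilde H\psi$. I would apply H\"older's inequality exactly as in \eqref{eq:5.23}--\eqref{eq:5.26}, with $k/\delta$ replaced by $1/\theta$ and the normal shift $\tau/\delta$ removed. The $\tau\in(t/2,t)$ part now has no $\tau/\delta$ help, but on $Q(R)$ either $x_N\ge R/2$, which gives exponential decay, or $t\ge R/2$. In the second case I use the bound $(t-\tau)^{-1}e^{-\epsilon x_N^2/(8(t-\tau))}$, which is integrable for $x_N>0$. I will need to check the case where $x_N$ is small and $t\ge R/2$ carefully. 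There I expect to use the $L^\infty$ norm of $\partial_{x_N}\Gamma_1$ in the normal variable combined with the decay $(t/\theta)^{-(N-1)/(2p)}$, splitting at $\tau=t/2$ as in \eqref{eq:5.25} to produce $f_p(\theta^{-1})$.

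\emph{Assertions (3)--(5).}
\begin{itemize}
\item For (3), I would write $\Gamma_{N-1}(\cdot,\frac{t-\tau}{\epsilon}+\frac\tau\theta)-\Gamma_{N-1}(\cdot,\frac{t-\tau}{\epsilon})=\int_0^{\tau/\theta}\partial_\xi\Gamma_{N-1}$. With $x_N>L$, the computation is identical to that for $J_1$ in Theorem~\ref{Theorem:5.3}, giving $O(\theta^{-1})$.
\item For (4), I would replace $\tilde H/\epsilon$ by its $\epsilon\to0$ limit, following the proof of Theorem~\ref{Theorem:4.1}. Substituting $\tau\mapsto t-\epsilon s$, the time of $\Psi$ becomes $t-\epsilon s$, and I compare with $\Psi(\cdot,t)$ using $\partial_t\Psi=O(t^{-1})$ for $t\ge T$. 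The tail $s>t/\epsilon$ is $O(\epsilon^{1/2})$ as in \eqref{eq:4.10}. The $G_0$ part is $O(\epsilon^{1/2})$ by \eqref{eq:2.6}. By Lemma~\ref{Lemma:4.1}(4) the limit kernel is $P$, which gives \eqref{eq:5.28}.
\item For (5), I would use \eqref{eq:2.5} together with the bound of the $\tilde H$ term by $C\epsilon^{-1}$ on $K\times(0,T)$, exactly as in Theorem~\ref{Theorem:5.3}(2).
\end{itemize}
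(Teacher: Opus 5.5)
Your argument for the boundary term in (1) has a genuine gap. Put $D(\tau):=\partial_{x_N}\Gamma_1(x_N+\frac{\tau}{\delta},\frac{t-\tau}{\epsilon})-\partial_{x_N}\Gamma_1(x_N,\frac{t-\tau}{\epsilon})$ and $\Psi_\tau(x'):=\int_{\partial\Omega}\Gamma_{N-1}(x'-y',\frac{t-\tau}{\epsilon}+\frac{\tau}{\theta})\psi(y)\,\dee\sigma(y)$. The quantity to estimate is then $-\frac{2}{\epsilon}\int_0^t\Psi_\tau(x')D(\tau)\,\dee\tau$.

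Your comparison of the two scalar integrals is correct. Indeed, $\int_0^tD\,\dee\tau=\frac{\epsilon}{\delta}\int_0^t\Gamma_1(x_N+\frac{\tau}{\delta},\frac{t-\tau}{\epsilon})\,\dee\tau$ by the computation behind Theorem~\ref{Theorem:1.1}~(3). However, this only handles $\psi\equiv\mathrm{const}$. Since $\Psi_\tau$ depends on $\tau$, the unit mass of $\Gamma_{N-1}$ only gives the bound $\frac{2}{\epsilon}\|\psi\|_{L^\infty(\partial\Omega)}\int_0^t|D(\tau)|\,\dee\tau$.

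The function $D$ has no sign: $-\partial_\eta\Gamma_1(\eta,s)$ increases for $\eta<\sqrt{2s}$ and decreases afterwards, so there is no "one-sided monotonic structure" to exploit. For $x_N\ll t/\delta$, the unshifted term carries mass $\approx\epsilon/2$ at the scale $t-\tau\sim\epsilon x_N^2$, where the shifted term is exponentially small. The shifted term carries mass $\approx\epsilon/2$ at the scale $t-\tau\sim\epsilon t^2/\delta^2$. Consequently $\int_0^t|D|\,\dee\tau\to\epsilon-\frac{\epsilon}{\delta}\int_0^t\Gamma_1(\frac{\tau}{\delta},\frac{t-\tau}{\epsilon})\,\dee\tau$ as $x_N\to0^+$. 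So your estimate is $O(1)$, not $O(k^{-1})$, near $\partial\Omega$.

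The missing idea is to use the regularity in $\tau$ of $\Gamma_{N-1}(x'-y',\frac{t-\tau}{\epsilon}+\frac{\tau}{\theta})$, whose time argument stays $\ge\min\{\epsilon^{-1},\theta^{-1}\}t$ by \eqref{eq:5.29}. The paper does this as follows:
\begin{itemize}
\item It writes $\partial_\xi\partial_{x_N}\Gamma_1(x_N+\xi,\frac{t-\tau}{\epsilon})=-\epsilon\,\partial_\tau\Gamma_1(x_N+\xi,\frac{t-\tau}{\epsilon})$.
\item It switches the domain of integration to $\{0<\xi<t/\delta,\ \delta\xi<\tau<t\}$.
\item It integrates by parts in $\tau$, so the derivative lands on $\Gamma_{N-1}$, costing $Ct^{-1}$ in $L^1$.
\item Only $\Gamma_1\le C(\frac{t-\tau}{\epsilon})^{-1/2}$ remains, and both resulting terms are $\le C\delta^{-1}t^{1/2}$.
\end{itemize}
Alternatively, you can salvage your own identity by freezing. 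Write $\Psi_\tau=\Psi_t+(\Psi_\tau-\Psi_t)$ and apply your identity to the first part. For the second part, use $|\Psi_\tau-\Psi_t|\le C\|\psi\|_{L^\infty(\partial\Omega)}(t-\tau)/t$ together with $|D(\tau)|\le C\frac{\tau}{\delta}(\frac{t-\tau}{\epsilon})^{-3/2}$; this remainder is also $\le C\delta^{-1}t^{1/2}$.

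A smaller issue arises in (2). You correctly note that, without the shift $\tau/\delta$, the range $\tau\in(t/2,t)$ is delicate for small $x_N$. But the tool you propose, the sup bound $|\partial_{x_N}\Gamma_1(\cdot,\sigma)|\le C\sigma^{-1}$, is not integrable at $\sigma=0$. Nor is $(t-\tau)^{-1}e^{-\epsilon x_N^2/(8(t-\tau))}$ integrable uniformly in $x_N$. What closes this, as in the paper, is the identity $\int_0^\infty|\partial_{x_N}\Gamma_1(x_N,\sigma)|\,\dee\sigma=\frac12$ for every $x_N>0$. Combine it with $\|\Gamma_{N-1}(\cdot,\frac{t-\tau}{\epsilon}+\frac{\tau}{\theta})\|_{L^{p/(p-1)}(\mathbb{R}^{N-1})}\le C(t/\theta)^{-\frac{N-1}{2p}}$ for $\tau\in(t/2,t)$.

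Parts (3)--(5) follow essentially the paper's route. One detail in (4): the time $t-\epsilon s$ runs down to $0$ on $(0,t/\epsilon)$, so you cannot use $\partial_t\Psi=O(t^{-1})$ only for $t\ge T$. Either split at $s=t/(2\epsilon)$, or work with the composite kernel $\Gamma_{N-1}(\cdot,s+\frac{t-\epsilon s}{\theta})$, whose time is $\ge t/\theta$ for $\epsilon<\theta$.
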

Note that, for any $t>0$, $\uLDP{\theta}(\cdot,t)$ is the solution to problem~\eqref{eq:LiD} 
with $\psi$ replaced by $\Psi(t)$, that is, 
\begin{equation}\label{eq:LiDLH}\tag{$\mbox{L}_{\rm D,\Psi}$}
    -\Delta u=0\quad\textrm{in}\quad\Omega,\quad u=\Psi(t)\quad\textrm{on}\quad\partial\Omega.
\end{equation}
{\bf Proof of Theorem~\ref{Theorem:5.5}.}
Let $\theta>0$ be fixed. Let $k>0$ and set $\delta=k\theta$.
We prove assertion~(1). 
Let $\epsilon>0$ be fixed.
It follows from \eqref{eq:1.8} and \eqref{eq:2.9} that
\[
	\begin{aligned}
 	& H(x,y,t)-\HHDiLH(x,y,t)
	\\
 	& =-2\int_0^t \Gamma_{N-1}\left(x'-y',\frac{t-\tau}{\epsilon}+\frac{\tau}{\theta}\right)
	\left\{\partial_{x_N}\Gamma_1\left(x_N+\frac{\tau}{\delta},\frac{t-\tau}{\epsilon}\right)
	-\partial_{x_N}\Gamma_1\left(x_N,\frac{t-\tau}{\epsilon}\right)\right\}\,\dee\tau
	\\
 	& =-2\int_0^t\int_0^{\tau/\delta}\Gamma_{N-1}\left(x'-y',\frac{t-\tau}{\epsilon}+\frac{\tau}{\theta}\right)
 	\partial_{x_N}\partial_\xi\Gamma_1\left(x_N+\xi,\frac{t-\tau}{\epsilon}\right)\,\dee\xi\,\dee\tau\\
    	&=2\epsilon\int_0^{t/\delta}
	\int_{\delta\xi}^t\Gamma_{N-1}\left(x'-y',\frac{t-\tau}{\epsilon}+\frac{\tau}{\theta}\right)
	\partial_\tau\Gamma_1\left(x_N+\xi,\frac{t-\tau}{\epsilon}\right)\,\dee\tau\,\dee\xi
	\\
     	&=-2\epsilon\int_0^{t/\delta}\Gamma_{N-1}
     	\left(x'-y',\frac{t-\delta\xi}{\epsilon}+\frac{\delta\xi}{\theta}\right)
     	\Gamma_1\left(x_N+\xi,\frac{t-\delta\xi}{\epsilon}\right)\dee\xi
     	\\
    	&\qquad
    	-2\epsilon\int_0^{t/\delta}\int_{\delta\xi}^{t}\partial_\tau\Gamma_{N-1}
    	\left(x'-y',\frac{t-\tau}{\epsilon}+\frac{\tau}{\theta}\right)
    	\Gamma_1\left(x_N+\xi,\frac{t-\tau}{\epsilon}\right)\,\dee\tau\,\dee\xi
	\end{aligned}
\]
for $(x,y,t)\in\overline{\Omega}\times\partial\Omega\times(0,\infty)$. 
Then, by property~($\Gamma$1) and \eqref{eq:2.2}, and the fact that
\begin{equation}
\label{eq:5.29}
	\frac{t-\tau}{\epsilon}+\frac{\tau}{\theta}\ge\min\{\epsilon^{-1},\theta^{-1}\}t,\quad \tau\in(0,t),
\end{equation}
for any $T>0$, 
we have 
\[
	\begin{aligned}
 	& \int_{\partial\Omega}|H(x,y,t)-\HHDiLH(x,y,t)|\,\dee\sigma(y)
	\\
	&\le C\epsilon\int_0^{t/\delta}\left(\frac{t-\delta\xi}{\epsilon}\right)^{-\frac{1}{2}}
	\exp\left(-\frac{\xi^2}{4(t-\delta\xi)}\right)\,\dee\xi\\
    	&\qquad
    	+C\epsilon\int_0^{t/\delta}\int_{\delta\xi}^t
    	\left(\frac{t-\tau}{\epsilon}+\frac{\tau}{\theta}\right)^{-1}
    	\left(\frac{t-\tau}{\epsilon}\right)^{-\frac{1}{2}}\exp\left(-\frac{\xi^2}{4(t-\tau)}\right)\,\dee\tau\,\dee\xi
    	\\
    	& \le C\int_0^{t/\delta}
    	\delta^{-\frac{1}{2}}\left(\frac{t}{\delta}-\xi\right)^{-\frac{1}{2}}\,\dee\xi
    	+Ct^{-1}\int_0^{t/\delta}\int_0^{t}(t-\tau)^{-\frac{1}{2}}\,\dee\tau\dee\xi
    	\le C\delta^{-1}t^\frac{1}{2}
	\le Ck^{-1}
	\end{aligned}
\]
for $x\in\Omega$ and $t\in(0,T)$.
This, together with \eqref{eq:1.10}, \eqref{eq:2.8}, and \eqref{eq:3.7}, yields assertion~(1).

We prove assertion~(2). 
Let $\epsilon>0$ be fixed and let $R>0$.
By \eqref{eq:2.2}, \eqref{eq:2.4}, \eqref{eq:2.8}, and \eqref{eq:2.9}, 
we have 
\[
	\begin{aligned}
 	& \left|u_{H_{D,\Psi}}^{\epsilon,\theta}(x,t)-\uHDi{\epsilon}{0}(x,t)\right|
	\\
 	& \le 2\|\psi\|_{L^p(\partial\Omega)}
	\left(\int_0^{t/2}+\int_{t/2}^t\right)\left\|\Gamma_{N-1}\left(x'-\cdot,\frac{t-\tau}{\epsilon}
	+\frac{\tau}{\theta}\right)\right\|_{L^{\frac{p}{p-1}}({\mathbb R}^{N-1})}
	\left|\partial_{x_N}\Gamma_1\left(x_N,\frac{t-\tau}{\epsilon}\right)\right|\,\dee\tau
	\\
 	& \le C\left(\frac{t}{2\epsilon}\right)^{-1}\exp\left(-\frac{\epsilon x_N^2}{4t}\right)
	\int_0^{t/2}\left(\frac{t-\tau}{\epsilon}+\frac{\tau}{\theta}\right)^{-\frac{N-1}{2p}}\,\dee\tau\\
	& \qquad
	+C\int_{t/2}^t\left(\frac{\tau}{\theta}\right)^{-\frac{N-1}{2p}}
	\left(\frac{t-\tau}{\epsilon}\right)^{-\frac{3}{2}}x_N
	\exp\left(-\frac{\epsilon x_N^2}{4(t-\tau)}\right)\,\dee\tau
	\\
 	& \le Ct^{-1}\exp\left(-\frac{\epsilon x_N^2}{4t}\right)
	\int_0^{t/2}\left(\frac{t-\tau}{\epsilon}+\frac{\tau}{\theta}\right)^{-\frac{N-1}{2p}}\,\dee\tau
 	+C\theta^{\frac{N-1}{2p}}t^{-\frac{N-1}{2p}}
	\int_0^\infty\zeta^{-\frac{3}{2}}\exp\left(-\frac{\epsilon}{4\zeta}\right)\,\dee\zeta
\end{aligned}
\]
for $(x,t)\in\Omega\times(0,\infty)$. 
Then we argue as in \eqref{eq:5.7} to obtain 
\[
	\left|\uHDP{\epsilon}{\theta}(x,t)-\uHDi{\epsilon}{0}(x,t)\right| \le Cf_p(\theta^{-1})
\]
for $(x,t)\in{Q(R)}$ and $\theta\in(0,1/2)$, which implies assertion~(2). 
Furthermore, 
since
\[
	\Gamma_{N-1}\left(x'-y',\frac{t-\tau}{\epsilon}+\frac{\tau}{\theta}\right)
	-\Gamma_{N-1}\left(x'-y',\frac{t-\tau}{\epsilon}\right)
	=\int_0^{\tau/\theta}\partial_\xi\Gamma_{N-1}\left(x'-y',\frac{t-\tau}{\epsilon}+\xi\right)\,\dee\xi,
\]
by property~($\Gamma$1) and \eqref{eq:2.2},
for any $L>0$ and $T>0$, 
we obtain
\[
	\begin{aligned}
 	& \int_{\partial\Omega}\int_0^t
	\left|\Gamma_{N-1}\left(x'-y',\frac{t-\tau}{\epsilon}+\frac{\tau}{\theta}\right)
	-\Gamma_{N-1}\left(x'-y',\frac{t-\tau}{\epsilon}\right)\right|
	\left|\partial_{x_N}\Gamma_1\left(x_N,\frac{t-\tau}{\epsilon}\right)\right|\,\dee\tau\,\dee\sigma(y)
	\\
 	& \le C\int_0^t\int_0^{\tau/\theta}
	\left(\frac{t-\tau}{\epsilon}+\xi\right)^{-1}\left(\frac{t-\tau}{\epsilon}\right)^{-1}
 	\exp\left(-\frac{\epsilon x_N^2}{8(t-\tau)}\right)\,\dee\xi\,\dee\tau\\
 	& \le \frac{Ct}{\theta}\int_0^t \left(\frac{t-\tau}{\epsilon}\right)^{-2}
 	\exp\left(-\frac{\epsilon L^2}{8(t-\tau)}\right)\le \frac{CT}{\theta}
	\end{aligned}
\]
for $x\in\Omega_L^c$ and $t\in(0,T)$. 
This yields assertion~(3). 

We prove assertion~(4). 
It follows from Lemma~\ref{Lemma:4.1}~(4), \eqref{eq:2.7}, \eqref{eq:2.11}, and \eqref{eq:5.28} that 
\begin{equation}
\label{eq:5.30}
	\begin{aligned}
 	\uLDP{\theta}(x,t)
    	&=\int_{\partial\Omega}P_N(x'-z',x_N)
	\left(\int_{\partial\Omega}\Gamma_{N-1}\left(z'-y',\frac{t}{\theta}\right)\psi(y)
	\,\dee\sigma(y)\right)\,\dee\sigma(z)
	\\
    	&=-2\int_{\partial\Omega}\int_0^\infty \Gamma_{N-1}(x'-z',\tau)\partial_{x_N}\Gamma_1(x_N,\tau)
	\\
    	& \qquad\times 
    	\left(\int_{\partial\Omega}\Gamma_{N-1}\left(z'-y',\frac{t}{\theta}\right)\psi(y)\,\dee \sigma(y)\right)
	\,\dee\tau\,\dee\sigma(z)
	\\
    	& =-2\int_{\partial\Omega}\int_0^\infty\Gamma_{N-1}\left(x'-y',\tau+\frac{t}{\theta}\right)
	\partial_{x_N}\Gamma_1(x_N,\tau)\psi(y)\,\dee\tau\,\dee\sigma(y),
    	\quad x\in\Omega.
  	\end{aligned}
\end{equation}
Then it follows from \eqref{eq:2.8} that
\begin{equation}
\label{eq:5.31}
  	\uHDP{\epsilon}{\theta}(x,t)-\uLDP{\theta}(x,t)
	=\uHDi{\epsilon}{0}(x,t)+\int_{\partial\Omega}\left(J_1(x,y,t)+J_2(x,y,t)\right)\psi(y)\,\dee\sigma(y)
\end{equation}
for $x\in\Omega$ and $t>0$,
where
\[
	\begin{aligned}
  	J_1(x,y,t)
	&:=-2\int_0^{t/\epsilon}\left(\Gamma_{N-1}\left(x'-y',\tau+\frac{t-\epsilon\tau}{\theta}\right)
	-\Gamma_{N-1}\left(x'-y',\tau+\frac{t}{\theta}\right)\right)\partial_{x_N}\Gamma_1(x_N,\tau)\,\dee\tau,
	\\
  	J_2(x,y,t)
	&=2\int_{t/\epsilon}^\infty \Gamma_{N-1}\left(x'-y',\tau+\frac{t}{\theta}\right)
	\partial_{x_N}\Gamma_1(x_N,\tau)\,\dee\tau.
	\end{aligned}
\]
Since
\[
	J_1(x,y,t)
	=-2\int_0^{t/\epsilon}\int_0^{\epsilon\tau}\partial_{\xi}\Gamma_{N-1}
	\left(x'-y',\tau+\frac{t-\xi}{\theta}\right)\partial_{x_N}\Gamma_1(x_N,\tau)\,\dee\xi\,\dee\tau,
\]
by \eqref{eq:2.2} and \eqref{eq:5.29}, we estimate
\[
	\begin{aligned}
  	\left|\int_{\partial\Omega}J_1(x,y,t)\psi(y)\,\dee\sigma(y)\right|
	&\le C\|\psi\|_{L^\infty(\partial\Omega)}
  	\int_0^{t/\epsilon}\int_0^{\epsilon\tau}\left(\tau+\frac{t-\xi}{\theta}\right)^{-1}
	x_N\tau^{-\frac{3}{2}}\exp\left(-\frac{x_N^2}{4\tau}\right)\,\dee\xi\,\dee\tau
	\\
  	&\le C\epsilon \int_0^{t/\epsilon}\left(\tau+\frac{t-\epsilon\tau}{\theta}\right)^{-1}
	x_N\tau^{-\frac{1}{2}}\exp\left(-\frac{x_N^2}{4\tau}\right)\,\dee\tau\\
  	&\le C\epsilon x_N^2t^{-1}\int_0^{t/(\epsilon x_N^2)}
	\eta^{-\frac{1}{2}}\exp\left(-\frac{1}{4\eta}\right)\,\dee\eta
	\le C\epsilon^{\frac{1}{2}}t^{-\frac{1}{2}}x_N
\end{aligned}
\]
for $\epsilon\in(0,\theta)$ and $(x,t)\in\Omega\times(0,\infty)$. 
Furthermore, similarly to the proof of Theorem~\ref{Theorem:5.3}~(1), we see that
\[
	\left|\int_{\partial\Omega}J_2(x,y,t)\psi(y)\,\dee\sigma(y)\right|
	\le C\epsilon^{\frac{1}{2}}t^{-\frac{1}{2}}x_N
\]
for $(x,t)\in\Omega\times(0,\infty)$. 
These, together with \eqref{eq:2.6} and \eqref{eq:5.31}, implies assertion~(4).

We finally prove assertion~(5). 
Let $K$ be compact in $\Omega$ and $T>0$. 
By \eqref{eq:2.5} and \eqref{eq:2.8}, we have
\[
	\begin{aligned}
 	& \limsup_{\epsilon\to\infty}\sup_{(x,t)\in K\times(0,T)}|\uHDP{\epsilon}{\theta}(x,t)-\phi(x)|
	\\
	& \le \limsup_{\epsilon\to\infty}\sup_{(x,t)\in K\times(0,T)}|\uHDP{\epsilon}{\theta}(x,t)-\uHDi{\epsilon}{0}(x,t)|
	+\limsup_{\epsilon\to\infty}\sup_{(x,t)\in K\times(0,T)}|\uHDi{\epsilon}{0}(x,t)-\phi(x)|
	\\
 	& \le \|\psi\|_{L^\infty(\partial\Omega)}\limsup_{\epsilon\to\infty}
	\sup_{(x,t)\in K\times(0,T)}
	\frac{1}{\epsilon}\int_{\partial\Omega}\int_0^t \tilde{H}(x,y,t)\,\dee\sigma(y).
	\end{aligned}
\]
It follows from property~($\Gamma$1) and \eqref{eq:2.9} that
\[
	\begin{aligned}
	\frac{1}{\epsilon}\int_{\partial\Omega}\int_0^t \tilde{H}(x,y,t)\,\dee\sigma(y)
 	& =-\frac{2}{\epsilon}\int_0^t\partial_{x_N}\Gamma_1\left(x_N,\frac{t-\tau}{\epsilon}\right)\,\dee\tau
	\\
 	& =-2\int_0^{t/\epsilon}\partial_{x_N}\Gamma_1\left(x_N,\tau\right)\,\dee\tau
	\le C\int_0^{t/\epsilon} \tau^{-1}\exp\left(-\frac{\tilde L^2}{8\tau}\right)\,\dee\tau
	\le C\frac{T}{\epsilon\tilde L^2}
	\end{aligned}
\]
for $(x,t)\in K\times(0,T)$, 
where $\tilde L:=\mbox{dist}(K,\partial\Omega)>0$. 
These imply assertion~(5).
Thus, the proof of Theorem~\ref{Theorem:5.5} is complete.
$\Box$
\begin{Remark}
\label{Remark:5.3}
{\rm (1)}
The convergence rate in {\rm Theorem~\ref{Theorem:5.5}\,(1)} is optimal.
Indeed, consider the case where $\phi=1$ on $\Omega$ and $\psi=0$ on $\partial\Omega$. 
Let $\theta>0$ be fixed. Let $k>0$ and set $\delta=k\theta$.
Since $\delta\to\infty$ as $k\to\infty$, 
applying the same argument as in Remark~{\rm\ref{Remark:5.2}~(1)}, 
we have
\[
	\begin{aligned}
 	& \delta\left(\uHDD{\epsilon}{\delta}{k}(x,t)-\uHDi{\epsilon}{\Psi}(x,t)\right)
 	=\delta\left(\uHDD{\epsilon}{\delta}{k}(x,t)-\uHDi{\epsilon}{0}(x,t)\right)\\
 	& =\int_\Omega H(x,y,t)\,\dee y
 	\to 2\int_0^t \left(\frac{4\pi(t-\tau)}{\epsilon}\right)^{-\frac{1}{2}}
	\exp\left(-\frac{\epsilon x_N^2}{4(t-\tau)}\right)\,\dee\tau>0
	\end{aligned}
\]
as $k\to\infty$ for $(x,t)\in\Omega\times(0,\infty)$. 
This shows that the convergence rate in Theorem~{\rm \ref{Theorem:5.5}~(1)} is optimal.
\medskip

\noindent
{\rm (2)} 
The convergence rate in Theorem~{\rm\ref{Theorem:5.5}\,(3)} is optimal.
To see this, consider the case where $\phi=0$ on $\Omega$ 
and $\psi=\chi_{\partial\Omega\setminus B'_2(0)}$ on $\partial\Omega$.
Then, by \eqref{eq:2.3}, \eqref{eq:2.8}, \eqref{eq:2.9}, and \eqref{eq:3.39}, 
we have
\[
	\begin{aligned}
 	& \theta(\uHDP{\epsilon}{\theta}(x,t)-\uHDi{\epsilon}{\psi}(x,t))
	\\
 	& =-\frac{2\theta}{\epsilon}\int_{\partial\Omega\setminus B'_2(0)}
	\int_0^t \left\{\Gamma_{N-1}\left(x'-y',\frac{t-\tau}{\epsilon}+\frac{\tau}{\theta}\right)
 	-\Gamma_{N-1}\left(x'-y',\frac{t-\tau}{\epsilon}\right)\right\}\\
  	& \qquad\quad
	\times \partial_{x_N}\Gamma_1\left(x_N,\frac{t-\tau}{\epsilon}\right)\,\dee\tau\,\dee\sigma(y)\\
  	& \to -\frac{2}{\epsilon}\int_{\partial\Omega\setminus B'_2(0)}
	\int_0^t \tau (\partial_t\Gamma_{N-1})\left(x'-y',\frac{t-\tau}{\epsilon}\right)
  	\partial_{x_N}\Gamma_1\left(x_N,\frac{t-\tau}{\epsilon}\right)\,\dee\tau\,\dee\sigma(y)>0
\end{aligned}
\]
as $\theta\to\infty$ for all $x\in B_1(0)$ and all sufficiently small $t>0$. 
This shows that the convergence rate in Theorem~{\rm\ref{Theorem:5.5}\,(3)} is optimal.
\medskip

\noindent
{\rm (3)} 
By \eqref{eq:2.6} and \eqref{eq:5.30},
the same argument as in Remark~{\rm\ref{Remark:5.2}~(2)} shows that
the convergence rate in Theorem~{\rm\ref{Theorem:5.5}~(4)} is optimal.
\medskip

\noindent
{\rm (4)} 
Let $\psi\in L^p(\partial\Omega)$, where $1\le p<\infty$. 
Then the diffusion limits of $\uLDP{\theta}(x,t)$ as $\theta\to\infty$ and $\theta\to 0^+$ 
follow from \eqref{eq:5.30}. 
Indeed, since
\[
	\begin{aligned}
    	& \uLDP{\theta}(x,t)-\uLDi{\psi}(x)
	\\
    	& =\int_{\partial\Omega}P_N(x'-z',x_N)\left(\int_{\partial\Omega}
	\Gamma_{N-1}\left(z'-y',\frac{t}{\theta}\right)\psi(y)\,\dee\sigma(y)-\psi(z')\right)\,\dee\sigma(z)
	\end{aligned}
\]
for $(x,t)\in\Omega\times(0,\infty)$ {\rm({\it see} \eqref{eq:5.30})}, 
for any $T>0$, 
by \eqref{eq:2.12} and property~{\rm($\Gamma$3)}, we obtain
\[
	\begin{aligned}
 	& \sup_{(x,t)\in\Omega_L^c\times(0,T)}\left|\uLDP{\theta}(x,t)-\uLDi{\psi}(x)\right|
	\\
 	& \le C L^{-(N-1)\left(1-\frac{1}{p}\right)}
	\left\|\int_{\partial\Omega}\Gamma_{N-1}\left(\cdot-y',\frac{t}{\theta}\right)\psi(y)
	\,\dee\sigma(y)-\psi\right\|_{L^p(\partial\Omega)}
	\to 0\quad \mbox{as} \quad \theta\to\infty
\end{aligned}
\]
uniformly for $t\in(0,T)$. 
In contrast, by \eqref{eq:2.2} and \eqref{eq:2.12}, 
we have
\[
	\begin{aligned}
 	\sup_{t\in(T,\infty)}\|\uLDP{\theta}(t)\|_{L^\infty(\Omega)}
    	& \le C\sup_{t\in(T,\infty)}\left\|\,\int_{\partial\Omega}
	\Gamma_{N-1}\left(\cdot-y',\frac{t}{\theta}\right)\psi(y)\,\dee\sigma(y)\,\right\|_{L^\infty(\partial\Omega)}
	\\
    	& \le C\left(\frac{T}{\theta}\right)^{-\frac{N-1}{2}\left(1-\frac{1}{p}\right)}\|\psi\|_{L^p(\partial\Omega)}
    	\le C\,\theta^{\frac{N-1}{2}\left(1-\frac{1}{p}\right)}\to 0
    	\quad\mbox{as}\quad\theta\to 0^+.
\end{aligned}
\]
 \end{Remark}
 
A diagram summarizing the diffusion limits established in this section, 
together with Theorem~\ref{Theorem:1.3}\,(3) and the results of Section~\ref{section:2}, is given below. 
We use the symbol $\Longrightarrow$ to denote convergence with the optimal rate.
\medskip
\[
	\xymatrix{ 
	& 0 & \eqref{eq:LiDLH}_\theta \ar[l]_{\theta\to 0^+} \ar[r]^{\theta\to\infty} & \eqref{eq:LiD}& \\
	& \phi \quad & \quad (\mbox{H}_{\rm D,\Psi})_{\epsilon,\theta} \ar@{=>}[u]_{\epsilon\to 0^+}  \ar[l]_{\epsilon\to \infty} 
	\ar@{=>}[ld]_{\theta\to\infty} \ar[r]^{\theta\to 0^+}\quad& \quad (\mbox{H}_{{\rm D,0}})_\epsilon \ar[rd]^{\epsilon\to\infty} \ar[r]^{\epsilon\to 0^+}\quad & \quad 0\\
	\eqref{eq:LiD} \quad & \quad (\mbox{H}_{{\rm D,\psi}})_\epsilon \ar@{=>}[l]_{\epsilon\to 0^+} \ar[ld]^{\epsilon\to \infty} \quad & \quad 
	\eqref{eq:HDD}_{\epsilon,\delta,k} \ar@{=>}[u]_{k\to\infty}^{\delta/k=\theta} \ar@{=>}[l]_{\delta\to \infty} \ar@{=>}[d]^{k\to 0^+} \ar@{=>}[r]^{\delta\to 0^+} \ar[ur]^{k\to\infty} \quad 
	& \quad \eqref{eq:HDN}_{\epsilon,k} \ar[d]^{k\to 0^+} \ar[u]_{k\to \infty} \ar[r]_{\epsilon\to\infty} \ar[rd]^{\epsilon\to 0^+} \quad & \quad \phi \\
	\phi &  & \eqref{eq:HD}_{\epsilon,\delta} \ar@{=>}[ul]^{\delta\to\infty} \ar@{=>}[r]^{\delta\to 0^+} \quad & 
	\quad \eqref{eq:HhN}_{\epsilon} \ar[r]_{\epsilon\to 0^+} \ar[d]^{\epsilon\to\infty} \quad & \quad 0\\
 	& & & \phi & 
	}
\]
\medskip

\noindent
{\bf Acknowledgment.}
K. I. and T. K. were supported in part by JSPS KAKENHI Grant Number 25H00591.
T. K. was supported in part by JSPS KAKENHI Grant Number 22KK0035.
S. K. was supported by JSPS KAKENHI Grant Number 23KJ0645.
\medskip

\noindent
{\bf Data availability.}
No data was generated or analyzed as part of the writing of this paper.
\medskip

\noindent
{\bf  Conflict of interest.}
The authors state no conflict of interest. 

%
%
%

\end{document}